\documentclass[10pt]{amsart}
\usepackage{amsmath}
\usepackage{amscd,amsthm,amssymb,amsfonts}
\usepackage{mathrsfs}
\usepackage{dsfont}
\usepackage{stmaryrd}
\usepackage{euscript}
\usepackage{expdlist}
\usepackage{enumerate}

\input xy
\xyoption{all}
\usepackage[OT2,T1]{fontenc}


\theoremstyle{plain}
\newtheorem{thm}{Theorem}[section]
\newtheorem{thmA}{Theorem}

\newtheorem*{thm*}{Theorem}

\newtheorem{lm}[thm]{Lemma}

\newtheorem*{corb*}{Corollary B}
\newtheorem{prop}[thm]{Proposition}
\newtheorem*{conj*}{Conjecture}



\theoremstyle{remark}
\newtheorem*{remark}{Remark}
\newtheorem*{thank}{Acknowledgments}

\theoremstyle{definition}
\newtheorem*{defn*}{Definition}
\newtheorem{Remark}[thm]{Remark}
\newtheorem{I_Remark*}{Remark}
\newtheorem{defn}[thm]{Definition}

\newcommand{\nc}{\newcommand}

\newcommand{\beq}{\begin{equation}}
\newcommand{\eeq}{\end{equation}}
\newcommand{\bpmx}{\begin{pmatrix}}
\newcommand{\epmx}{\end{pmatrix}}
\newcommand{\bbmx}{\begin{bmatrix}}
\newcommand{\ebmx}{\end{bmatrix}}
\newcommand{\wh}{\widehat}
\newcommand{\wtd}{\widetilde}

\newcommand{\beqcd}[1]{\begin{equation*}\label{#1}\tag{#1}}
\newcommand{\eeqcd}{\end{equation*}}

\numberwithin{equation}{section}

\def\parref#1{\ref{#1}}
\def\thmref#1{Theorem~\parref{#1}}

\def\propref#1{Proposition~\parref{#1}}
     \def\remref#1{Remark~\parref{#1}}
\def\secref#1{\S\parref{#1}}

\def\lmref#1{Lemma~\parref{#1}}
\def\subsecref#1{\S\parref{#1}}

\def\makeop#1{\expandafter\def\csname#1\endcsname
  {\mathop{\rm #1}\nolimits}\ignorespaces}

\makeop{Hom}   \makeop{End}   \makeop{Aut}   
\makeop{Pic} \makeop{Gal}       \makeop{Div} \makeop{Lie}
\makeop{PGL}   \makeop{Corr} \makeop{PSL} \makeop{sgn} \makeop{Spf}
 \makeop{Tr} \makeop{Nr} \makeop{Fr} \makeop{disc}
\makeop{Proj} \makeop{supp} \makeop{ker}   \makeop{Im} \makeop{dom}
\makeop{coker} \makeop{Stab} \makeop{SO} \makeop{SL} \makeop{SL}
\makeop{Cl}    \makeop{cond} \makeop{Br} \makeop{inv} \makeop{rank}
\makeop{id}    \makeop{Fil} \makeop{Frac}  \makeop{GL} \makeop{SU}
\makeop{Trd}   \makeop{Sp} \makeop{Tr}    \makeop{Trd} \makeop{Res}
\makeop{ind} \makeop{depth} \makeop{Tr} \makeop{st} \makeop{Ad}
\makeop{Int} \makeop{tr}    \makeop{Sym} \makeop{can} \makeop{SO}
\makeop{torsion} \makeop{GSp} \makeop{Tor}\makeop{Ker} \makeop{rec}
\makeop{Ind} \makeop{Coker}
 \makeop{vol} \makeop{Ext} \makeop{gr} \makeop{ad}
 \makeop{Gr}\makeop{corank} \makeop{Ann}
\makeop{Hol} 
\makeop{Fitt} \makeop{Mp} \makeop{CAP}


\def\Ord{{\mathrm{ord}}}

\DeclareMathOperator{\GO}{GO}
\DeclareMathOperator{\GSO}{GSO}

\DeclareMathAlphabet{\mathpzc}{OT1}{pzc}{m}{it}
\DeclareSymbolFont{cyrletters}{OT2}{wncyr}{m}{n}
\DeclareMathSymbol{\SHA}{\mathalpha}{cyrletters}{"58}

\def\makebb#1{\expandafter\def
  \csname bb#1\endcsname{{\mathbb{#1}}}\ignorespaces}
\def\makebf#1{\expandafter\def\csname bf#1\endcsname{{\bf
      #1}}\ignorespaces}
\def\makegr#1{\expandafter\def
  \csname gr#1\endcsname{{\mathfrak{#1}}}\ignorespaces}
\def\makescr#1{\expandafter\def
  \csname scr#1\endcsname{{\EuScript{#1}}}\ignorespaces}
\def\makecal#1{\expandafter\def\csname cal#1\endcsname{{\mathcal
      #1}}\ignorespaces}

\def\doLetters#1{#1A #1B #1C #1D #1E #1F #1G #1H #1I #1J #1K #1L #1M
                 #1N #1O #1P #1Q #1R #1S #1T #1U #1V #1W #1X #1Y #1Z}
\def\doletters#1{#1a #1b #1c #1d #1e #1f #1g #1h #1i #1j #1k #1l #1m
                 #1n #1o #1p #1q #1r #1s #1t #1u #1v #1w #1x #1y #1z}
\doLetters\makebb   \doLetters\makecal  \doLetters\makebf
\doLetters\makescr
\doletters\makebf   \doLetters\makegr   \doletters\makegr

\normalsize

\makeop{Ram} \makeop{Rep} \makeop{mass}

\makeop{Bl}
\def\abs#1{\left|#1\right|}

\def\Qp{\Q_p}
\def\Qbar{\ol{\Q}}
\def\Zbar{\ol{\Z}}

\def\Zp{\Z_p}



\def\rmN{{\mathrm N}}

\def\cA{{\mathcal A}}  

\def\cE{{\mathcal E}}
\def\cF{{\mathcal F}}  

\def\cL{{\mathcal L}}
\def\cH{{\mathcal H}}


\def\cR{{\mathcal R}}
\def\cO{\mathcal O}
\def\cS{{\mathcal S}}
\def\cf{{\mathcal f}}
\def\cW{{\mathcal W}}

\def\cP{{\mathcal P}}

\def\cN{\mathcal N}

\def\cU{\mathcal U}




\def\bfK{\mathbf K}

\def\bff{\mathbf f}





\def\bbI{\mathbb I}

\newcommand{\Z}{\mathbf Z}
\newcommand{\Q}{\mathbf Q}
\newcommand{\R}{\mathbf R}
\newcommand{\C}{\mathbf C}
\newcommand{\A}{\mathbf A}    

\def\bbH{\mathbb H}


\def\frakp{{\mathfrak p}}
\def\frakP{\mathfak P}
\def\frakq{\mathfrak q}

\def\frakl{\mathfrak l}
\def\frakP{\mathfrak P}

\def\frakH{{\mathfrak H}}

\def\frakX{\mathfrak X}

\def\frakN{\mathfrak N}


\def\bfone{{\mathbf 1}}








\def\etale{{\'{e}tale }}





\newcommand{\<}{\langle}   
\renewcommand{\>}{\rangle} 


\def\ot{\otimes}

\def\hookto{\hookrightarrow}

\def\ol{\overline}  \nc{\opp}{\mathrm{opp}} \nc{\ul}{\underline}


\newcommand{\pair}[2]{\< #1, #2\>}


\def\XYmatrix{\xymatrix@M=8pt} 
\def\ncmd{\newcommand}
\ncmd{\xysubset}[1][r]{\ar@<-2.5pt>@{^(-}[#1]\ar@<2.5pt>@{_(-}[#1]}
\ncmd{\XYmatrixc}[1]{\vcenter{\XYmatrix{#1}}}
\ncmd{\xyto}[1][r]{\ar@{->}[#1]}
\ncmd{\xyinj}[1][r]{\ar@{^(->}[#1]}
\ncmd{\xysurj}[1][r]{\ar@{->>}[#1]}
\ncmd{\xyline}[1][r]{\ar@{-}[#1]}
\ncmd{\xydotsto}[1][r]{\ar@{.>}[#1]}
\ncmd{\xydots}[1][r]{\ar@{.}[#1]}
\ncmd{\xyleadsto}[1][r]{\ar@{~>}[#1]}
\ncmd{\xyeq}[1][r]{\ar@{=}[#1]} \ncmd{\xyequal}[1][r]{\ar@{=}[#1]}
\ncmd{\xyequals}[1][r]{\ar@{=}[#1]}
\ncmd{\xymapsto}[1][r]{l\ar@{|->}[#1]}\ncmd{\xyimplies}[1][r]{\ar@{=>}[#1]}
\ncmd{\xyiso}{\ar[r]_-{\sim}}
\def\injxy{\ar@{^(->}}


\newcommand{\pMX}[4]{\begin{pmatrix}
{#1}& {#2}\\
{#3}&{#4}\end{pmatrix} }

 \newcommand{\pDII}[2]{\begin{pmatrix}{#1}&0
 \\0&{#2}\end{pmatrix}}


\newcommand{\seesaw}[4]{{#1}\ar@{-}[rd]\ar@{-}[d]&{#2}\ar@{-}[d]\\
{#3}\ar@{-}[ru]&{#4}}




\def\ie{i.e. }

\def\cf{\mbox{{\it cf.} }}











\def\uf{\varpi} 



\def\ndivides{\nmid}

\def\x{{\times}}

\def\al{\alpha}

\def\Lam{\Lambda}

\def\om{\omega}

\def\iso{\simeq}
\def\con{\equiv}
\def\bksl{\backslash}
\newcommand\stt[1]{\left\{#1\right\}}
\def\ep{\epsilon}

\def\lam{\lambda}

\def\disjoint{\sqcup}



\renewcommand\pmod[1]{\,(\mbox{mod }{#1})}

\renewcommand\Re{\text{Re}\,}


\bibliographystyle{jplain}
\usepackage{hyperref}

\DeclareMathAlphabet{\mathpzc}{OT1}{pzc}{m}{it}
\allowdisplaybreaks[1]

\setcounter{tocdepth}{1} \setcounter{secnumdepth}{3}



\theoremstyle{definition}
\numberwithin{equation}{section}

\newtheorem{lem}[thm]{Lemma}
\newtheorem{rem}[thm]{Remark}

\renewcommand{\bar}{\overline}
\def\rt{{}^{\rm t}}

\def\rmd{{\rm d}}
\def\newform{\bff^\circ}

\title[Bessel period and the non-vanishing of Yoshida lifts]{Bessel periods and the non-vanishing of Yoshida lifts modulo a prime}
\author[M.L. Hsieh]{Ming-Lun Hsieh}
\address[Hsieh]{ 
Institute of Mathematics, Academia Sinica~\\ Taipei 10617, Taiwan\and National Center for Theoretic Sciences\and  
Department of Mathematics, National Taiwan University~
}
\email{mlhsieh@math.sinica.edu.tw}

\author[K. Namikawa]{Kenichi Namikawa}
\date{\today}
\address[Namikawa]{ Department of Mathematics, School of Engineering, Tokyo Denki University~\\
5, Asahicho, Senju, Adachi city, Tokyo, 120-8551, Japan~
}
\email{namikawa@mail.dendai.ac.jp}

\thanks{M.-L. Hsieh is partially supported by MOST grant 103-2115-M-002-012-MY5. K. Namikawa was supported by JSPS
Grant-in-Aid for Research Activity Start-up Grant Number 1566157.}
\subjclass[2010]{11F27, 11F46}
\begin{document}
\begin{abstract}
We give an explicit construction of vector-valued Yoshida lifts and derive a formula of the Bessel periods of Yoshida lifts, by which we prove the non-vanishing modulo a prime of Yoshida lifts attached to a pair of elliptic modular newforms. As a consequence, we obtain a new proof of the non-vanishing of Yoshida lifts.
\end{abstract}

\maketitle
\tableofcontents

\section{Introduction}

 In \cite{yo80} and \cite{yo84}, Yoshida constructed certain explicit scalar-valued Siegel modular forms associated with a pair of elliptic modular newforms (case (I)) or a Hilbert modular newform (case (II)). These modular forms, known as Yoshida lifts, are theta lifts from ${\rm O}_{4,0}$ to ${\rm Sp}_4$. Yoshida conjectured the non-vanishing of these theta lifts under certain assumptions, and the non-vanishing of Yoshida lifts in case (I) was later proved by B\"{o}cherer and Schulze-Pillot in \cite{bsp91} and \cite{bsp97} (see also \cite{brooks01} for the representation technique). The purpose of this paper is to (i) extend Yoshida's construction to Siegel modular forms valued in $\Sym^{2k_2}(\C^{\oplus 2})\ot\det^{k_1-k_2+2}$ and calculate their Bessel periods; (ii) show the non-vanishing of Yoshida lifts modulo a prime $\ell$ under some mild conditions in case (I). In particular, we obtain a new proof of  the non-vanishing of Yoshida lifts in this case. 

To state our main results explicitly, we introduce some notation. Let $N^-$ be a square-free product of an odd number of primes and $(N^+_1,N^+_2)$ be a pair of positive integers prime to $N^-$. Put $(N_1,N_2):=(N^-N^+_1,N^-N^+_2)$. Let $(f_1,f_2)$ be a pair of elliptic modular newforms of level $(\Gamma_0(N_1),\Gamma_0(N_2))$ and weight $(2k_1+2,2k_2+2)$. Assume that $k_1\geq k_2\geq 0$.
Let $D$ be the definite quaternion algebra of absolute discriminant $N^-$. By the Jacquet-Langlands-Shimizu correspondence, to each $f_i$ ($i=1,2$), we can associate a vector-valued newform $\bff_i:D^\times\backslash D^\times_{\mathbf A} \to {\rm Sym}^{2k_i}(\C^{\oplus 2})\otimes{\rm det}^{-k_i}$ on $D^\times_{\mathbf A}$ unique up to scalar such that $\bff_i$ shares the same Hecke eigenvalues with $f_i$ at all $p\nmid N^-$. Thus $(\bff_1,\bff_2)$ gives rise to a vector-valued automorphic form $\bff_1\ot\bff_2$ on $\GSO(D)$. Combined with an appropriate (vector-valued) Bruhat-Schwartz  function $\varphi$ on $D^{\oplus 2}_{\mathbf A}$ (See \subsecref{SS:testfunctions}), one obtains Yoshida lift $\theta^\ast_{\bff_1\ot\bff_2}$ by global theta lifts from $\GSO(D)$ to $\Sp_4$. This Yoshida lift $\theta^\ast_{\bff_1\ot\bff_2}$ is a degree two holomorphic Siegel modular form of weight $\Sym^{2k_2}\ot\det^{k_1-k_2+2}$ and level $\Gamma_0^{(2)}(N)$ with $N={\rm l.c.m.}(N_1,N_2)$, and moreover, it is also a Hecke eigenform with the spin $L$-function $L(\theta_{\bff_1\ot\bff_2},s)=L(f_1,s-k_2)L(f_2,s-k_1)$. For each prime factor $p$ of ${\rm g.c.d.}(N_1,N_2)$, we denote by $\epsilon_p(f_1), \epsilon_p(f_2)\in \{\pm 1\}$ the Atkin-Lehner eigenvalues at $p$ on $f_1$ and $f_2$ respectively. We consider the following condition which is necessary for the non-vanishing of Yoshida lifts (\cf \cite[Lemma\,4.2]{yo84}). \beqcd{LR}\text{$\epsilon_p(f_1)=\epsilon_p(f_2)$ for every prime $p$ with ${\rm ord}_p(N_1)={\rm ord}_p(N_2)>0$.}\eeqcd

Let $\ell$ be a rational prime and fix a place $\lambda$ of $\bar{\Q}$ above $\ell$. Then it is known that one can normalize forms $\bff_1,\bff_2$ on $D_\A$ so that the values of $\bff_i$ on the finite part $\wh D^\x$ are $\lam$-integral and do not completely vanish modulo $\lam$ (See \subsecref{SS:normalizationY}). Our main result is about the non-vanishing of $\theta^*_{\bff_1\ot\bff_2}$ modulo $\lam$ attached to this normalized $\bff_1\ot\bff_2$.
\begin{thmA}[\thmref{T:nonvanYoshida}]\label{thma}
Assume that \eqref{LR} holds and the prime $\ell$ satisfies the following conditions
\begin{enumerate}
\item\label{tma(1)} $\ell > 2k_1$ and $\ell\nmid 2N$
 \item\label{tma(2)} the residual Galois representations $\bar{\rho}_{f_i,\ell}:{\rm Gal}(\bar{{\mathbf Q}}/{\mathbf Q})\to {\rm GL}_2(\bar{\mathbf F}_\ell)$ attached to $f_i$ are absolutely irreducible.
\end{enumerate}
 Then the Yoshida lift $\theta_{\bff_1\ot\bff_2}^*$ has $\lambda$-integral Fourier expansion, and there are infinitely many Fourier coefficients which are nonzero modulo $\lambda$.
\end{thmA}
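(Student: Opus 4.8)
The plan is to deduce the statement from the Bessel period formula proved in the preceding sections: one passes from Fourier coefficients to Bessel periods, evaluates the latter by that formula, and reduces the non‑vanishing modulo $\lam$ to a simultaneous non‑vanishing modulo $\lam$ of two Waldspurger‑type toric periods of $\bff_1$ and $\bff_2$ attached to a common imaginary quadratic field.

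\emph{Step 1 (from coefficients to Bessel periods).} For a fixed positive definite $S$ and a ring class character $\Lam$, the Bessel period $B_{S,\Lam}(\theta^*_{\bff_1\ot\bff_2})$ is a finite $\C$‑linear combination of the Fourier coefficients $a(T;\theta^*_{\bff_1\ot\bff_2})$ over half‑integral positive definite $T$ of a fixed discriminant; the $\lam$‑integrality of those coefficients follows from the normalization of $\bff_1,\bff_2$ and the explicit shape of the Yoshida lift. It therefore suffices to produce infinitely many data $(S,\Lam)$ with pairwise distinct underlying discriminants for which $B_{S,\Lam}(\theta^*_{\bff_1\ot\bff_2})$ is a $\lam$‑integral algebraic number that is nonzero modulo $\lam$.

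\emph{Step 2 (the Bessel formula).} Feeding $(S,\Lam)$ into the Bessel period formula, $S$ determines an imaginary quadratic field $E$ with an order $\cO\subset E$; the formula requires an embedding $E\hookrightarrow D$, which exists exactly when $S$ meets the local conditions at $p\mid N^-$, and it gives
\[
B_{S,\Lam}(\theta^*_{\bff_1\ot\bff_2})\;=\;C\cdot\Big(\prod_v Z_v\Big)\cdot P_E(\bff_1,\Lam_1)\,P_E(\bff_2,\Lam_2),
\]
where $C$ is an elementary constant controlled by $k_1$, the $Z_v$ are local zeta integrals, $\Lam_1\Lam_2$ is determined by $\Lam$, and $P_E(\bff_i,\Lam_i)=\sum_{[t]}\Lam_i([t])\,\bff_i(\iota(t))$ is a finite character sum over the class group of $\cO$, hence a $\lam$‑integral algebraic number since $\bff_i$ is $\lam$‑integral on $\wh D^\times$. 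The hypothesis $\ell>2k_1$ makes $C$ a $\lam$‑unit; $\ell\nmid 2N$ together with \eqref{LR} lets one choose $\Lam$ so that each local factor $Z_v$ at the finitely many bad places is a nonzero $\lam$‑unit — indeed \eqref{LR} is precisely the condition ensuring that the local sign at each $p$ with $\Ord_p(N_1)=\Ord_p(N_2)>0$ is compatible with a nonvanishing local period for $\bff_1$ and $\bff_2$ simultaneously.

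\emph{Step 3 (toric periods modulo $\lam$), and the main obstacle.} It remains to arrange $P_E(\bff_1,\Lam_1)\not\equiv0$ and $P_E(\bff_2,\Lam_2)\not\equiv0\pmod\lam$ for a common $(E,\Lam)$. The absolute irreducibility of $\bar\rho_{f_i,\ell}$ makes the reduction $\bar\bff_i$ non‑Eisenstein, which, using $\ell\nmid 2N$ and $\ell>2k_1$, puts one in position to apply equidistribution of Gross points on the definite quaternionic Shimura set modulo $\lam$: over a set of imaginary quadratic fields $E$ of positive density — cut out by Chebotarev conditions imposing the splitting behaviour at $N^-$ needed for the embedding, the Heegner conditions at $N^+_1N^+_2$, and coprimality to $\ell N$ — the Gross points attached to $\bff_i$ equidistribute mod $\lam$ in $\Pic(\cO)$, so $P_E(\bar\bff_i,\bar\Lam_i)$ is nonzero for a positive proportion of ring class characters of large conductor. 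Intersecting the positive‑density sets of fields for $i=1,2$ and applying the non‑vanishing to $\bff_1$ and then $\bff_2$ on the characters yields infinitely many $(E,\Lam)$ for which both toric periods are $\lam$‑units; by Step 2 the corresponding Bessel periods, and hence by Step 1 infinitely many Fourier coefficients of $\theta^*_{\bff_1\ot\bff_2}$ (a fresh packet for each $E$), are nonzero modulo $\lam$. I expect the hard part to be exactly this last step: securing the \emph{simultaneous} mod‑$\lam$ non‑vanishing of $P_E(\bff_1,\Lam_1)$ and $P_E(\bff_2,\Lam_2)$ for one and the same Bessel datum while keeping all the local constraints (the ramification set $N^-$, the Heegner conditions for $N^+_1,N^+_2$, the Atkin–Lehner condition \eqref{LR}, coprimality to $\ell$) mutually compatible and the auxiliary factors $C$ and $Z_v$ $\lam$‑integral units.
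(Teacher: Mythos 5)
Your Steps 1 and 2 capture the architecture of the paper's argument accurately: Fourier coefficients are packaged into normalized Bessel periods (the paper's Lemma~\ref{L:BesselFC} and equation~\eqref{E:BesselFC}), the Bessel period is evaluated via Proposition~\ref{Besselper} and Remark~\ref{Besselsplit} as a local constant $e(\bff,\phi)$ times $\Theta_{p^n}(\bff_1,\phi)\Theta_{p^n}(\bff_2,\phi^{-1})$, and hypothesis~\eqref{LR} together with sufficient ramification of $\phi$ is precisely what keeps $e(\bff,\phi)$ a $\lambda$-unit. The $\lambda$-integrality claim is proved exactly as you indicate, from the normalization of $\bff$ and the condition $\ell>2k_1$ on the pairing.

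The genuine gap is in Step~3, and you have correctly located it but not resolved it. You propose to vary the imaginary quadratic field $E$ over a positive-density Chebotarev-cut family and invoke equidistribution of Gross points to get, for each $\bff_i$, nonvanishing of $P_E(\bff_i,\Lam_i)\pmod\lambda$ for a \emph{positive proportion} of ring class characters. Positive proportion for $\bff_1$ plus positive proportion for $\bff_2$ does not force an overlap, since the characters $\Lam_1=\phi$ and $\Lam_2=\phi^{-1}$ are tied to a single $\phi$; nothing in your argument compels the two good sets of $\phi$ to intersect. The paper sidesteps this entirely: it \emph{fixes} a single $K$ and a single auxiliary prime $p\nmid \ell N\Delta_K$, and invokes \cite[Theorem~5.9]{ch15} to conclude $\Theta_{p^n}(\bff_i,\phi^{\pm1})\not\equiv0\pmod\lambda$ for \emph{all but finitely many} ring class characters $\phi$ of $p$-power conductor (under the absolute irreducibility of each $\bar\rho_{f_i,\ell}$). ``All but finitely many'' is closed under finite intersection, so simultaneous nonvanishing for $\bff_1$ and $\bff_2$ is automatic, and $e(\bff,\phi)$ is a $\lambda$-unit once $\phi$ is sufficiently ramified. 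The infinitude of nonzero coefficients then comes not from varying $E$, but from letting the conductor $p^n$ grow: each $n$ produces some $S'\in\Lam_2$ with $\det S'=p^{2n}\Delta_K/4$, so the resulting coefficients are pairwise distinct. In short, the decisive input you are missing is the Chida--Hsieh nonvanishing theorem for a fixed field and a fixed auxiliary prime; replacing it by a density argument over varying $E$ both complicates the setup and, as stated, leaves the simultaneity unproved.
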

It is well-known that the conditions (i) and (ii) only exclude finitely many primes $\ell$ (\cf\cite[Proposition\,3.1]{dimitrov05}), so we obtain immediately a new proof of the non-vanishing of Yoshida lifts in case (I) from \thmref{thma}.
\begin{corb*}Suppose that \eqref{LR} holds. Then the Yoshida lift $\theta_{\bff_1\ot\bff_2}^*$ is nonzero.
\end{corb*}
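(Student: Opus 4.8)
The plan is to deduce Corollary~B directly from \thmref{thma} by exhibiting a single prime $\ell$ to which Theorem~A applies. First I would observe that hypothesis~\ref{tma(1)} of Theorem~A --- namely $\ell>2k_1$ and $\ell\nmid 2N$ --- rules out only finitely many primes, since $k_1$ and $N$ are fixed once and for all. Hypothesis~\ref{tma(2)}, the absolute irreducibility of the mod-$\ell$ Galois representations $\bar\rho_{f_i,\ell}$, likewise fails for only finitely many $\ell$: this is classical for newforms of weight $\geq 2$, the only delicate case being that of CM forms, where $\bar\rho_{f_i,\ell}$ restricted to the relevant imaginary quadratic field splits as the sum of a mod-$\ell$ Hecke character and its conjugate, and the two summands are distinct once $\ell$ is large because the ratio $\psi_i^{1-\sigma}$ has infinite order. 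This finiteness is exactly the content of \cite[Proposition\,3.1]{dimitrov05}, which I would cite. Consequently the set of primes $\ell$ satisfying both \ref{tma(1)} and \ref{tma(2)} is cofinite in the set of all primes, hence nonempty; I would fix one such $\ell$ together with a place $\lambda$ of $\bar{\mathbf Q}$ above it.

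With $\ell$ fixed, I would pass to the $\lambda$-integral normalization of the forms $\bff_1,\bff_2$ recalled in \subsecref{SS:normalizationY}; since replacing any other choice of $\bff_i$ by this one only multiplies $\theta^*_{\bff_1\ot\bff_2}$ by a nonzero scalar, the vanishing or non-vanishing of the Yoshida lift is unaffected by this choice. Now Theorem~A applies to this normalized $\bff_1\ot\bff_2$ and gives that $\theta^*_{\bff_1\ot\bff_2}$ has $\lambda$-integral Fourier expansion with infinitely many --- in particular, at least one --- Fourier coefficients nonzero modulo $\lambda$. A Fourier coefficient that is nonzero modulo $\lambda$ is a fortiori nonzero, so $\theta^*_{\bff_1\ot\bff_2}\neq 0$, which is precisely the statement of Corollary~B.

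There is no genuine obstacle here beyond this bookkeeping; the only point that is not entirely trivial is that the hypotheses of Theorem~A are jointly satisfiable, i.e.\ that excluding the finitely many CM-related and level-related bad primes still leaves infinitely many admissible $\ell$, which is why the argument invokes the finiteness statement of \cite{dimitrov05} rather than producing $\ell$ by hand. The same reasoning in fact shows slightly more --- $\theta^*_{\bff_1\ot\bff_2}$ is nonzero modulo $\lambda$ for all but finitely many $\lambda$ --- but for the characteristic-zero conclusion of Corollary~B a single good $\ell$ is all that is needed.
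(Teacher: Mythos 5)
Your argument is exactly the one the paper gives: observe that conditions \ref{tma(1)} and \ref{tma(2)} of \thmref{thma} exclude only finitely many primes (citing \cite[Proposition\,3.1]{dimitrov05}), pick one admissible $\ell$, and conclude that a Fourier coefficient nonzero modulo $\lambda$ is a fortiori nonzero. The extra remarks on the CM case and on independence of normalization are correct but not needed; the substance matches the paper's proof.
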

When $N_1$ and $N_2$ are square-free, the nonvanishing of Yoshida lifts in case (I) has been proved in \cite{bsp97} by a completely different method.\\

Our main motivation for the study of the non-vanishing modulo $\lam$ of Yoshida lifts in case (I) originates from the applications to the Bloch-Kato conjecture for the special value of Rankin-Selberg $L$-functions $L(f_1\ot f_2,s)$ at $s=k_1+k_2+2$. For example, the authors in \cite{ak13} and \cite{bdsp12} use the method of \emph{Yoshida congruence} to construct non-trivial elements in the Bloch-Kato Selmer group associated with the four dimensional $\ell$-adic Galois representation $\rho_{f_1,\ell}\otimes \rho_{f_2,\ell}(-k_1-k_2-1)$. Roughly speaking, under some strong hypotheses these authors show that if $\ell$ divides the algebraic part of $L(f_1\ot f_2,k_1+k_2+2)$, then $\ell$ is a congruence prime for the Yoshida lift $\theta_{\bff_1\ot\bff_2^*}$, and hence for such primes, they can construct non-trivial congruences between Hecke eigen-systems of Yoshida lifts $\theta_{\bff_1\ot\bff_2}^*$ and stable forms on ${\rm GSp}_4$. This in turn gives rise to the congruences between their associated Galois representations, with which the authors can construct elements in the desired Bloch-Kato Selmer groups by adapting the method in \cite{brown07} for the case of Saito-Kurokawa lifts. The non-vanishing modulo $\lam$ of explicit Yoshida lifts serves as the first step in the method of Yoshida congruence (\cite[Corollary 9.2]{bdsp12} and \cite[Theorem 6.5]{ak13}). In \cite{ak13}, the authors use a result of Jia \cite{ji10} on the non-vanishing modulo $\ell$ of scalar-valued Yoshida lifts (\ie $k_2=0$), which is conditional under the assumption of Artin's primitive root conjecture. Our \thmref{thma} relaxes the assumption of Artin's conjecture and further extends Jia's result to vector-valued Yoshida lifts.

The proof of \thmref{thma} is based on an explicit Bessel period formula for Yoshida lifts (\propref{Besselper}), where we prove that the Bessel period of Yoshida lifts associated to a ring class character $\phi$ of an imaginary quadratic field $K$ is actually a product of a local constant $e(\bff_1\ot\bff_2,\phi)$ defined in \eqref{E:localconstant} and the toric period integrals attached to $\bff_1\ot\phi$ and $\bff_2\ot\phi^{-1}$ over $K$. On the other hand, it is shown that Bessel periods, after a suitable normalizaton, can be written as a linear combination of Fourier coefficients of Yoshida lifts $\theta_{\bff_1\ot\bff_2}^*$ (See \eqref{E:BesselFC} in \lmref{L:BesselFC}). Therefore, the non-vanishing of $\theta_{\bff_1\ot\bff_2}^*$ modulo $\lam$ boils down to the non-vanishing modulo $\lam$ of the local constant $e(\bff_1\ot\bff_2,\phi)$ and toric period integrals of $\bff_1\ot\phi$ and $\bff_2\ot\phi^{-1}$ for some ring class character $\phi$. Finally, we prove that if $\phi$ is sufficiently ramified, then the assumption \eqref{LR} implies the non-vanishing of the local constant $e(\bff_1\ot\bff_2,\phi)$, and the simultaneous non-vanishing modulo $\lam$ of these toric integral periods is a direct consequence of results of Masataka Chida and the first author in \cite{ch15}. 

So far we focus on Yoshida lifts in case (I). Let us make a remark on the non-vanishing modulo $\lam$ of Yoshida lifts in case (II), \ie theta lifts attached to Hilbert modular newforms $f$ over a real quadratic field $F$. We also give an explicit construction of Yoshida lifts in case (II) and show that their Bessel period formula attached to a ring class character $\phi$ of an imaginary quadratic field $K$ is a product of a local constant and a toric period integral attached to $f$ and the character $\phi\circ\rmN_{E/F}$ over $E:=FK$. However, it is not clear to us how to show the non-vanishing modulo $\lam$ of this toric period integral for sufficiently ramified $\phi$ despite that the main results in \cite{ch15} have been extended to Hilbert modular forms by P.-C. Hung \cite{hu14}. We hope to come back to this case in the future. 

This paper is organized as follows.
After introducing some basic notation in \subsecref{secnot}, 
we give the construction of Yoshida lifts in \secref{secYos}.
The particular choice of Bruhat-Schwartz function $\varphi$ is made in \subsecref{SS:testfunctions} and the Fourier coefficients of Yoshida lifts are given by \propref{P:FCYoshida}. We calculate the Bessel periods of Yoshida lifts in \secref{S:Bessel}, and the Bessel period formula is given in \propref{Besselper}. Finally, we prove the non-vanishing modulo $\lambda$ of Yoshida lifts in \secref{secFC}.

\section{Notation and definitions}\label{secnot}

\subsection{}
If $v$ is a place of $\Q$, we let $\Q_v$ be the completion of ${\mathbf Q}$ at $v$ and $|\cdot|_v$ be the normalized absolute value on $\Q_v$. Let $\wh\Z$ be the finite completion of $\Z$. If $M$ is an abelain group, let $M_v=M\ot_\Z\Q_v$ and $\wh M=M\ot_\Z\wh\Z$. Let $\A=\R\x\wh\Q$ be the ring of adeles of $\Q$ and $\A_f=\wh\Q$ be the finite adeles of $\Q$. If $F$ is an \etale algebra over $\Q$ (or $\Qp$), denote by ${\mathcal O}_F$ the ring of integers of $F$ and by $\Delta_F$ the absolute discriminant of $F$.

If $G$ is an algebraic group $G$ over $\Q$, denote by $Z_G$ the center of $G$. If $R$ is a $\Q$-algebra, denote by $G(R)$ the group of $R$-rational points of $G$. If $g\in G(\A)$, we write $g_f\in G(\A_f)$ for the finite component of $g$ and $g_v\in G(\Q_v)$ for its $v$-component. We sometimes write $G_\Q=G(\Q)$ and $G_{\mathbf A}=G(\A)$ for brevity. Define the quotient space $[G]$ by \[[G]:=G_\Q\backslash G_{\mathbf A}.\] If $\rmd g$ is a Haar measure on $G_\A$, then the quotient space $G_\Q\bksl G_\A$ is equipped with the quotient measure of $\rmd g$ by the counting measure of $G_\Q$, which we shall still denote by $\rmd g$ if no confusion arises.

For a set $S$, $\sharp(S)$ denotes the cardinality of $S$ and ${\mathbb I}_S$ denotes the characteristic function of $S$.

\subsection{Algebraic representation of ${\rm GL}_2$}\label{algrep}
Let $A$ be an $\Z$-algebra. We let $A[X,Y]_n$ denote the space of two variable homogeneous polynomial of degree $n$ over $A$.
Suppose $n!$ is invertible in $A$.
We define the perfect pairing $\langle\cdot ,\cdot \rangle_n:A[X,Y]_n\times A[X,Y]_n\to A$ by
\begin{align*}
  \langle X^iY^{n-i}, X^jY^{n-j} \rangle_n
 =\begin{cases}  (-1)^i\binom{n}{i}^{-1},  & {\rm if } j+i=n, \\
                       0, & {\rm if } i+j\neq n. \end{cases} 
\end{align*}
For $\kappa = (n+b,b)\in \Z^2$ with $n\in \Z_{\geq 0}$, let $\cL_\kappa(A)$ denote $\Sym^{n}(A^{\oplus 2})\ot\det^b$ the algebraic representation  of ${\rm GL}_2(A)$
with the highest weight $\kappa$. In other words, ${\mathcal L}_\kappa(A)=A[X,Y]_n$ with $\rho_\kappa:{\rm GL}_2(A) \to {\rm Aut}_A{\mathcal L}_\kappa(A)$given by
\begin{align*}
  \rho_\kappa(g) P(X,Y) = P((X,Y)g) \cdot (\det g)^b.  
\end{align*}
It is well-known that the pairing $\langle\cdot,\cdot\rangle_n$ on ${\mathcal L}_\kappa(A)$ satisfies
\begin{align*}
     \langle \rho_\kappa(g)v, \rho_\kappa(g)w \rangle_n
  = (\det g)^{n+2b} \cdot \langle v,w \rangle_n \quad (g\in {\rm GL}_2(A)).
\end{align*}
For each non-negative integer $k$, we put
\begin{align*}
  ({\mathcal W}_k(A), \tau_k) := (A[X,Y]_{2k}, \rho_{(k,-k)}). 
\end{align*}
Then $({\mathcal W}_k(A), \tau_k)$ is the algebraic representation of ${\rm PGL}_2(A) = {\rm GL}_2(A)/A^\times$,
and the pairing $\langle\cdot ,\cdot \rangle_{2k}$ is ${\rm GL}_2(A)$-equivariant.


\subsection{Siegel modular forms of degree two and Fourier expansions}\label{secBessel}
Let ${\rm GSp}_4$ be the algebraic group defined by
\begin{align*}
    {\rm GSp}_4 
 = \left\{  g\in {\rm GL}_4  | 
              g\begin{pmatrix} 0 & \bfone_2 \\ -\bfone_2 & 0 \end{pmatrix} {}^{\rm t} g 
              = \nu(g) \begin{pmatrix} 0 & \bfone_2 \\ -\bfone_2 & 0 \end{pmatrix}  
                 \right\}  
\end{align*}
with the similitude character $\nu: {\rm GSp}_4\to {\mathbb G}_m$. Here $\bfone_2$ denotes the $2$ by $2$ identity matrix. The Siegel upper half plane of degree 2 is defined by 
\[\frakH_2=\stt{Z\in {\rm M}_2(\C)\mid Z=\rt Z,\,\Im Z\text{ is positive definite}}.\]
Then $\frakH_2$ is equipped with an action of $\Sp_4(\R)$ given by $g\cdot Z=(AZ+B)(CZ+D)^{-1}$ for $g=\pMX{A}{B}{C}{D}$ and $Z\in\frakH_2$, and 
define the automorphy factor $J:\Sp_4(\R)\times{\mathfrak H}_2 \to {\rm GL}_2(\C)$ by
$J(g,Z) = CZ+D$.
Let ${\mathbf i}:=\sqrt{-1}\cdot \bfone_2\in {\mathfrak H}_2$. 
Let $\bfK_\infty$ be the maximal compact subgroup of ${\rm GSp}_4(\R)$ defined by
\begin{align*}
  \bfK_\infty = \{ g\in {\rm GSp}_4(\R) | g{}^{\rm t}g = \bfone_2 \}. 
\end{align*}
Let $\kappa=(a,b)\in \Z^2$ with $a-b\in 2\Z_{\geq 0}$. For a positive integer $N$, let
\[U^{(2)}_0(N)=\stt{g=\pMX{A}{B}{C}{D}\in \GSp_4(\wh\Z)\mid A,B,C,D\in{\rm M}_2(\wh\Z),\,C\con 0\pmod{N}}.\]
be an open-compact subgroup of ${\rm GSp}_4({\mathbf A}_f)$. For each quadratic character $\chi:\Q^\x\bksl \A^\x\to\stt{\pm 1}$, denote by ${\mathcal A}_\kappa({\rm GSp}_4({\mathbf A}),N,\chi)$ the space of adelic Siegel modular forms of weight $\kappa$, level $N$ and type $\chi$, which
consists of smooth functions 
$\cF:{\rm GSp}_4({\mathbf A})\to {\mathcal L}_\kappa(\C)$ such that
\begin{align*}
   \cF( \gamma g k_\infty u z )
 = &\rho_\kappa(J(k_\infty,{\mathbf i})^{-1} ) \cF(g) \chi(\det D), \\
( \gamma\in\GSp_4(\Q),\,k_\infty\in \bfK_\infty,\,&
 u=\pMX{A}{B}{C}{D}\in U^{(2)}_0(N),\,z\in\A^\x).
\end{align*}

\subsubsection*{Fourier coefficients of $\cF$}
Denote by $\cH_2$ the group of $2$ by $2$ symmetric matrices. 
Let $U$ be a unipotent subgroup of $\GSp_4$ defined by 
\begin{align*}
 U = \left\{ u(X) = \begin{pmatrix} \bfone_2 & X \\ 0 & \bfone_2  \end{pmatrix} 
                 \mid X\in \cH_2   \right\}.
\end{align*}
Let $\psi=\prod_v\psi_v:\A/\Q\to \C^\times$ be the additive character 
with $\psi(x_\infty)=\exp(2\pi\sqrt{-1}x_\infty)$ for $x_\infty\in \R=\Q_\infty$.
For each $S\in \cH_2(\Q)$, 
let $\psi_S: U_\Q\backslash U_{\mathbf A} \to \C^\times$ be the additive character defined by
$\psi_S(u(X))=\psi({\rm Tr}(-SX))$.
The adelic $S$-th Fourier coefficient ${\bfW}_{\cF,S}:{\rm GSp}_4({\mathbf A}) \to {\mathcal L}_\kappa(\C)$ is defined by
\begin{align*}
  {\bfW}_{\cF,S} (g) =\int_{U_\Q\backslash U_{\mathbf A}} \cF(ug)\psi_S(u) \rmd u,  
\end{align*}
where $\rmd u$ is the Haar measure with $\vol(U_\Q
\bksl U_\A,\rmd u)=1$. Then $\cF$ has the Fourier expansion
\beq\label{E:FC} \cF(g) = \sum_{S\in \cH_2(
\Q) } {\bfW}_{\cF,S}(g).  
\eeq
Note that $\bfW_{\cF,S}(ug)=\psi_S(u)\bfW_{\cF,S}(g)$ and \beq\label{E:FC3}\bfW_{\cF,S}(\pDII{\xi}{\nu{}^{\rm t}\xi^{-1}}g)=\bfW_{\cF,\nu\rt\xi S\xi}(g)\eeq for $\xi\in\GL_2(\Q)$ and $\nu\in \Q^\x$. 

\section{Yoshida lifts}\label{secYos}
\subsection{Orthogonal groups}\label{secYos1}

Let $D_0$ be a definite quaternion algebra over $\Q$ of discriminant $N^-$ and let $F$ be a quadratic \etale algebra over $\Q$. Let $D = D_0\otimes_\Q F$. We assume that every place dividing $\infty N^-$ is split in $F$. It follows that $F$ is either $\Q\oplus \Q$ or a real quadratic field over $\Q$, and $D$ is precisely ramified at $\infty N^-$. Denote by $x\mapsto x^\ast$ the main involution of $D_0$ and by $x\mapsto \ol{x}$ the non-trivial automorphism of $F/\Q$, which are extended to automorphisms of $D$ naturally. We define the four dimensional quadratic space $(V,{\rm n})$ over $\Q$ by \[
  V = \left\{ x\in D : \ol{x}^* = x   \right\},\,{\rm n}(x)=xx^*.\]
  Let $H$ be the algebraic group over $\Q$ given by 
\[H (\Q)=D^\times \times_{F^\times} \Q^\times = D^\times \times \Q^\times /\{(a, {\rm N}_{F/\Q}(a)): a\in F^\times \}. \]Then $H$ acts on $V$ via $\varrho: H \to\Aut V$ given by
\[   \varrho(a,\alpha)(x) = \alpha^{-1} a x \ol{a}^*\quad (x, \in V, (a,\alpha)\in B^\times).  \label{varrho}   
\]
This induces an identification $\varrho: H\iso \GSO(V)$ with the similitude map given by \[\nu(\rho(a,\alpha))=\al^{-2}{\rm N}_{F/\Q}(aa^*). \]
For $a\in D_\A^\x$, we write $\varrho(a)=\varrho(a,1)$. Put
\[H^{(1)}=\stt{h\in H\mid \nu(\varrho(h))=1}\iso \SO(V).\]
\begin{remark}
If $v=w\ol{w}$ is a place split in $F$, then $F=\Q_v e_w\oplus \Q_v e_{\ol{w}}$, where $e_w$ and $e_{\ol{w}}$ are idempotents corresponding to $w$ and $\ol{w}$ respectively, and each place $w$ lying above $v$ induces the isomorphisms \beq\label{E:splittrivialization}\begin{aligned}i_w:D_{0,v}^\x\x D_{0,v}^\x/\Q_v^\x\iso H(\Q_v),&\quad (a,d)\mapsto (ae_w+de_{\ol{w}},{\rm n}(d));\\ j_w: D_{0,v}\iso V_v,&\quad  x\mapsto xe_w+x^*e_{\ol{w}}.\end{aligned}\eeq
By definition, $\varrho(i_w(a,d))j_w(x)=j_w(axd^{-1})$.
\end{remark}
\def\bfx{x}

\subsection{Notation for quaternion algebras}\label{SS:DATA}
We will fix the following data throughout this paper. 
For any ring $A$, the main involution $*$ on ${\rm M_2}(A)$ is given by \[\pMX{a}{b}{c}{d}^*=\pMX{d}{-b}{-c}{a}.\]
Fix an isomorphism $\Phi=\prod_{p\ndivides N^-\infty}\Phi_p: \prod_{p\ndivides N^-}'{\rm M}_2(\Qp)\iso \prod'_{p\ndivides N^-}D_0\ot\Qp$ once and for all. Let $\cO_{D_0}$ be the maximal order of $D_0$ such that $\cO_{D_0}\ot\Zp=\Phi_p({\rm M}_2(\Zp))$ for all $p\ndivides N^-$ and let $\cO_D:=\cO_{D_0}\ot_\Z\cO_F$ be a maximal order of $D$.

Let $N^+$ be a positive integer with$(N^+,\Delta_FN^-)=1$ and let $R$ be the standard Eichler orders of $D$ of level $N^+\cO_F$ contained in $\cO_D$. Then the algebraic group $H$ and the quadratic space $V$ can be endowed with an integral structure induced by $R$ as follows. Define the lattice \[V(\Z):=V\cap \wh R\,;\quad V(A):=V(\Z)\ot_{\Z} A\]for any ring $A$.
Define an open-compact subgroups $H(\wh\Z)$ and $\cU$ by 
\beq\label{E:opcpt} 
\begin{aligned}
H(\wh\Z)=&\prod_p H(\Zp),\quad H(\Zp):=R_p^\x\x_{\cO_{F_p}^\x}\Zp^\x;\\
\cU=&H^{(1)}(\A_f)\cap H(\wh\Z)=\stt{(h,\alpha)\in H^{(1)}(\A_f)\mid h\in \wh R^\x}.
\end{aligned}\eeq

Define the quaternion algebra $\bbH_\Q$ over $\Q$ by
\begin{align*}
  \bbH_\Q = \stt{\pMX{z}{w}{-\ol{w}}{\ol{z}}\mid z,w\in\Q(\sqrt{-1})}.\end{align*} 
The main involution $\ast:{\mathbb H}_\Q\to {\mathbb H}_\Q$ is given by $x\mapsto {}^{\rm t}\bar{x}$ and let $\cO_{\bbH_\Q}$ be the maximal order defined by 
\[\cO_{\bbH_\Q}=\Z+\Z i+\Z j+\Z \frac{1+i+j+ij}{2},\]
where $i=\pDII{\sqrt{-1}}{-\sqrt{-1}}$ and $j=\pMX{0}{\sqrt{-1}}{\sqrt{-1}}{0}$. Then the Hamilton quaternion algebra $\bbH:=\bbH_\Q\ot\R$.
Let $\ell\ndivides 2N^-$ be a prime. For the later study on the $\ell$-integrality of Yoshida lifts in \secref{secFC}, we take $\Phi_\infty: \bbH \iso D_{0,\infty}$ to be an isomorphism compatible with this prime $\ell$ in the following manner. Choose a real quadratic field $F_1$ such that $\ell$ is split in $F_1$ and every prime factor of $2N^-$ is inert in $F_1$. Fix an embedding $F_1\hookto \Q_\ell$. Then there is an isomorphism $\Phi_{F_1}: \bbH_{\Q}\ot F_1\iso D_0\ot F_1$ such that $\Phi_{F_1}(\cO_{\bbH_\Q}\ot\Z_\ell)=\cO_{D_0}\ot\Z_\ell$, and the isomorphism $\Phi_\infty: \bbH \iso D_{0,\infty}$ is obtained by extending $\Phi_{F_1}$ by scalars. 

Let $F':=FF_1(\sqrt{-1})$. For any $F'$-algebra $L$, $\Phi_{F_1}^{-1}$ induces an embedding $D_0\hookto \bbH_\Q\ot_\Q L\hookto {\rm M}_2(L)$, which in turn induces $D^\x\hookto \GL_2(L\ot F)=\GL_2(L)\x\GL_2(L)$. Therefore, for each pair of non-negative integers $(k_1,k_2)$, we can regard $(\tau_{k_1}\ot \tau_{k_2}, \cW_{k_1}\ot\cW_{k_2})$ in \subsecref{algrep} as an algebraic representation of $D^\x/F^\x$ ($=H/Z_H)$ over $L$.
\subsection{Automorphic forms on $H(\A)$}\label{subsec:fromsonH}
Let $\ul{k}=(k_1,k_2)$ be a pair of positive integers with $k_1\geq k_2$ and let $(\tau_{\ul{k}},\cW_{\ul{k}}):=(\tau_{k_1}\otimes\tau_{k_2},{\mathcal W}_{k_1}\otimes {\mathcal W}_{k_2})$ be an algebraic representation of $D^\x$. 
For any open-compact subgroup $U\subset \wh \cO_D^\x$, denote by ${\mathcal A}_{\ul{k}}(D^\times_{\mathbf A},U)$ the space of modular forms on $D_\A^\x$ of weight $\ul{k}$, consisting of functions
$\bff:D^\times_{\mathbf A}\to {\mathcal W}_{\ul{k}}(\C)$
such that
\begin{align*}
 &\bff(z\gamma h u) = \tau_{\ul{k}}(h^{-1}_\infty) \bff(h_f),\\
 (h=&(h_\infty,h_f)\in D^\times_{\mathbf A},\, (z,\gamma, u)\in F^\times_{\mathbf A}\times D^\times \times U).    
\end{align*}
Hereafter, we shall view $\bff$ as an automorphic form on $Z_{H}(\A)\bksl H(\A)$ by the rule $\bff(a,\alpha):=\bff(a)$.

Let $\frakN^+\mid N^+$ be an ideal $\cO_F$ and let $\frakN=\frakN^+N^-$. Let $R_{\frakN^+}$ be the Eichler of level $\frakN^+$ contained in $\cO_D$ (so $R\subset R_{\frakN^+}$). Let $\cA(D_\A^\x)$ be the space of automorphic forms on $D_\A^\x$. Then there is a natural identification \[\cA_{\ul{k}}(D_\A^\x,\wh R_{\frakN^+}^\x)=\Hom_{D_\infty^\x}(\cW_{\ul{k}}(\C),\cA(D_\A^\x)^{\wh R_{\frakN^+}^\x}).\]
Let $f^{\rm new}$ be a newform on $\PGL_2(F_\A)$ of weight $2\ul{k}+2=(2k_1+2,2k_2+2)$ and level $\frakN$. Namely, $f^{\rm new}$ is a pair of elliptic modular newforms $(f_1,f_2)$ of level $(\Gamma_0(N_1^+N^-)$, $\Gamma_0(N_2^+N^-))$ 
and weight $(2k_1+2, 2k_2+2)$ if $F=\Q\oplus \Q$ and $\frakN^+=(N_1^+,N_2^+)$, while $f^{\rm new}$ is a Hilbert modular newform of level $\Gamma_0(\frakN)$ and weight $2\ul{k}+2$ if $F$ is a real quadratic field. Let $\pi$ be the automorphic cuspidal representation of $\PGL_2(F_\A)$ attached to $f^{\rm new}$ and let $\pi^D\subset\cA(D_\A^\x)$ be the Jacquet-Langlands transfer of $\pi$, which is an automorphic representation of $D_\A^\x$. Then the subspace $\cA_{\ul{k}}(D_\A^\x,\wh R_{\frakN^+}^\x)[\pi^D]:=\Hom_{D_\infty}(\cW_{\ul{k}}(\C),(\pi^D)^{\wh R_{\frakN^+}^\x})$ has one-dimensional by the theory of newforms. Any generator $\newform$ of this space $\cA_{\ul{k}}(D_\A^\x,\wh R_{\frakN^+}^\x)[\pi^D]$ shall be called the newform associated with $f^{\rm new}$.
\subsection{Weil representation on ${\rm O}(V)\times{\rm Sp}_4$} \label{secWeil}
Let $(\cdot, \cdot): V\times V \to \Q$ be the bilinear form defined by
$(x,y) = {\rm n}(x+y) - {\rm n}(x) - {\rm n}(y)$. 
Denote by ${\rm GO}(V)$ the orthogonal similitude group with the similitude morphism 
$\nu:{\rm GO}(V) \to {\mathbb G}_m$.
Let ${\mathbf X} = V\oplus V$. For $v$ a place of $\Q$, let $V_v=V\otimes_\Q\Q_v$ and ${\mathbf X}_v={\mathbf X}\otimes_\Q\Q_v$.  Note that the quadratic character $\chi_{F_v/\Q_v}$ attached to $F_v/\Q_v$ is the quadratic character attached to $V_v$. Denote by ${\mathcal S}({\mathbf X}_v)$ the space of $\C$-valued Bruhat-Schwartz functions on ${\mathbf X}_v$. For each $\bfx=(x_1,x_2)\in {\mathbf X}_v=V_v\oplus V_v$, we put
\begin{align*}
  S_\bfx= \begin{pmatrix} {\rm n}(x_1) & \frac{1}{2}(x_1,x_2)  \\
                                \frac{1}{2}(x_1,x_2) & {\rm n}(x_2)   \end{pmatrix}. 
\end{align*}
Let $\omega_{V_v}:{\rm Sp}_4(\Q_v)\to {\rm Aut}_\C{\mathcal S}({\mathbf X}_v)$ be the Schr\"odinger realization of the Weil representation. For every $\varphi\in {\mathcal S}({\mathbf X}_v)$, we have
\begin{align*}
\omega_{V_v} \left(  \begin{pmatrix} A & 0 \\ 0 & {}^{\rm t}A^{-1} \end{pmatrix} \right) \varphi(x)
 = & \chi_{F_v/\Q_v}(\det A ) |\det A|^2_p\cdot \varphi(xA),  \\
\omega_{V_v} \left(  \begin{pmatrix} \bfone_2 & B \\ 0 & \bfone_2 \end{pmatrix} \right) \varphi(x)
 = & \psi_v({\rm Tr}(S_xB))\cdot \varphi(x),  \\
\omega_{V_v} \left(  \begin{pmatrix} 0 & \bfone_2 \\ -\bfone_2 & 0 \end{pmatrix} \right) \varphi(x)
 = & \gamma^2_{V_v} \cdot \widehat{\varphi}(x),  
\end{align*}
where $\gamma_{V_v}=\gamma(\psi_v\circ{\rm n})$ is the Weil index attached to the second degree character $\psi_v\circ{\rm n}:V_v\to\C^\x$ (\cf\cite[Theorem A.1]{rao93}), and $\widehat{\varphi}\in\cS(\mathbf X_v)$ is the Fourier transform of $\varphi$
with respect to the self-dual Haar measure $\rmd\mu$ on $V_v\oplus V_v$ defined by
\begin{align*}
 \widehat{\varphi}(x) := \int_{{\mathbf X}_v} \varphi(y) \psi_v((x,y))\rmd\mu(y).  
\end{align*}
Let $\cR(\GO(V)\x \GSp_4)$ be the $R$-group
\[\cR(\GO(V)\x \GSp_4)=\stt{(h,g)\in \GO(V)\x\GSp_4\mid \nu(h)=\nu(g)}.\]
Then the Weil representation can be extended to the $R$-group by
\begin{align*}
\omega_v:&\cR({\rm GO}(V_v)\times {\rm GSp}_4(\Q_v))\to {\rm Aut}_\C{\mathcal S}({\mathbf X}_v),\\
    \omega_v(h,g)\varphi(x)
  = &|\nu(h)|^{-2}_v(\omega_{V_v}(g_1)\varphi)(h^{-1}x)
\quad (g_1 = \begin{pmatrix} \bfone_2 & 0 \\ 0 & \nu(g)^{-1}\bfone_2 \end{pmatrix}g). 
\end{align*}
Let ${\mathcal S}({\mathbf X}_{\mathbf A}) = \otimes_v'{\mathcal S}({\mathbf X}_v)=\cS(\bfX_\infty)\ot\cS(\wh\bfX)$ ($\wh\bfX=\bfX\ot\wh\Z$). Define
$\omega_V = \otimes_v\omega_{V_v}: {\rm Sp}_4({\mathbf A}) \to {\rm Aut}_\C{\mathcal S}({\mathbf X}_{\mathbf A})$ 
and 
$\omega=\otimes_v\omega_v: \cR({\rm GO}(V)_{\mathbf A}\times {\rm GSp}_4({\mathbf A}))
\to {\rm Aut}_\C{\mathcal S}({\mathbf X}_{\mathbf A})$.

\subsection{Theta lifts}
Let $\bff\in\cA_{\ul{k}}(D^\x_\A,\wh R^\x)$. Define the pairing on $\cW_{\ul{k}}(\C)$ by 
$\pair{\cdot}{\cdot}_{2\ul{k}}=\langle\cdot,\cdot\rangle_{2k_1}\otimes\langle\cdot,\cdot\rangle_{2k_2}$, 
where $\langle\cdot,\cdot\rangle_{2k_i} (i=1,2)$ is the pairing introduced in Section \ref{algrep}.  Let \[\kappa=(k_1+k_2+2,k_1-k_2+2).\] For each vector-valued Bruhat-Schwartz function $\varphi\in{\mathcal S}({\mathbf X}_{\mathbf A})\otimes {\mathcal W}_{\ul{k}}(\C)\otimes{\mathcal L}_\kappa(\C)$, 
define the theta kernel $\theta(-,-;\varphi): \cR({\rm GO}(V)_{\mathbf A}\times {\rm GSp}_4({\mathbf A}) ) \to {\mathcal W}_{\ul{k}}(\C)\otimes{\mathcal L}_\kappa(\C)$ by
\begin{align*}
  \theta(h,g;\varphi) = \sum_{x\in{\mathbf X}} \omega(h,g)\varphi(x).
\end{align*}
Let $\GSp^+_4$ be the group of elements $g\in\GSp_4$ with $\nu(g)\in \nu(\GO(V))$. Define the theta lift $\theta(-; {\mathbf f},\varphi): \GSp_4^+(\Q)\bksl {\rm GSp}_4^+({\mathbf A})\to {\mathcal L}_\kappa(\C)$ by 
\begin{align*}
  \theta(g;{\mathbf f},\varphi) = \int_{[H^{(1)}]} \pair{\theta( hh^\prime, g;\varphi)}{{\mathbf f}(hh^\prime)}_{2\ul{k}}{\rm d}h \quad (\nu(h^\prime) = \nu(g)).
\end{align*}
Here $\rmd h:=\rmd h_\infty \rmd h_f$ is the Haar measure of $H^{(1)}(\A)$ normalized so that $\rmd h_\infty$ and $\rmd h_f$ are the Haar measures of $H^{(1)}(\R)$ and $H^{(1)}(\A_f)$ with 
  $\vol(H^{(1)}(\R),\rmd h_\infty)=\vol(H^{(1)}(\A_f)\cap \cU,\rmd h_f)=1$. Here $\cU$ is the group defined in \eqref{E:opcpt}. We extend uniquely $\theta(-;\bff,\varphi)$ to a function on $\GSp_4(\Q)\bksl \GSp_4(\A)$ by defining $\theta(g,\bff,\varphi)=0$ for $g\not\in\GSp_4(\Q)\GSp_4^+(\A)$.
  \subsection{The test functions}\label{SS:testfunctions}
Let $N=N^+N^-$ and $N_F={\rm l.c.m.}(N,\Delta_F)$. We choose a distinguished Bruhat-Schwartz function $\varphi=\varphi_\infty\ot\varphi_f\in\cS(\bfX_\A)\ot {\mathcal W}_{\underline{k}}(\C)\otimes {\mathcal L}_\kappa(\C)$ as follows. At the finite component, define $\varphi_f\in {\mathcal S}(\wh\bfX)$ by 
\beq\label{finitetest}
  \varphi_f =   {\mathbb I}_{ V(\wh\Z)\oplus V(\wh\Z)} \text{ the characteristic function of $V(\wh\Z)\oplus V(\wh\Z)$. }  
\eeq
\begin{lm}\label{Yoshidalev}
For $g=\begin{pmatrix} A & B \\ C & D \end{pmatrix}\in U^{(2)}_0(N_F)\cap\Sp_4(\widehat{\mathbf Z})$, we have \[\omega_V(g)\varphi_f = \chi_{F/\Q}({\rm det}D) \varphi_f,\]
where $\chi_{F/\Q}:\Q^\x\bksl \A^\x\to\stt{\pm 1}$ is the quadratic character attached to $F/\Q$.\end{lm}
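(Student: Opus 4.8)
The plan is to establish the identity one prime at a time. Since $\varphi_f=\bigotimes_p\varphi_{f,p}$ with $\varphi_{f,p}=\mathbb{I}_{V(\Z_p)^{\oplus2}}$, since $\omega_V=\bigotimes_p\omega_{V_p}$, and since $\chi_{F/\Q}=\prod_p\chi_{F_p/\Q_p}$ has conductor dividing $\Delta_F\mid N_F$, it suffices to prove, for every prime $p$ and every $g_p=\begin{pmatrix}A_p&B_p\\C_p&D_p\end{pmatrix}\in\Sp_4(\Z_p)$ with $C_p\equiv 0\pmod{N_F}$, that $\omega_{V_p}(g_p)\varphi_{f,p}=\chi_{F_p/\Q_p}(\det D_p)\,\varphi_{f,p}$. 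Two preliminary observations: if $p\mid N_F$, the symplectic relation $A_p{}^{\rm t}D_p-B_p{}^{\rm t}C_p=\bfone_2$ combined with $C_p\equiv 0\pmod p$ forces $A_p{}^{\rm t}D_p\equiv\bfone_2\pmod p$, so $\det A_p,\det D_p\in\Z_p^\times$; if $p\nmid N_F$, then $F_p/\Q_p$ is unramified (so $\chi_{F_p/\Q_p}$ is trivial on units) and the lattice $V(\Z_p)$ is unimodular for the norm form ${\rm n}$, so $\widehat{\varphi_{f,p}}=\varphi_{f,p}$.

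Consider first $p\nmid N_F$. Write $g_p$ as a word in the standard generators $\begin{pmatrix}a&0\\0&{}^{\rm t}a^{-1}\end{pmatrix}$ with $a\in\GL_2(\Z_p)$, $u(B)$ with $B={}^{\rm t}B\in{\rm M}_2(\Z_p)$, and $w=\begin{pmatrix}0&\bfone_2\\-\bfone_2&0\end{pmatrix}$ of $\Sp_4(\Z_p)$, and apply the formulas of \subsecref{secWeil}. The torus generators act on $\varphi_{f,p}$ by $\chi_{F_p/\Q_p}(\det a)\,|\det a|_p^2=1$ followed by precomposition with $a$, which preserves $V(\Z_p)^{\oplus2}$; the unipotent generators act by multiplication by $\psi_p({\rm Tr}(S_xB))$, which equals $1$ on $V(\Z_p)^{\oplus2}$ because ${\rm n}$, hence also the bilinear form $(\,\cdot\,,\,\cdot\,)$, takes values in $\Z_p$ on $V(\Z_p)$; and $w$ acts by $\gamma_{V_p}^2$ times the Fourier transform, where $\gamma_{V_p}^2=1$ since the quaternary space $V_p$ is split at $p\nmid N^-$ with unit discriminant, and $\widehat{\varphi_{f,p}}=\varphi_{f,p}$ by unimodularity. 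Hence $\omega_{V_p}(g_p)\varphi_{f,p}=\varphi_{f,p}=\chi_{F_p/\Q_p}(\det D_p)\,\varphi_{f,p}$.

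Now let $p\mid N_F$. The invertibility of $D_p$ over $\Z_p$ permits the factorization
\[
g_p=u(B_pD_p^{-1})\,\begin{pmatrix}{}^{\rm t}D_p^{-1}&0\\0&D_p\end{pmatrix}\,u^-(D_p^{-1}C_p),\qquad u^-(Y):=\begin{pmatrix}\bfone_2&0\\Y&\bfone_2\end{pmatrix},
\]
in which $B_pD_p^{-1}$ is symmetric and lies in ${\rm M}_2(\Z_p)$, while $Y:=D_p^{-1}C_p$ is symmetric and lies in $N_F{\rm M}_2(\Z_p)$. The first two factors are treated exactly as in the previous paragraph: $u(B_pD_p^{-1})$ fixes $\varphi_{f,p}$, and $\begin{pmatrix}{}^{\rm t}D_p^{-1}&0\\0&D_p\end{pmatrix}$ multiplies it by $\chi_{F_p/\Q_p}(\det D_p)\,|\det D_p|_p^{-2}=\chi_{F_p/\Q_p}(\det D_p)$. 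For the third factor we conjugate by $w$: from $u^-(Y)=w^{-1}\,u(-Y)\,w$ we get $\omega_{V_p}(u^-(Y))\varphi_{f,p}=\omega_{V_p}(w)^{-1}\,\omega_{V_p}(u(-Y))\,\omega_{V_p}(w)\,\varphi_{f,p}$; here $\omega_{V_p}(w)\varphi_{f,p}$ is a scalar multiple of $\mathbb{I}_{(V(\Z_p)^\vee)^{\oplus2}}$, the indicator of the doubled dual lattice, and $\omega_{V_p}(u(-Y))$ multiplies this by $\psi_p(-{\rm Tr}(S_xY))$. Since the level of the quadratic lattice $V(\Z_p)$ divides $N_F$ we have $N_F\cdot{\rm n}(V(\Z_p)^\vee)\subseteq\Z_p$ and $N_F\cdot(V(\Z_p)^\vee,V(\Z_p)^\vee)\subseteq\Z_p$, whence ${\rm Tr}(S_xY)\in\Z_p$ for all $x\in(V(\Z_p)^\vee)^{\oplus2}$ and $Y\in N_F{\rm M}_2(\Z_p)$; thus $\omega_{V_p}(u(-Y))$ fixes $\omega_{V_p}(w)\varphi_{f,p}$, and applying $\omega_{V_p}(w)^{-1}$ returns $\varphi_{f,p}$. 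Multiplying the three contributions gives $\omega_{V_p}(g_p)\varphi_{f,p}=\chi_{F_p/\Q_p}(\det D_p)\,\varphi_{f,p}$, and taking the product over all $p$ completes the argument.

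The only genuinely computational ingredient, and the main thing to verify carefully, is that the level of the quadratic lattice $V(\Z_p)$ divides $N_F$ for each $p\mid N_F$ (together with the companion bound on ${\rm n}(V(\Z_p)^\vee)$). Because $N^-$, $N^+$ and $\Delta_F$ are pairwise coprime, this breaks into three local checks: for $p\mid N^-$, $V(\Z_p)$ is the norm form of the maximal order of the local division quaternion algebra, of level $p$; for $p\mid N^+$, it is the norm form of a local Eichler order, of level $p^{{\rm ord}_p(N^+)}$; and for $p\mid\Delta_F$, it is the quaternary form attached to the ramified quadratic extension $F_p/\Q_p$, whose level divides $\Delta_{F_p}$ — this case including the wildly ramified prime $p=2$. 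A second, minor point of bookkeeping is the Weil index $\gamma_{V_p}^2$, which is $1$ when $p\nmid N^-$ but not in general; at $p\mid N^-$ it is harmless, since there it enters only through the conjugation $u^-(Y)=w^{-1}u(-Y)w$, where it cancels. Everything else is a direct substitution into the Schr\"odinger-model formulas of \subsecref{secWeil}.
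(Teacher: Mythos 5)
The paper offers no in-text argument for this lemma: its proof is a single citation to \cite[Propositions~2.5 and 2.6]{yo80}. Your proposal is therefore a self-contained reconstruction rather than a variant of the paper's proof, and it goes by the standard route: factor $\varphi_f=\otimes_p\varphi_{f,p}$ and work prime by prime; at $p\nmid N_F$ show $\Sp_4(\Z_p)$-invariance via the generators, self-duality of $V(\Z_p)$, and $\gamma_{V_p}^2=1$; at $p\mid N_F$ use the Iwahori-type factorization $g_p=u(B_pD_p^{-1})\,\mathrm{diag}({}^{\rm t}D_p^{-1},D_p)\,u^-(D_p^{-1}C_p)$ and conjugate the lower unipotent through $w$ so that the Weil index drops out. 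I checked the factorization identity (it reduces to the symplectic relations $A{}^{\rm t}D-B{}^{\rm t}C=\bfone_2$, ${}^{\rm t}BD={}^{\rm t}DB$, $C{}^{\rm t}D=D{}^{\rm t}C$), the symmetry and integrality of $B_pD_p^{-1}$, the membership $D_p^{-1}C_p\in N_F\mathrm{M}_2(\Z_p)$, and the scalar $\chi_{F_p/\Q_p}(\det D_p^{-1})|\det D_p^{-1}|_p^2=\chi_{F_p/\Q_p}(\det D_p)$; all are right. Since the paper merely cites Yoshida, your argument is a clean alternative that makes transparent where the character $\chi_{F/\Q}(\det D)$ comes from and why only the level $N_F$ matters.

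The only content you have deliberately left as a pointer rather than a verification is precisely what the cited Propositions 2.5--2.6 of \cite{yo80} establish: that $V(\Z_p)$ is integral for the associated bilinear form (so that $u(B)$ acts trivially), self-dual with $\gamma_{V_p}^2=1$ when $p\nmid N_F$, and of level dividing $N_F$ when $p\mid N_F$ --- the division-algebra case at $p\mid N^-$, the Eichler-order case at $p\mid N^+$, and the ramified-$F_p$ case at $p\mid\Delta_F$ (including the wild prime $p=2$, where $\ord_2(\Delta_{F_2})$ can be $2$ or $3$ and the naive basis $\{1,\sqrt{d_F}\}$ of $\cO_{F_2}$ is wrong). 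To be fully complete one should also observe that the Eichler order $R$ is stable under the main involution (so $\mathrm{n}(V(\Z_p))\subset\Z_p$) and that, for $p\nmid N_F$, $\Sp_4(\Z_p)$ is indeed generated by the Siegel parabolic $P(\Z_p)$ together with $w$. Supplying these local lattice computations (or simply citing them, as the paper does) closes the argument; as written, what you have is a correct reduction of the lemma to exactly that local input, honestly flagged as such.
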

\begin{proof}
  This is \cite[Proposition 2.5, Proposition 2.6]{yo80}. 
\end{proof}
At the archimedean place $\infty$, we have identified $H(\R)$ with 
$\bbH^\x\x\bbH/\R^\x$ and $V_\infty$ with $\bbH$ via the isomorphisms fixed in \eqref{E:splittrivialization} so that $H(\R)$ acts on $V_\infty=\bbH$ by $\varrho(a,d)x=axd^{-1}$. To define the archimedean test function $\varphi\in\cS(\bfX_\infty)=\cS(\bbH^{\oplus 2})$, we need to introduce several special polynomials.  
Let ${\mathbf p}: {\rm M}_2(\C)^{{\rm Tr}=0}\to \C[X_1,Y_1]_2$ be the map defined by
\[     {\mathbf p}\left( \begin{pmatrix} a & b \\ c & -a \end{pmatrix}\right)
  =  - bX_1^2 + 2aX_1Y_1+cY_1^2.
\]It is easy to see that \beq\label{E:equiv1}{\mathbf p}(gxg^{-1})=\tau_1(g)({\mathbf p}(x))\text{ for } g\in\GL_2(\C).\eeq
Define ${\mathbf q}:{\rm M}_2(\C)\to \C[X_1,Y_1]_1\otimes \C[X_2,Y_2]_1$ by
\begin{align*}
   {\mathbf q}(x) = {\rm Tr}\left(  x^* \begin{pmatrix} 0 & 1 \\ -1 & 0 \end{pmatrix} W\right)  
   \quad (W= \begin{pmatrix}  X_1\otimes X_2 & X_1\otimes Y_2 \\ Y_1\otimes X_2 & Y_1\otimes Y_2  \end{pmatrix}).  
\end{align*}
In particular, \begin{align*}
       {\mathbf q}\left( \begin{pmatrix} z & w \\ -\bar{w} & \bar{z} \end{pmatrix} \right) 
   =  \bar{z} Y_1\otimes X_2 + w X_1\otimes X_2 - z X_1\otimes Y_2 + \bar{w} Y_1\otimes Y_2. 
\end{align*}
For each integer $\alpha$ with $0\leq \alpha \leq 2k_2$, 
define $P^\alpha_{\underline{k}}: {\rm M}_2(\C)^{\oplus 2} \to \C[X_1,Y_1]_{2k_1}\otimes \C[X_2,Y_2]_{2k_2}$ by
\begin{align*}
  P^\alpha_{\underline{k}}(x_1,x_2) = {\mathbf p}(x_1x^*_2-\frac{1}{2}{\rm Tr}(x_1x^*_2)\cdot\bfone_2)^{k_1-k_2} \cdot {\mathbf q}(x_1)^\alpha {\mathbf q}(x_2)^{2k_2-\alpha}.  
\end{align*}
Define $P_{\underline{k}}:{\rm M}_2(\C)^{\oplus 2}\to \C[X_1,Y_1]_{2k_1}\otimes \C[X_2,Y_2]_{2k_2}\otimes_\C\C[X,Y]_{2k_2}$ to be the map 
\begin{align*}
   P_{\underline{k}}(x_1,x_2) =& \sum^{2k_2}_{\alpha =0} P^\alpha_k(x_1,x_2)\otimes \binom{2k_2}{\alpha} X^\alpha Y^{2k_2-\alpha}.
\end{align*}
The archimedean Bruhat-Schwartz function $\varphi_\infty: {\mathbf X}_\infty={\mathbb H}^{\oplus 2}\to \C[X_1,Y_1]_{2k_1}\otimes \C[X_2,Y_2]_{2k_2}\otimes_\C\C[X,Y]_{2k_2}$ is defined by
\begin{align}\label{infinitetest}
  \varphi_\infty(x) = e^{-2\pi( {\rm n}(x_1) + {\rm n}(x_2))} \cdot P_{\underline{k}}(x_1,x_2) . 
\end{align}
To be explicit, the polynomials $P_{\underline{k}}: {\mathbb H}^{\oplus 2}  \to \C[X_1,Y_1]_{2k_1}\otimes \C[X_2,Y_2]_{2k_2}\otimes_\C\C[X,Y]_{2k_2}$ can be written down in the following form:
\begin{align*}
     &  P_{\underline{k}} ( \begin{pmatrix} z_1 & w_1 \\ -\bar{w}_1 & \bar{z}_1 \end{pmatrix} , \begin{pmatrix} z_2 & w_2 \\ -\bar{w}_2 & \bar{z}_2 \end{pmatrix}  )  \\
  = & ( ( z_1\bar{z}_2+w_1\bar{w}_2-\bar{w}_1w_2-\bar{z}_1z_2 ) X_1Y_1 + (z_1w_2-w_1z_2 ) X^2_1 + (\bar{z}_1\bar{w}_2-\bar{z}_2\bar{w}_1 )Y^2_1 )^{k_1-k_2}    \\
     & \times \sum^{2k_2}_{\alpha=0} (\bar{z}_1Y_1\otimes X_2 + w_1 X_1\otimes X_2 -  z_1 X_1\otimes Y_2 + \bar{w}_1 Y_1\otimes Y_2)^\alpha  \\
     & \quad \quad \times (\bar{z}_2Y_1\otimes X_2 + w_2 X_1\otimes X_2 -  z_2 X_1\otimes Y_2 + \bar{w}_2 Y_1\otimes Y_2)^{2k_2-\alpha}\binom{2k_2}{\alpha}X^{\alpha}Y^{2k_2-\alpha}.  
\end{align*}
Note that the coefficients of $P_{\ul{k}}$ are integral polynomials in $\stt{z_i,\ol{z}_i, w_i,\ol{w}_i}_{i=1,2}$.
\begin{lm}\label{lem1Y}
For $(h,g)\in H^{(1)}(\C)\times {\rm GL}_2(\C)$, we have
\begin{align*}
  P_{\underline{k}}(\varrho(h)(x_1,x_2)g) = \tau_{\ul{k}}(h)\otimes\rho_{(k_1+k_2,k_1-k_2)}({}^{\rm t}g)(P_{\underline{k}}(x_1,x_2)).
\end{align*}
\end{lm}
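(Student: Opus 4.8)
The plan is to split the identity into two commuting equivariances and then compose them: one for the action of $h\in H^{(1)}(\C)$, which moves only the variables $(X_1,Y_1),(X_2,Y_2)$ of $\cW_{\ul{k}}(\C)=\C[X_1,Y_1]_{2k_1}\otimes\C[X_2,Y_2]_{2k_2}$, and one for the action of $g\in\GL_2(\C)$, which moves only the variables $(X,Y)$ of the factor $\C[X,Y]_{2k_2}$. Unwinding the identifications of \eqref{E:splittrivialization} at $\infty$ and complexifying ($\bbH\otimes\C={\rm M}_2(\C)$), an element $h\in H^{(1)}(\C)$ becomes a pair $(a,d)\in\GL_2(\C)\times\GL_2(\C)$ with $\varrho(h)(x_1,x_2)=(ax_1d^{-1},ax_2d^{-1})$ and $\tau_{\ul{k}}(h)=\tau_{k_1}(a)\otimes\tau_{k_2}(d)$; the condition $h\in H^{(1)}$, i.e. $\nu(\varrho(h))=1$, is exactly $\det a=\det d$, and this is the only place where the hypothesis $h\in H^{(1)}$ (rather than $h\in\GO(V)$) is used. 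On the other side $g$ acts on $(x_1,x_2)$ by $\Q$-linear combinations with the scalar entries of $g$, so the two operations manifestly commute; hence it suffices to establish the two equivariances separately.

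For the $H^{(1)}(\C)$-equivariance the inputs are the quaternion identities $xx^*={\rm n}(x)\bfone_2$ and $x_1x_2^*+x_2x_1^*={\rm Tr}(x_1x_2^*)\bfone_2$; the latter says the traceless part $[x_1,x_2]:=x_1x_2^*-\tfrac{1}{2}{\rm Tr}(x_1x_2^*)\bfone_2$ is alternating, $[x_2,x_1]=-[x_1,x_2]$. Since $d^*d=(\det d)\bfone_2$ and $a^*=(\det a)a^{-1}$, one gets $(ax_1d^{-1})(ax_2d^{-1})^*=(\det a)(\det d)^{-1}\,a(x_1x_2^*)a^{-1}$, so $[ax_1d^{-1},ax_2d^{-1}]=a[x_1,x_2]a^{-1}$ after using $\det a=\det d$; then \eqref{E:equiv1}, applied $k_1-k_2$ times, shows the ${\mathbf p}$-factor of each $P^\alpha_{\ul{k}}$ transforms via $\rho_{(k_1-k_2,-(k_1-k_2))}(a)$ in the $(X_1,Y_1)$-variables. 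For the ${\mathbf q}$-factors, ${\mathbf q}$ is linear in its matrix argument, and combining this with the elementary identities $x^*=J\,{}^{\rm t}x\,J^{-1}$ and $J^{-1}a^{-1}J=(\det a)^{-1}\,{}^{\rm t}a$ (here $J=\pMX{0}{1}{-1}{0}$) one computes ${\mathbf q}(axd^{-1})=(\det d)^{-1}\,{\mathbf q}(x)$ after the substitutions $(X_1,Y_1)\mapsto(X_1,Y_1)a$ and $(X_2,Y_2)\mapsto(X_2,Y_2)d$. Multiplying these laws over the $k_1-k_2$ factors of ${\mathbf p}$ and the $2k_2$ factors of ${\mathbf q}$ occurring in each $P^\alpha_{\ul{k}}$ (the monomials $X^\alpha Y^{2k_2-\alpha}$ are untouched and the substitution is uniform in $\alpha$), the accumulated scalar is $(\det a)^{-(k_1-k_2)}(\det d)^{-2k_2}$, which equals $(\det a)^{-k_1}(\det d)^{-k_2}$ precisely because $\det a=\det d$ — and this is the scalar in $\tau_{k_1}(a)\otimes\tau_{k_2}(d)$. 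This gives $P_{\ul{k}}(\varrho(h)(x_1,x_2))=\tau_{\ul{k}}(h)\bigl(P_{\ul{k}}(x_1,x_2)\bigr)$.

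For the $\GL_2(\C)$-equivariance, write $(x_1,x_2)g=(x_1g_{11}+x_2g_{21},\,x_1g_{12}+x_2g_{22})$ for $g=\pMX{g_{11}}{g_{12}}{g_{21}}{g_{22}}$. The same quaternion identities, using the alternating property of $[\cdot,\cdot]$ to cancel the terms $x_ix_i^*$, give $[x_1g_{11}+x_2g_{21},\,x_1g_{12}+x_2g_{22}]=(\det g)[x_1,x_2]$, so the ${\mathbf p}$-factor contributes only the scalar $(\det g)^{k_1-k_2}$, with no change of variables. By linearity, ${\mathbf q}(x_1g_{11}+x_2g_{21})=g_{11}{\mathbf q}(x_1)+g_{21}{\mathbf q}(x_2)$ and similarly with $g_{12},g_{22}$, so the binomial theorem yields
\[
\sum_{\alpha=0}^{2k_2}\binom{2k_2}{\alpha}{\mathbf q}(x_1g_{11}+x_2g_{21})^{\alpha}{\mathbf q}(x_1g_{12}+x_2g_{22})^{2k_2-\alpha}X^{\alpha}Y^{2k_2-\alpha}=\bigl({\mathbf q}(x_1)(g_{11}X+g_{12}Y)+{\mathbf q}(x_2)(g_{21}X+g_{22}Y)\bigr)^{2k_2},
\]
i.e. the $\C[X,Y]_{2k_2}$-factor is transformed by $(X,Y)\mapsto(X,Y)\,{}^{\rm t}g$; together with the scalar $(\det g)^{k_1-k_2}=(\det{}^{\rm t}g)^{k_1-k_2}$ this is exactly $\rho_{(k_1+k_2,k_1-k_2)}({}^{\rm t}g)$ on that factor. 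Since the $H^{(1)}(\C)$- and $\GL_2(\C)$-operations act on disjoint sets of polynomial variables (and commute), composing the two statements gives $P_{\ul{k}}(\varrho(h)(x_1,x_2)g)=\tau_{\ul{k}}(h)\otimes\rho_{(k_1+k_2,k_1-k_2)}({}^{\rm t}g)\bigl(P_{\ul{k}}(x_1,x_2)\bigr)$.

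The hard part is not conceptual but purely notational: one must keep straight the three $\GL_2(\C)$-conventions in play — left versus right multiplication on ${\rm M}_2(\C)$, the transpose appearing in ${}^{\rm t}g$, and the determinant twists built into $\tau_{k_i}=\rho_{(k_i,-k_i)}$ and $\rho_\kappa$ — and then verify that the determinant exponents cancel. That cancellation fails for a general similitude and holds here only because $\nu(\varrho(h))=1$, equivalently $\det a=\det d$. I would organize the calculation so that ${\mathbf p}$ and ${\mathbf q}$, and the two group actions, are treated independently, keeping this bookkeeping localized.
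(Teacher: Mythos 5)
Your proof is correct and is exactly the ``direct computation'' that the paper's one-line proof defers to (the paper itself cites only \eqref{E:equiv1} for the $\mathbf p$-factor and leaves the $\mathbf q$-factors and the $\GL_2(\C)$ half to the reader). You have correctly isolated the two structural points that the computation hinges on, namely the identity $a^*=J\,{}^{\rm t}a\,J^{-1}$ driving the $\mathbf q$-equivariance and the essential use of $\det a=\det d$ (i.e.\ $h\in H^{(1)}$, not merely $h\in\GO(V)$) to reconcile the determinant exponents with those in $\tau_{k_1}\otimes\tau_{k_2}$.
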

\begin{proof}Note that $H^{(1)}(\C)=\stt{(a,d)\in\GL_2(\C)^{\oplus 2}\mid \det a=\det d}$. The assertion for $h\in H^{(1)}(\C)$ can be verified by \eqref{E:equiv1}, and the assertion for $\GL_2(\C)$ can be checked by a direct computation.
\end{proof}

\begin{lm}\label{lem2Y}
The map $P_{\underline{k}}:{\mathbb H}^{\oplus 2}\to \C[X_1,Y_1]_{2k_1}\otimes \C[X_2,Y_2]_{2k_2}\otimes_\C\C[X,Y]_{2k_2}$ is a vector-valued pluri-harmonic polynomial.
\end{lm}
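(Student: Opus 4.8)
The plan is to verify the three defining Laplacian conditions directly. Recall $V_\infty$ has been identified with $\bbH$; writing $x_i=\begin{pmatrix}z_i&w_i\\-\bar w_i&\bar z_i\end{pmatrix}$ we have ${\rm n}(x_i)=|z_i|^2+|w_i|^2$, the standard positive-definite form on $\R^4\cong\C^2$. On $\mathbf X_\infty=V_\infty\oplus V_\infty$ the pluri-harmonic operators attached to the pair $({\rm O}(V),{\rm Sp}_4)$ are $\Delta_{11}=4(\partial_{z_1}\partial_{\bar z_1}+\partial_{w_1}\partial_{\bar w_1})$, its analogue $\Delta_{22}$ in $x_2$, and the cross-operator $\Delta_{12}=2(\partial_{z_1}\partial_{\bar z_2}+\partial_{\bar z_1}\partial_{z_2}+\partial_{w_1}\partial_{\bar w_2}+\partial_{\bar w_1}\partial_{w_2})$, and what has to be shown is $\Delta_{11}P_{\underline{k}}=\Delta_{22}P_{\underline{k}}=\Delta_{12}P_{\underline{k}}=0$. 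Since the $\Delta_{ij}$ act only on the $\mathbf X_\infty$-variables while the monomials $X^\alpha Y^{2k_2-\alpha}$ in the target are linearly independent over $\C$, it is enough to prove that each summand $P^\alpha_{\underline{k}}(x_1,x_2)=\Pi(x_1,x_2)^{k_1-k_2}\,\mathbf q(x_1)^\alpha\,\mathbf q(x_2)^{2k_2-\alpha}$ is pluri-harmonic for $0\le\alpha\le 2k_2$, where $\Pi(x_1,x_2):=\mathbf p\bigl(x_1x_2^\ast-\tfrac12{\rm Tr}(x_1x_2^\ast)\bfone_2\bigr)$ is the bilinear building block.

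Each $P^\alpha_{\underline{k}}$ is a monomial in the three polynomials $\Pi$, $\mathbf q(x_1)$, $\mathbf q(x_2)$, so I would appeal to the standard principle — immediate from the Leibniz rule for second-order operators — that a product of polynomials is pluri-harmonic as soon as each factor is annihilated by all $\Delta_{ij}$ and every mixed symbol $\langle\nabla_{x_i}F,\nabla_{x_j}G\rangle:=\sum_\nu(\partial_{x_i^{(\nu)}}F)(\partial_{x_j^{(\nu)}}G)$ between two of the factors (or a factor with itself) vanishes. Thus the whole statement comes down to a short list of elementary identities: (a) $\mathbf q(x_i)$ is a linear form in the entries of $x_i$ not involving $x_j$, hence killed by every $\Delta_{ab}$, and it is ${\rm n}$-isotropic, $\langle\nabla_{x_i}\mathbf q(x_i),\nabla_{x_i}\mathbf q(x_i)\rangle=0$; (b) $\Pi$ is bilinear, so $\Delta_{11}\Pi=\Delta_{22}\Pi=0$ automatically, and in addition $\Delta_{12}\Pi=0$ and $\langle\nabla_{x_i}\Pi,\nabla_{x_i}\Pi\rangle=0$ for $i=1,2$; (c) the remaining cross-symbols vanish, namely $\langle\nabla_{x_i}\Pi,\nabla_{x_i}\mathbf q(x_i)\rangle=0$ for $i=1,2$, and $\langle\nabla_{x_1}\mathbf q(x_1),\nabla_{x_2}\Pi\rangle=\langle\nabla_{x_1}\Pi,\nabla_{x_2}\mathbf q(x_2)\rangle=\langle\nabla_{x_1}\mathbf q(x_1),\nabla_{x_2}\mathbf q(x_2)\rangle=0$ (all other symbols are zero because $\mathbf q(x_i)$ is constant in $x_j$).

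These identities are verified by substituting the explicit formulas for $\mathbf p$, $\mathbf q$ and $P_{\underline{k}}$ recorded above into the operators. The isotropy statements reduce to a single remark: a linear form $Az+B\bar z+C w+D\bar w$ on $\bbH$ is ${\rm n}$-isotropic exactly when $AB+CD=0$. Applying this to $\mathbf q(x_i)$, and to $\Pi$ regarded as a linear form in $x_1$ (resp.\ $x_2$) whose coefficients $A,B,C,D$ are read off from the displayed expansion of $P_{\underline{k}}$, one finds in each case that $AB+CD$ collapses to $0$; the vanishing of $\Delta_{12}\Pi$ and of the mixed cross-symbols is obtained by the same kind of direct substitution and cancellation. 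Feeding this back into the Leibniz expansion gives $\Delta_{ij}P^\alpha_{\underline{k}}=0$ for $(i,j)\in\{(1,1),(2,2),(1,2)\}$ and all $\alpha$, hence $\Delta_{ij}P_{\underline{k}}=0$, which is the assertion.

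I do not anticipate a genuine obstacle: all of the content lies in the short list of atomic identities of the second step, which is precisely where the specific shape of $\mathbf p$ and $\mathbf q$ is used — $\mathbf q(x_i)$ having been designed as an isotropic linear form, and $\Pi=\mathbf p$ of the trace-free part of $x_1x_2^\ast$ as a pluri-harmonic polynomial with isotropic gradients. The only delicate aspect is keeping track of signs in the substitutions. A more conceptual alternative would be to combine Kashiwara--Vergne theory with the ${\rm O}(V)\times{\rm GL}_2$-equivariance established in \lmref{lem1Y} to recognise $P_{\underline{k}}$ as a joint highest-weight vector in the space of pluri-harmonic polynomials of its type; but that still requires showing $P_{\underline{k}}\neq 0$ and that its bidegree is minimal for its type, so it does not obviously simplify matters.
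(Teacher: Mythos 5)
Your proposal is correct and takes essentially the same approach as the paper: the paper's proof simply records the definition of the $\Delta_{ij}$ and then states that ``the lemma follows from a direct and elementary computation of $\Delta_{ij}P_{\underline{k}}$; we leave it to the readers,'' and your argument is a clean execution of exactly that computation, organized via the Leibniz rule as a short list of atomic identities. One minor caveat on notation: the symbol you write as $\langle\nabla_{x_i}F,\nabla_{x_j}G\rangle=\sum_\nu(\partial_{x_i^{(\nu)}}F)(\partial_{x_j^{(\nu)}}G)$ should be the cross-term of the operator $\Delta_{ij}$ itself, which pairs $z$-derivatives with $\bar z$-derivatives (and $w$ with $\bar w$), not like-with-like; your subsequent ``$AB+CD=0$'' criterion shows you are using the correct pairing in practice, so this is a typographical slip rather than a gap.
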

\begin{proof}We recall the definition of pluri-harmonic polynomials given in \cite[p.\,18]{kv78}.
Let $\Delta_{11}, \Delta_{22}$ and $\Delta_{12}$ be the differential operators on $\C[z_1, \bar{z}_1, w_1, \bar{w}_1, z_2, \bar{z}_2, w_2, \bar{w}_2]$ defined by
\begin{align*}
   \Delta_{ii} = \frac{\partial^2}{\partial z_i \partial\bar{z}_i} + \frac{\partial^2}{\partial w_i \partial\bar{w}_i} \quad (i=1,2), 
   \quad 
   \Delta_{12} =   \frac{\partial^2}{\partial z_1\partial\bar{z}_2} + \frac{\partial^2}{\partial \bar{z}_1\partial z_2}
                     + \frac{\partial^2}{\partial w_1\partial\bar{w}_2} + \frac{\partial^2}{\partial \bar{w}_1\partial w_2}.  
\end{align*}
Then a polynomial $P\in\C[z_1, \bar{z}_1, w_1, \bar{w}_1, z_2, \bar{z}_2, w_2, \bar{w}_2]$ is said to be pluri-harmonic if and only if \[\Delta_{ij}P=0\text{ for all }i,j\in \{1,2\}.\]
Now the lemma follows from a direct and elementary computation of $\Delta_{ij}P_{\underline{k}}$. We leave it to the readers.
\end{proof}

\begin{lem}\label{lem3Y}
{\itshape
Let $P(x)$ be a pluri-harmonic polynomial on ${\mathbf X}_\infty={\mathbb H}^{\oplus 2}$ and let
\begin{align*}
\varphi(x) = P(x) \cdot e^{-2\pi{\rm Tr}(S_x)}. 
\end{align*}
For $u=A+\sqrt{-1} B\in {\rm U}_2(\R)\subset {\rm GL}_2(\C)$, we have 
\begin{align*}
  \omega_{V_\infty}( \begin{pmatrix} A & B \\ -B & A \end{pmatrix}) \varphi(x) = \varphi(xu)\cdot (\det u)^2. 
\end{align*}
}
\end{lem}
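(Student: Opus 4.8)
The plan is to verify this ``covariance'' identity for the Gaussian-times-pluri-harmonic test function directly from the \Shord model of the Weil representation, reducing to the well-known archimedean formulas. First I would recall that on $\bbH^{\oplus 2}$ we have the quadratic form $\mathrm{n}$ of signature $(4,0)$ (on each copy), so the archimedean Weil index $\gamma_{V_\infty}$ and the splitting behave as in the positive-definite case; in particular the element $\begin{pmatrix} A & B \\ -B & A\end{pmatrix}$, for $u=A+\sqrt{-1}B\in\mathrm{U}_2(\R)$, lies in the image of $\mathrm{U}_2(\R)\hookrightarrow \Sp_4(\R)$ and is the standard lift of a maximal-compact element. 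The key classical input (Kashiwara--Vergne \cite{kv78}, or the explicit computation in the style of \cite{yo80}) is that if $P$ is pluri-harmonic of degree $(d_1,d_2)$ in the two ``column'' pairs of variables, then the Gaussian $e^{-2\pi\mathrm{Tr}(S_x)}$ is an eigenvector for $\omega_{V_\infty}$ restricted to this torus-like copy of $\mathrm{U}_2(\R)$, and $P$ contributes exactly the factor $\rho$-twist coming from the right action $x\mapsto xu$ together with a power of $\det u$.

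The concrete steps I would carry out: (1) It suffices to check the identity on a set of generators of $\mathrm{U}_2(\R)$, e.g. the diagonal torus $\mathrm{diag}(e^{i\theta_1},e^{i\theta_2})$ together with one generic unitary element, or equivalently to check it at the Lie algebra level and then exponentiate. (2) For the Lie algebra of $\mathrm{U}_2(\R)$ sitting inside $\mathfrak{sp}_4(\R)$, use the explicit action of the Weil representation by the differential operators associated to $\begin{pmatrix}0&B'\\0&0\end{pmatrix}$ and $\begin{pmatrix}0&0\\-B'&0\end{pmatrix}$ (multiplication by $\psi_\infty(\mathrm{Tr}(S_xB'))$-derivatives and by the Fourier-transform side $\gamma_{V_\infty}^2\widehat{(\cdot)}$), and combine them so that the unitary directions act by first-order operators. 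The point is that on Gaussian-times-polynomial, the raising/lowering operators from the two nilpotent pieces combine into the operator $\sum (x_{ij}\partial_{x_{ik}} - \text{conjugate terms})$, which is precisely the infinitesimal right translation by $\mathfrak{u}_2$ on the polynomial part, plus a scalar coming from the Gaussian that accounts for $(\det u)^2$; the pluri-harmonicity $\Delta_{ij}P=0$ is exactly what kills the extra second-order contribution of the Fourier-transform piece. (3) Integrate up: since $\mathrm{U}_2(\R)$ is connected, matching the infinitesimal action on both sides gives the global identity $\omega_{V_\infty}\!\left(\begin{pmatrix}A&B\\-B&A\end{pmatrix}\right)\varphi(x)=\varphi(xu)(\det u)^2$.

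The main obstacle is bookkeeping the normalizations: getting the precise scalar $(\det u)^2$ rather than some other power of $\det u$ or $|\det u|$, and verifying that the Weil-index factors $\gamma_{V_\infty}^2$ that appear when one decomposes $\begin{pmatrix}A&B\\-B&A\end{pmatrix}$ into upper/lower unipotents and the Weyl element cancel correctly for the positive-definite quaternary form (where $\gamma_{V_\infty}$ is an explicit eighth root of unity whose square enters). The cleanest route around this is to avoid decomposing into Bruhat cells and instead use the known ``compact'' formula for $\omega_{V_\infty}$ on $\mathrm{U}_n(\R)$ acting on pluri-harmonic polynomials times Gaussians, i.e. cite the relevant statement in \cite{kv78} (or re-derive it via the oscillator-representation eigenfunction property), and then simply match the weight of $P_{\underline{k}}$, which by \lmref{lem1Y} transforms under the right $\GL_2(\C)$-action through $\rho_{(k_1+k_2,k_1-k_2)}({}^{\rm t}g)$; restricting that to $g=u\in\mathrm{U}_2(\R)$ and using $\det u\cdot\overline{\det u}=1$ collapses the determinant twist to the asserted $(\det u)^2$. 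I expect the rest to be a routine, if slightly lengthy, verification that I would leave to the reader in the same spirit as \lmref{lem2Y}.
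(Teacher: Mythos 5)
Your main proposed route — checking the identity infinitesimally on $\mathfrak{u}_2\subset\mathfrak{sp}_4(\R)$ (so that pluri-harmonicity $\Delta_{ij}P=0$ kills the second-order terms and the remaining operator is first-order transport plus a scalar), then integrating up over the connected group $\mathrm{U}_2(\R)$ — is genuinely different from the paper's proof and would work. The paper instead invokes \cite[Lemma 4.5]{kv78}, the explicit formula for the Fourier transform of a Gaussian times a pluri-harmonic polynomial, applies it to compute $\omega_{V_\infty}$ on $\bigl(\begin{smallmatrix}&1\\-1&\end{smallmatrix}\bigr)u(b)$, and then uses the Bruhat-type decomposition
\[
\begin{pmatrix} A & B \\ -B & A \end{pmatrix}
=\begin{pmatrix} 1 & -AB^{-1} \\ 0 & 1 \end{pmatrix}
\begin{pmatrix} {}^{\rm t}B^{-1} & 0 \\ 0 & B \end{pmatrix}
\begin{pmatrix}  & 1 \\ -1 &  \end{pmatrix}
\begin{pmatrix} 1 & -B^{-1}A \\ 0 & 1 \end{pmatrix}
\]
to assemble the general compact element. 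Your infinitesimal route avoids the Weil-index bookkeeping you correctly flag as a potential annoyance (the $\gamma_{V_\infty}^2$ from the Weyl element is exactly what the $\det(z/\sqrt{-1})^{-2}$ in the Kashiwara--Vergne formula absorbs in the paper's version), at the cost of having to set up the metaplectic Lie-algebra action carefully; the paper's route is more self-contained given that one already accepts the cited integral formula. Both give the same scalar $(\det u)^2 = (\det u)^{\dim V/2}$, which comes entirely from the Gaussian factor (dimension $4$, so exponent $2$), not from the polynomial.

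There is, however, a confusion in your closing paragraph that you should fix. You suggest pinning down the exponent $(\det u)^2$ by ``matching the weight of $P_{\underline{k}}$'' via \lmref{lem1Y}, i.e.\ by restricting $\rho_{(k_1+k_2,k_1-k_2)}({}^{\rm t}g)$ to $g=u\in\mathrm{U}_2(\R)$. This conflates two independent contributions. The present lemma is stated and used for an \emph{arbitrary} pluri-harmonic $P$, and on its right-hand side the polynomial is already carried along inside $\varphi(xu)$; the factor $(\det u)^2$ is an additional universal scalar, independent of $P$ and in particular of its degree. \lmref{lem1Y}'s transformation through $\rho_{(k_1+k_2,k_1-k_2)}$ describes only how the specific $P_{\underline{k}}$ unpacks the $P(xu)$ piece, and it enters only in \lmref{lem4Y}, where the two factors $(k_1+k_2,k_1-k_2)$ and $(2,2)$ are combined to give $\kappa=(k_1+k_2+2,k_1-k_2+2)$. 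So the $\det u\cdot\overline{\det u}=1$ trick does not by itself determine the exponent; the ``$2$'' must come out of the Gaussian's transformation (equivalently, $\dim V/2$), as in the paper's Kashiwara--Vergne step or in the scalar term of your infinitesimal computation.
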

\begin{proof}
By \cite[Lemma 4.5]{kv78}, we have
\begin{align*}
  \int_{{\mathbf X}_\infty} \psi(x{}^{\rm t}y)\psi(\frac{1}{2}xz{}^{\rm t}x) P(x) dx 
  = {\rm det} (\frac{z}{\sqrt{-1}})^{-2} \cdot \psi(-\frac{1}{2}yz^{-1}{}^{\rm t}y) P(-yz^{-1}). 
\end{align*}
We thus obtain
\begin{align*}
     \omega_{V_\infty}( \begin{pmatrix} & 1 \\ -1 &  \end{pmatrix} u(b)) \varphi
  = P(-x(b+\sqrt{-1})^{-1}) \psi(-\frac{1}{2}\langle x,x(b+\sqrt{-1})^{-1} \rangle) \cdot {\rm det}(\frac{b+\sqrt{-1}}{\sqrt{-1}})^2.   
\end{align*}
Using the decomposition
\begin{align*}
   \begin{pmatrix} A & B \\ -B & A \end{pmatrix} 
 =\begin{pmatrix} 1 & -AB^{-1} \\ 0 & 1 \end{pmatrix}
   \begin{pmatrix} {}^{\rm t}B^{-1} & 0 \\ 0 & B \end{pmatrix}
   \begin{pmatrix} & 1 \\ -1 &  \end{pmatrix}
   \begin{pmatrix} 1 & -B^{-1}A \\ 0 & 1 \end{pmatrix},
\end{align*}
one shows the lemma by a straightforward calculation.
\end{proof}

\begin{lem}\label{lem4Y}
{\itshape
For $(h,\bfk)\in H^{(1)}(\R)\times \bfK_\infty$,
\begin{align*}
   (\omega_\infty(h,\bfk)\varphi_\infty)(x)= &\tau_{\ul{k}}(h^{-1})\otimes \rho_\kappa({}^{\rm t}\bfk)(\varphi_\infty (x)).  
\end{align*}
}
\end{lem}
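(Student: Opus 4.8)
The plan is to separate the two variables $h\in H^{(1)}(\R)$ and $\bfk\in\bfK_\infty$ and treat them in turn, since the Weil representation action $\omega_\infty(h,\bfk)$ factors (up to the similitude normalization, which is trivial here because $\nu(h)=\nu(\bfk)=1$) as the orthogonal action of $h$ followed by the symplectic action of $\bfk$. First I would deal with the orthogonal part: for $h\in H^{(1)}(\R)$ one has $(\omega_\infty(h,1)\varphi_\infty)(x)=\varphi_\infty(\varrho(h)^{-1}x)$ by the definition of $\omega_v$ restricted to the orthogonal group. Since ${\rm n}$ is $H^{(1)}$-invariant (elements of $H^{(1)}$ act by isometries), the Gaussian factor $e^{-2\pi({\rm n}(x_1)+{\rm n}(x_2))}$ is unchanged, and it remains to compute $P_{\underline k}(\varrho(h)^{-1}(x_1,x_2))$. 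By \lmref{lem1Y} applied with $g=\bfone_2$ (and $h$ replaced by $h^{-1}$), this equals $\tau_{\ul k}(h^{-1})\big(P_{\underline k}(x_1,x_2)\big)$, which is exactly the claimed orthogonal transformation law; note $\tau_{\ul k}$ does not touch the auxiliary variable $(X,Y)$, consistent with the shape of the right-hand side.

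Next I would handle the symplectic part: it suffices to prove the formula for $(h,\bfk)=(1,\bfk)$, i.e.\ $(\omega_\infty(1,\bfk)\varphi_\infty)(x)=\rho_\kappa({}^{\rm t}\bfk)(\varphi_\infty(x))$, and then combine with the first step using that $\omega_\infty$ is a representation. Writing $\bfk=\pMX{A}{B}{-B}{A}$ corresponding to $u=A+\sqrt{-1}B\in{\rm U}_2(\R)\subset\GL_2(\C)$, \lmref{lem3Y} (which applies because $P_{\underline k}$ is pluri-harmonic by \lmref{lem2Y}) gives
\[
\omega_{V_\infty}(\bfk)\varphi_\infty(x)=\varphi_\infty(xu)\cdot(\det u)^2 .
\]
Now I apply \lmref{lem1Y} with $h=\bfone$ and $g=u$: this yields $P_{\underline k}((x_1,x_2)u)=\rho_{(k_1+k_2,\,k_1-k_2)}({}^{\rm t}u)\big(P_{\underline k}(x_1,x_2)\big)$, and hence $\varphi_\infty(xu)=\rho_{(k_1+k_2,k_1-k_2)}({}^{\rm t}u)(\varphi_\infty(x))$ since the Gaussian is ${\rm U}_2(\R)$-invariant (multiplication on the right by $u\in{\rm U}_2$ preserves ${\rm n}(x_1)+{\rm n}(x_2)={\rm Tr}(S_x)$). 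Finally, multiplying by $(\det u)^2=(\det{}^{\rm t}u)^2$ turns the weight $(k_1+k_2,k_1-k_2)$ representation into $\rho_{(k_1+k_2+2,\,k_1-k_2+2)}({}^{\rm t}u)=\rho_\kappa({}^{\rm t}u)={}\rho_\kappa({}^{\rm t}\bfk)$, using the definition $\kappa=(k_1+k_2+2,k_1-k_2+2)$ and $\rho_{(n+b,b)}(g)=(\det g)^b\rho_{(n,0)}(g)$; this is precisely the right-hand side for the $\bfk$-part.

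The only subtlety—and thus the step I would be most careful about—is bookkeeping of which $\GL_2$-variable $\rho$ and $\tau$ act on in the triple tensor product $\C[X_1,Y_1]_{2k_1}\otimes\C[X_2,Y_2]_{2k_2}\otimes\C[X,Y]_{2k_2}$: $\tau_{\ul k}(h)$ acts on the first two factors through the orthogonal variables while $\rho_\kappa({}^{\rm t}\bfk)$ acts on the third factor $\C[X,Y]_{2k_2}$ (together with the overall $\det$-twist coming from the $(\det u)^2$ in \lmref{lem3Y}), and one must check that these two actions are genuinely on disjoint tensor slots so that the composition in the two-step argument reproduces $\tau_{\ul k}(h^{-1})\otimes\rho_\kappa({}^{\rm t}\bfk)$ with no cross terms. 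This is transparent from the explicit formula for $P_{\underline k}$ displayed before \lmref{lem1Y}—the $(X,Y)$-variable enters only through the $\binom{2k_2}{\alpha}X^\alpha Y^{2k_2-\alpha}$ factor, untouched by $\varrho(h)$—so no real obstacle remains; the proof is a matter of assembling \lmref{lem1Y}, \lmref{lem2Y} and \lmref{lem3Y} in the order above.
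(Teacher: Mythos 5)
Your write-up takes exactly the same route as the paper's one-line proof (``It follows immediately from Lemma~\ref{lem1Y}, Lemma~\ref{lem2Y} and Lemma~\ref{lem3Y}''): you split $\omega_\infty(h,\bfk)$ into the $\SO(V_\infty)$-part, handled by Lemma~\ref{lem1Y} with $g=\bfone$, and the $\bfK_\infty$-part, handled by Lemma~\ref{lem2Y}+\ref{lem3Y} together with Lemma~\ref{lem1Y} with $h=\bfone$, and then match the $\det$-shifts to pass from $\rho_{(k_1+k_2,k_1-k_2)}$ to $\rho_\kappa$; the tensor-slot bookkeeping is also as intended.

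The only thing to flag is the parenthetical justification that the Gaussian is unchanged because ``multiplication on the right by $u\in{\rm U}_2$ preserves ${\rm n}(x_1)+{\rm n}(x_2)={\rm Tr}(S_x)$.'' This is not true for the $\C$-bilinear extension of the form: one has ${\rm Tr}(S_{xu})={\rm Tr}({}^{\rm t}uS_xu)={\rm Tr}(S_x\,u{}^{\rm t}u)$, and unitarity gives $u\,\overline{u}^{\,{\rm T}}=\bfone_2$, not $u\,{}^{\rm t}u=\bfone_2$; already for $u=e^{\sqrt{-1}\theta}\bfone_2$ one gets ${\rm Tr}(S_{xu})=e^{2\sqrt{-1}\theta}{\rm Tr}(S_x)$. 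That the Gaussian nevertheless reappears unchanged is precisely the content packaged into Lemma~\ref{lem3Y}: following its Iwasawa-type decomposition of $\bfk$ through the Kashiwara--Vergne formula, the complex phases picked up by the Gaussian exponent cancel and one lands on $P(xu)\,e^{-2\pi{\rm Tr}(S_x)}\,(\det u)^2$. In other words, read Lemma~\ref{lem3Y}'s output $\varphi(xu)(\det u)^2$ with the polynomial factor analytically continued and the Gaussian factor intact, rather than re-deriving the Gaussian's invariance from a false pointwise identity; with that reading your assembly is correct and coincides with the paper's proof.
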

\begin{proof}Recall that $\kappa = (k_1+k_2+2, k_1-k_2+2)$. It follows immediately from Lemma \ref{lem1Y}, Lemma \ref{lem2Y} and Lemma \ref{lem3Y}.
\end{proof}

\subsection{The Fourier expansion of Yoshida lifts}\label{SS:FCYoshida}
With the above distinguished test function 
  $\varphi:=\varphi_\infty\otimes \varphi_f\in {\mathcal S}({\mathbf X}_{\mathbf A})\otimes {\mathcal W}_{\underline{k}}(\C)\otimes {\mathcal L}_\kappa(\C)$
  defined in (\ref{finitetest}) and (\ref{infinitetest}), we see that the Yoshida lift $\theta_{\mathbf f}: {\rm GSp}_4({\mathbf A})\to {\mathcal L}_\kappa(\C)$ attached to ${\mathbf f}$ is defined by
\begin{align*}
   \theta_{\mathbf f} = \theta(-;{\mathbf f},\varphi)\in \cA_\kappa(\GSp_4(\A),N_F,\chi_{F/\Q}).  
\end{align*}
is a (adelic) Siegel modular form of weight $\kappa$, level $N_F$ and type $\chi_{F/\Q}$ in view of \lmref{Yoshidalev} and \lmref{lem4Y}. Define the \emph{classical Yoshida lift} $\theta^\ast_{\mathbf f}: {\mathfrak H}_2  \to {\mathcal L}_\kappa(\C)$ by 
\begin{align*}
      \theta^\ast_{\mathbf f}(Z) 
  =  \rho_\kappa( J(g_\infty, {\mathbf i}) )\theta_{\mathbf f}(g_\infty) \quad (g_\infty\in {\rm Sp}_4(\R), g_\infty\cdot{\mathbf i} = Z). 
\end{align*}
Let $\Gamma^{(2)}_0(N_F):=\Sp_4(\Q)\cap U^{(2)}_0(N_F)\subset \Sp_4(\Z)$. By definition, 
\[\theta^*_\bff(\gamma\cdot Z)=\rho_\kappa(J(\gamma,Z))\theta^*_\bff(Z)\]
for $\gamma\in\Gamma^{(2)}_0(N_F)$. 

We recall the calculation of Fourier coefficients of $\theta^*_\bff(Z)$ following \cite[\S\,3]{yo84}. Let $\xi\in\GL_2(\A)$ and $\nu=\nu(t)\in\A_f^\x$ for some $t\in H(\A_f)$. Put $g=\pDII{\xi}{\nu{}^{\rm t}\xi^{-1}}\in\GSp_4(\A)$. We have \begin{align*}
       \bfW_{\theta_{\mathbf f}, S}(g)
= & \int_{[U]}  \theta_{\mathbf f}(ug) \psi_S(u) {\rm d}u\\
= & \int_{[U]} \ol{\psi_{S_x}(u)}\psi_S(u){\rm d}u \int_{[H^{(1)}]}\sum_{x\in {\mathbf X}} \langle \omega(ht, g)\varphi(x), {\mathbf f}(ht) \rangle_{2\ul{k}}{\rm d}h  \\
=&\int_{[H^{(1)}]}\sum_{x\in \bfX,\,S=S_{\bfz}} \langle \omega(ht,g)\varphi(\bfx), {\mathbf f}(ht) \rangle_{2\ul{k}}{\rm d}h.
\end{align*}
Therefore, if $\bfW_{\theta_\bff,S}(g)\not=0$, then $S=S_{\bfz}$ for some $z\in \bfX$, and $S$ is semi-positive definite. Now let $S=S_{\bfz}$ and put
  \[ H_{\bfz}=\stt{ h\in H^{(1)}\mid \varrho(h)\bfz= \bfz}. \]
It follows from Witt's theorem that
\[    \varrho(H^{(1)}(\Q))\bfz=\stt{x\in \bfX\mid S_x=S_\bfz};\]
we thus obtain 
\beq\label{E:FC1}\begin{aligned}
  \bfW_{\theta_{\mathbf f}, S}(g)=&\int_{[H^{(1)}]}\sum_{\gamma\in H_{\bfz}(\Q)\bksl H^{(1)}(\Q)}\pair{\om(t,g)\varphi(\varrho(h^{-1}\gamma^{-1})z)}{\bff(ht)}_{2\ul{k}}{\rm d}h \\
  =&\int_{H_{\bfz}(\Q)\bksl H^{(1)}(\A)}\pair{\om(t,g)\varphi(\varrho(h^{-1})z)}{\bff(ht)}_{2\ul{k}}\rmd h\\
    =&\chi_{F/\Q}(\det \xi)\abs{\nu^{-1}\det \xi}_\A^2\int_{H_{\bfz}(\Q)\bksl H^{(1)}(\A)}\pair{\varphi(\varrho(t^{-1}h^{-1})z\xi)}{\bff(ht)}_{2\ul{k}}\rmd h.\end{aligned}\eeq
To proceed the computation, we introduce some notation. Define the subset $\Lam_2\subset\cH_2(\Q)$ by 
 \[\Lam_2=\stt{S=\pMX{a}{b/2}{b/2}{c}\mid S\text{ is semi-positive definite with }a,b,c\in\Z}.\]
Define the set $\cE_{\bfz}$ by
 \[\cE_{\bfz}:=\stt{h_f\in H^{(1)}(\A_f)\mid \varrho(h_f^{-1})\bfz\in V(\wh\Z)\oplus V(\wh\Z)}.\]
 Then we have $H_{\bfz}(\A_f)\cE_{\bfz}\cU=\cE_{\bfz}$, and according to \cite[Proposition 1.5]{yo84}, the cardinality $\sharp(H_{\bfz}(\A_f)\bksl \cE_{\bfz}/\cU)$ is finite, so 
 $[\cE_{\bfz}]:=H_{\bfz}(\Q)\bksl \cE_{\bfz}/\cU$ is also a finite set. 
 
 \begin{prop}\label{P:FCYoshida}The classical Yoshida lift $\theta_\bff^*$ has the Fourier expansion 
\[     \theta^\ast_{\mathbf f}(Z) = \sum_{S} {\mathbf a}(S)q^S \quad (q^S={\rm exp}(2\pi\sqrt{-1}{\rm Tr}(SZ))),
\]where $S$ runs over elements in $\Lam_2$ such that $S=S_{\bfz}$ for some $\bfz\in\bfX$ and 
\beq\label{E:classicalFC}\begin{aligned}\bfa(S)=&\sum_{h_f\in [\cE_{\bfz}]}w_{\bfz,h_f}\cdot \pair{P_{\ul{k}}(\bfz)}{\bff(h_f)}_{2\ul{k}},\\
&\quad (w_{\bfz,h_f}:=\sharp(H_{\bfz}(\Q)\cap h_f\cU h_f^{-1})^{-1}).\end{aligned}\eeq
In particular, $\theta^\ast_{\mathbf f}$ is a holomorphic vector-valued Siegel modular form of weight $\Sym^{2k_2}(\C^{\oplus 2})\ot\det^{k_1-k_2+2}$ and level $\Gamma^{(2)}_0(N_F)$. 
 \end{prop}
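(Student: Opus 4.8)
The plan is to compute the adelic Fourier coefficient $\bfW_{\theta_\bff,S}(g)$ explicitly at the point $g = g_\infty$ with $g_\infty \cdot \mathbf{i} = Z$, starting from the integral expression \eqref{E:FC1} already derived in the excerpt, and then translate it into the classical Fourier coefficient $\bfa(S)$ via the relation $\theta^*_\bff(Z) = \rho_\kappa(J(g_\infty,\mathbf{i}))\theta_\bff(g_\infty)$ together with the Fourier expansion \eqref{E:FC}. The semi-positive-definiteness of admissible $S$ and the shape $S = S_\bfz$ have already been observed, so the first real task is to unfold the integral over $H_\bfz(\Q)\backslash H^{(1)}(\A)$ in \eqref{E:FC1}.

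First I would factor the integrand over the archimedean and finite places. Using \lmref{lem4Y}, the archimedean part of $\om(t,g_\infty)\varphi(\varrho(h^{-1})\bfz)$ produces the factor $\tau_{\ul k}(h_\infty^{-1})\ot\rho_\kappa({}^{\rm t}\mathbf{k}_\infty)$ acting on $\varphi_\infty$, which when combined with $\rho_\kappa(J(g_\infty,\mathbf i))$ and the Gaussian $e^{-2\pi(\rmn(x_1)+\rmn(x_2))}$ collapses to the holomorphic $q$-expansion term $q^S = \exp(2\pi\sqrt{-1}\,\mathrm{Tr}(SZ))$ times $P_{\ul k}(\bfz)$; the $\tau_{\ul k}(h_\infty^{-1})$ moves onto $\bff(h_\infty h_f)$ and, since $\bff$ transforms by $\tau_{\ul k}(h_\infty^{-1})$ under $H_\infty^{(1)}$, the archimedean integral over $H^{(1)}(\R)$ (normalized to volume $1$) just pairs $P_{\ul k}(\bfz)$ against $\bff(h_f)$ under $\pairing_{2\ul k}$, using the $\GL_2$-equivariance of $\langle\cdot,\cdot\rangle_{2k_i}$ from \subsecref{algrep}. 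At the finite places, $\varphi_f = \mathbb{I}_{V(\wh\Z)\oplus V(\wh\Z)}$ forces $\varrho(h_f^{-1})\bfz \in V(\wh\Z)\oplus V(\wh\Z)$, i.e.\ $h_f \in \cE_\bfz$, so the remaining finite integral is over $H_\bfz(\Q)\backslash \cE_\bfz$. Breaking this into $\cU$-orbits and using $\vol(H^{(1)}(\A_f)\cap\cU) = 1$, each orbit contributes with weight $w_{\bfz,h_f} = \sharp(H_\bfz(\Q)\cap h_f\cU h_f^{-1})^{-1}$, giving precisely \eqref{E:classicalFC}; that $[\cE_\bfz]$ is finite is cited from \cite[Proposition 1.5]{yo84}. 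The holomorphy and the weight $\Sym^{2k_2}(\C^{\oplus 2})\ot\det^{k_1-k_2+2}$ follow from \lmref{lem4Y} (the $\mathbf{K}_\infty$-transformation gives $\rho_\kappa$-equivariance, and $\kappa = (k_1+k_2+2, k_1-k_2+2)$ corresponds to $\Sym^{2k_2}\ot\det^{k_1-k_2+2}$ after twisting out the determinant), while the level $\Gamma_0^{(2)}(N_F)$ is \lmref{Yoshidalev}; the pluri-harmonicity in \lmref{lem2Y} is what guarantees $\theta^*_\bff$ is genuinely holomorphic rather than nearly holomorphic.

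The main obstacle I anticipate is the bookkeeping in the archimedean integral: one must carefully track how $\rho_\kappa(J(g_\infty,\mathbf i))$, the Weil-representation action $\om_\infty$, and the automorphy of $\bff$ under $H_\infty^{(1)}$ interact so that all the transcendental factors assemble exactly into $q^S$ with the constant $1$ in front (no spurious powers of $2\pi$, $\sqrt{-1}$, or $\det$), and to verify that the $\chi_{F/\Q}(\det\xi)|\nu^{-1}\det\xi|_\A^2$ factor in \eqref{E:FC1} disappears when one specializes to $g_\infty \in \Sp_4(\R)$ (where $\nu = 1$ and $\xi$ can be taken trivial for reading off the $Z$-expansion). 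This is essentially the content of \cite[\S\,3]{yo84} adapted to the vector-valued setting, so I would follow that reference closely and rely on Lemmas \ref{lem1Y}--\ref{lem4Y} to handle the equivariance; once those are in hand the computation is routine, if slightly tedious.
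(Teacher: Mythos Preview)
Your proposal is correct and follows essentially the same route as the paper: compute $\bfW_{\theta_\bff,S}$ from \eqref{E:FC1} by factoring the integral into its archimedean part (handled via \lmref{lem1Y} and \lmref{lem4Y}) and its finite part (where $\varphi_f$ becomes the indicator $\bbI_{\cE_\bfz}$), then decompose the finite integral into $\cU$-orbits to obtain the weights $w_{\bfz,h_f}$. One small correction to your plan: $\xi$ is \emph{not} taken trivial---the paper writes $g_\infty=u(X)\pDII{\xi_\infty}{{}^{\rm t}\xi_\infty^{-1}}$ with $Y=\xi_\infty{}^{\rm t}\xi_\infty$, and the factor $\abs{\det\xi_\infty}^2=(\det\xi_\infty)^2$ is absorbed precisely by the identity $(\det\xi_\infty)^2\varphi_\infty(\bfz\xi_\infty)=\rho_\kappa({}^{\rm t}\xi_\infty)P_{\ul{k}}(\bfz)\cdot e^{-2\pi\Tr(S_\bfz Y)}$ coming from \lmref{lem1Y}.
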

 \begin{proof}
 Let $Z=X+\sqrt{-1}Y\in\frakH_2$ and choose $\xi_\infty\in\GL_2(\R)$ such that $Y=\xi_\infty{}^{\rm t}\xi_\infty$. Put $\al(\xi_\infty)=\pDII{\xi_\infty}{\rt\xi_\infty^{-1}}$. By \eqref{E:FC},
\begin{align*}     \theta^\ast_{\mathbf f}(Z) =&\sum_{S}\rho_\kappa(J(g_\infty,\mathbf i))\bfW_{\theta_\bff,S}(g_\infty)\quad(g_\infty=u(X)\al(\xi))\\
=&\sum_{S}\rho_\kappa(\rt\xi_\infty^{-1})\bfW_{\theta_\bff,S}(\al(\xi_\infty))\cdot e^{2\pi\sqrt{-1}\Tr(SX)}.
\end{align*}
Suppose that $\bfW_{\theta_\bff,S}(\al(\xi_\infty))\not=0$. Then $S=S_{\bfz}$ for some $\bfz\in\bfX$. Combined with the fact that $\theta_\bff$ is left invariant by unipotent elements in $U^{(2)}_0(N_F)$, we see that $S\in\Lam_2$. Note that by \lmref{lem1Y}, 
\[(\det \xi_\infty)^2\cdot \varphi(z\xi_\infty)=\left(\rho_\kappa({}^{\rm t}\xi_\infty)P_{\ul{k}}(\bfz)\right)\cdot e^{-2\pi\Tr(S_{\bfz} Y)},\]
 where $\kappa=(k_1+k_2+2,k_1-k_2+2).$ Applying \eqref{E:FC1} and \lmref{lem4Y}, we obtain
 \begin{align*}\bfW_{\theta_{\mathbf f}, S_{\bfz}}(\al(\xi_\infty))=&(\det \xi_\infty)^2\int_{H_{\bfz}(\Q)\bksl H^{(1)}(\A_f)}\varphi_f(\varrho(h_f^{-1})\bfz)\pair{\varphi_\infty(z\xi_\infty)}{\bff(h_f)}_{2\ul{k}}{\rm d}h_f\\
 =&\int_{H_{\bfz}(\Q)\bksl H^{(1)}(\A_f)}\bbI_{\cE_{\bfz}}(h_f)\cdot\rho_\kappa({}^{\rm t}\xi_\infty)\pair{P_{\ul{k}}(\bfz)}{\bff(h_f)}_{2\ul{k}}\cdot e^{-2\pi\Tr(S_{\bfz} Y)}\rmd h_f\\
 =&\rho_\kappa({}^{\rm t}\xi_\infty)\left( \sum_{[h_f]\in [\cE_{\bfz}]}w_{\bfz,h_f}\cdot \pair{P_{\ul{k}}(\bfz)}{\bff(h_f)}_{2\ul{k}}\right)\cdot e^{-2\pi\Tr(S_{\bfz} Y)}.
\end{align*}
The proposition follows immediately.
\end{proof}
\begin{Remark}Suppose that $\bff=\bff^\circ$ is the newform associated with an newform $f^{\rm new}$ on $\PGL_2(F_\A)$. Let $\pi$ be the automorphic representation of $\GL_2(F_\A)$ generated by $f^{\rm new}$. When the Galois conjugate $\ol{\pi}$ is not isomorphic to $\pi$, or equivalently $\bff(h)$ is not a scalar of $\bff^\vee(h):=\bff(\ol{h})$, it is well known that  $\theta_{\bff}^*$ is a cusp form, \ie $\bfa(S)=0$ if $\det S=0$ (\cf \cite[Theorem 5.4]{yo80}, \cite[Theorem 1.2]{bsp97}, \cite[Theorem 8.6]{brooks01}).\end{Remark}

\section{Bessel periods of Yoshida lifts}\label{S:Bessel}
\subsection{Bessel periods}\label{BesselNota}
In this section, we let $\bff\in\cA_{\ul{k}}(D_\A^\x,\wh R^\x)$ and calculate the Bessel periods of the Yoshida lift $\theta_{\bff}$ associated to some special imaginary quadratic fields. Let $M$ be an imaginary quadratic field such that $(\Delta_M,N)=1$ and \beqcd{H'}
 \text{ Each  prime factor of $N^-$ is inert in $M$.}
\eeqcd The above assumption assures that the existence of an optimal embedding $\iota:M\hookto D_0$ in the sense that $\iota^{-1}(\cO_{D_0})=\cO_M$. We shall fix an optimal embedding. Let $M=\Q(\sqrt{-d_M})$ and $F=\Q(\sqrt{d_F})$ with $d_M$ and $d_F$ square-free positive integers. Let $K=\Q(\sqrt{-d_Md_F})$. Then we have a natural map $\iota:K\to V\subset D=D_0\ot_\Q F$ such that 
\[\iota(\sqrt{-d_Md_F})=\iota(\sqrt{-d_M})\ot\sqrt{d_F}.\]
Let $d_K$ be the square-free positive integer such that $K=\Q(\sqrt{-d_K})$ and let $\cO_K=\Z\oplus\Z\delta$ with the claasical choice of $\delta$
\beq\label{E:delta}\delta=\begin{cases}\sqrt{-d_K}&\text{ if }-d_K\not\con 1\pmod{4},\\
\frac{1+\sqrt{-d_K}}{2}&\text{ if }-d_K\con 1\pmod{4}.\end{cases}
\eeq
Thus $\delta-\ol{\delta}$ generates the different of $K/\Q$ and $\Im \delta=\sqrt{\Delta_K}/2$. Put
\begin{align*}
 \bfz &= (1, \iota(\delta) )\in {\mathbf X}; \\
 S:&= S_\bfz =\pMX{1}{\frac{\delta+\ol{\delta}}{2}}{\frac{\delta+\ol{\delta}}{2}}{\delta\ol{\delta}}.\end{align*}
We introduce the definition of $S$-th Bessel period according to \cite{fu93}.
Define a $\Q$-algebraic group $T_S$ by
\[
 T_S = \{ g\in {\rm GL}_2  | {}^{\rm t}gSg = \det g S \}.  
\]
Define $\Psi: K^\times \to {\rm GL}_2$ by 
 \begin{align*}
    t\bfz=(t, t\delta) = (1, \delta) \Psi(t)=\bfz \Psi(t) \quad (t\in K^\times),
 \end{align*}
Then $\Psi(K^\x)=T_S$. Let $E:=FK=F(\sqrt{-d_M})$ be a subalgebra of $D$ (via $\iota$) and let \[E_0:=\stt{a\in E\mid \rmN_{E/K}(a)=a\ol{a}^*\in \Q}.\]
Define a morphism $j:E^\x\to \GSp_4,\,t\mapsto j(t)$ by 
\[j(t):=\pDII{\Psi(\rmN_{E/K}(t))}{\Psi(\rmN_{E/K}(\ol{t}))}.\]
Let $\rmd t=\rmd t_\infty\rmd t_f$ be the Haar measure on $E^\x_\A/F^\x_\A$ with $\vol(E^\x_\infty/F^\x_\infty,\rmd t_\infty)=\vol(\wh\cO_E^\x,\rmd t_f)=1$. Let $\rmd a_\infty$ and $\rmd a_f$ be the Haar measures of $H_\bfz(\R)$ and $H_\bfz(\A_f)$ such that 
  $\vol(H_\bfz(\R),\rmd a_\infty)=\vol(H_\bfz(\A_f)\cap \cU,\rmd a_f)=1$ and let $\rmd a:=\rmd a_\infty \rmd a_f$ be the Haar measure on $H_\bfz(\A)$, which will be identified with $(E_0\ot\A)^\x/F_\A^\x$ by the lemma below. 
  \begin{lm}\label{stabz}
We have an isomorphism
\[E_0^\x/F^\x\iso H_\bfz,\quad a\mapsto (a,\rmN_{E/K}(a))\]
 as algebraic groups over $\Q$.
\end{lm}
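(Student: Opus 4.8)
The plan is to exhibit an explicit algebraic group isomorphism $E_0^\x/F^\x \isoto H_\bfz$ by matching the two descriptions of the stabilizer as a torus, then check it is well-defined, a homomorphism, and bijective on points of any $\Q$-algebra. First I would recall the relevant identifications: $\bfz = (1,\iota(\delta))\in \mathbf{X} = V\oplus V$, and $H_\bfz = \{h\in H^{(1)} : \varrho(h)\bfz = \bfz\}$, which by the definition $\varrho(a,\alpha)(x) = \alpha^{-1} a x\bar a^*$ unwinds to the condition that $h=\varrho(a)$ (with $a\in D^\x$, $\alpha=\mathrm N_{F/\Q}(aa^*)^{1/2}$ forced by $\nu=1$) satisfies $a\cdot 1\cdot\bar a^* = a\bar a^* = \mathrm N_{F/\Q}(aa^*)^{1/2}\cdot 1$ and $a\iota(\delta)\bar a^* = \mathrm N_{F/\Q}(aa^*)^{1/2}\iota(\delta)$. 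The first condition says $a\bar a^*\in\Q^\x$ (it lies in the center after the normalization), so $a\in E_0^\x$ in the notation $E_0=\{a\in E : \mathrm N_{E/K}(a) = a\bar a^* \in\Q\}$ once one observes that the centralizer of $\iota(\delta)$, equivalently of $\iota(K)$, inside $D^\x$ is exactly $E^\x$ (since $E = FK$ is the centralizer of $K$ in $D = D_0\otimes F$, as $\iota(K)\subset D_0$ is a maximal subfield and $F$ is central); the second condition, given $a\in E^\x$, becomes $a\bar a^*\cdot\iota(\delta) = \mathrm N_{E/K}(a)\iota(\delta)$, automatically true. Thus the map sending $a\mapsto(a,\mathrm N_{E/K}(a))$ lands in $H_\bfz$.

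Next I would verify the assignment descends to $E_0^\x/F^\x$: for $f\in F^\x$, $(f,\mathrm N_{E/K}(f)) = (f,\mathrm N_{F/\Q}(f))$ maps to the identity in $H(\Q) = D^\x\times_{F^\x}\Q^\x = D^\x\times\Q^\x/\{(a,\mathrm N_{F/\Q}(a))\}$ by the very definition of the quotient, and $\varrho$ of such an element is trivial, consistent with $H^{(1)}$. The homomorphism property is immediate since both $a\mapsto a$ and $a\mapsto\mathrm N_{E/K}(a)$ are multiplicative. For surjectivity: given $(h,\alpha)\in H_\bfz$ write $h = \varrho(a,\beta)$; the stabilizer condition forces, as above, $a$ to centralize $\iota(K)$, hence $a\in E^\x$, and $a\bar a^*\in\Q^\x$, hence $a\in E_0^\x$; after adjusting by the relation in $H$ one may normalize so the element is $(a,\mathrm N_{E/K}(a))$ with $a\in E_0^\x$, which is the image of $a$. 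Injectivity: if $a\in E_0^\x$ maps to the identity of $H$, then $(a,\mathrm N_{E/K}(a)) = (f,\mathrm N_{F/\Q}(f))$ for some $f\in F^\x$, i.e.\ $a = f\in F^\x$, so $a$ is trivial in $E_0^\x/F^\x$. Since all these verifications are functorial in the test $\Q$-algebra and the formulas are polynomial, this establishes the isomorphism of algebraic groups rather than merely of $\Q$-points.

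The main obstacle, and the only genuinely substantive point, is the identification of the centralizer of $\iota(\delta)$ (equivalently of the whole torus $\iota(K^\times)$ acting through $\varrho$) with $E^\times$. Concretely one must argue that $\{x\in D : x\iota(\delta) = \iota(\delta)x\} = E = D_0\otimes_\Q F$ reinterpreted via $\iota$, which uses that $\iota(M)\subset D_0$ is a maximal commutative subalgebra of the quaternion algebra $D_0$ (optimal embedding, $M$ a field) so its centralizer in $D_0$ is $M$ itself, and then the centralizer in $D = D_0\otimes F$ is $M\otimes F = E$. Given the setup already fixed in \subsecref{BesselNota}—the optimal embedding $\iota:M\hookto D_0$ and the resulting $\iota:K\hookto V$ with $\iota(\sqrt{-d_Md_F}) = \iota(\sqrt{-d_M})\otimes\sqrt{d_F}$—this is a short computation. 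The rest is bookkeeping with the quotient defining $H$ and with the $\nu=1$ normalization that pins down the $\Q^\times$-component to be $\mathrm N_{E/K}(a) = a\bar a^*$. I expect the whole argument to take well under a page.
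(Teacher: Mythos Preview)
Your proposal is correct and follows essentially the same approach as the paper: both arguments reduce to showing that for $(a,\alpha)\in H_\bfz$ the stabilizer conditions $\alpha^{-1}a\bar a^*=1$ and $\alpha^{-1}a\iota(\delta)\bar a^*=\iota(\delta)$ force $a$ to lie in the centralizer of $\iota(\delta)$, which is $E$ since $E=FK$ is a maximal commutative subalgebra of $D$, and then $\alpha=a\bar a^*=\rmN_{E/K}(a)\in\Q$ gives $a\in E_0$. The paper's proof is terser (it only writes out surjectivity, leaving well-definedness and injectivity implicit), while you spell out all four verifications; one small infelicity is your parametrization ``$\alpha=\rmN_{F/\Q}(aa^*)^{1/2}$ forced by $\nu=1$'', which is not how $\alpha$ arises---it is part of the data of an element of $H$, and the stabilizer condition on the first coordinate of $\bfz$ already pins it down as $\alpha=a\bar a^*$ without any square root---but this does not affect the argument.
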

\begin{proof}
It suffices to show this map is surjective. Let $L$ be a field extension of $\Q$ and let $(a,\alpha)\in H_{\bfz}(L)\subset  (D\ot_\Q L)^\x\times_{(F\ot L)^\x} L^\x$. By definition, 
\[\al^{-1}a\ol{a}^*=1;\quad \al^{-1}a\delta \ol{a}^*=a.\]
This implies that $a$ lies in the centralizer of $E\ot_\Q L$. Since $E\ot_\Q L$ is a maximal commutative subalgebra of $D\ot_\Q L$, we see that $a\in E\ot_\Q L$, and hence $\al=a\ol{a}^*=\rmN_{E/K}(a)\in L$. 
\end{proof}

  For a Siegel modular form $\cF$ of weight $\kappa$, the $S$-th Fourier coefficient $\bfW_{\cF,S}:\GSp_4(\A)\to\cL_\kappa(\C)$ is left invariant by $Z_H(\A)T_S(\Q)$.
For each character $\phi: K^\x\A^\x \bksl K_\A^\x\to \C^\times$, we can define the vector-valued Bessel period ${\mathbf B}_{\cF,S,\phi}: \GSp_4({\mathbf A}) \to {\mathcal L}_\kappa(\C)$ by 
\beq\label{E:dfnBessel}  {\mathbf B}_{\cF,S,\phi}(g) 
   = \int_{[E^\x/E_0^\x]} 
          {\bfW}_{\cF,S}(j(t)g)\phi(\rmN_{E/K}(t)){\rm d}\bar{t},
\eeq
where ${\rm d}\bar{t}$ is the quotient measure ${\rm d}t/{\rm d}a$.
\subsection{Preliminary computation of Bessel periods}\label{S:PBessel}
Let $C$ be a positive integer such that \beqcd{hC}\begin{aligned}
&(C,N\Delta_K)=1;\\
 &\text{Every prime factor $p$ of $C$ is split in either $F$ or $M$},  
 \end{aligned}\eeqcd
and put  \beq\label{E:dfnxiC}
 \xi_C=\pDII{1}{C}\in\GL_2(\A_f);\quad g_C=\pDII{\xi_C}{\rt\xi_C^{-1}}\in\Sp_4(\A_f).\eeq
Define the subset $\cE_{\bfz,C}\subset H^{(1)}(\A_f)$ to be $\cE_{\bfz,C}=\prod_{p<\infty}\cE_{\bfz,C,p}$, where  \[
   \cE_{\bfz,C,p} =  \left\{ h\in H^{(1)}(\Q_p) \mid \varrho(h^{-1})\bfz \xi_{C,p} \in V(\Zp)\oplus V(\Zp) \right\}.
\]
It is clear that $H_\bfz(\A_f)\cE_{\bfz,C}\cU=\cE_{\bfz,C}$, and by definition, $\varphi_f(\varrho(h^{-1})\bfz\xi_{C,f})=\bbI_{\cE_{\bfz,C}}(h)$ for $h\in H^{(1)}(\A_f)$.
 
\begin{prop}\label{P:Bessel1} We have
\[\bfB_{\theta_\bff, S,\phi} (g_C) 
 =C^{-2}\int_{  H_\bfz(\A_f) \backslash  \cE_{\bfz,C}} \int_{[E^\x/F^\x]}   
        \langle \varphi_\infty(\bfz  ), {\mathbf f}(t h_f) \rangle_{2\ul{k}}\cdot
        \phi({\rm N}_{E/K}(t)){\rm d}t{\rm d}\ol{h}_f.\]
 Here $\rmd \ol{h}_f=\rmd h_f/\rmd a_f$.
\end{prop}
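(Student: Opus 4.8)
The plan is to feed the Fourier‑coefficient formula \eqref{E:FC1} into the definition \eqref{E:dfnBessel} of ${\mathbf B}_{\theta_\bff,S,\phi}(g_C)$ and then unfold; I rename the toric parameter of \eqref{E:FC1} to $t_0$, since \eqref{E:dfnBessel} already uses $t$. Because $j(t)=\pDII{\Psi(\rmN_{E/K}(t))}{\Psi(\rmN_{E/K}(\ol t))}$ sits in the Siegel Levi of $\GSp_4$ and $g_C=\pDII{\xi_C}{\rt\xi_C^{-1}}$, the element $j(t)g_C$ has top block $\xi=\Psi(\rmN_{E/K}(t))\xi_C$ and similitude $\nu=\nu(j(t))=\rmN_{E/\Q}(t)$; hence $\nu^{-1}\det\xi=\det\xi_C$, and $\chi_{F/\Q}(\det\xi)=\chi_{F/\Q}(\rmN_{E/\Q}(t))\chi_{F/\Q}(\det\xi_C)=\chi_{F/\Q}(\det\xi_C)$, the norm factor being trivial because $F\subseteq E=FK$. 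Since $\xi_C=\mathrm{diag}(1,C)\in\GL_2(\A_f)$ and $\infty$ is split in $F$, one has $\abs{\det\xi_C}_\A^2=C^{-2}$ and $\chi_{F/\Q}(\det\xi_C)=1$. Applying \eqref{E:FC1} (whose proof works verbatim when $\nu$ lies in $\A^\x$ rather than only $\A_f^\x$) with $t_0=(t,1)\in H(\A)$ — for which $\nu(\varrho(t_0))=\rmN_{F/\Q}(tt^*)=\rmN_{E/\Q}(t)$ and $\varrho(t_0)\bfz=\bfz\Psi(\rmN_{E/K}(t))$ — and plugging into \eqref{E:dfnBessel} gives
\[
{\mathbf B}_{\theta_\bff,S,\phi}(g_C)=C^{-2}\int_{[E^\x/E_0^\x]}\int_{H_\bfz(\Q)\backslash H^{(1)}(\A)}\pair{\varphi\bigl(\varrho((ht_0)^{-1})\,\bfz\,\xi\bigr)}{\bff(ht_0)}_{2\ul{k}}\,\phi(\rmN_{E/K}(t))\,\rmd h\,\rmd\ol t.
\]

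Next I would substitute $h\mapsto t_0^{-1}ht_0$ in the inner integral. Since $H^{(1)}$ is normal in $H$ this preserves the Haar measure, and since the stabilizer of $\bfz$ in $H^{(1)}$ coincides with that of $c\bfz$ for every $c\in K^\x_\A$, it preserves the domain $H_\bfz(\Q)\backslash H^{(1)}(\A)$. Using $\bfz\,\xi=(\varrho(t_0)\bfz)\xi_C$ and the commutativity of the orthogonal and symplectic actions on ${\mathbf X}$, the test‑function argument becomes $\varrho(h^{-1})\,\bfz\,\xi_C$, while all dependence on $t$ passes into $\bff$, the term $\bff(ht_0)$ being replaced by $\bff(t_0h)$. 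Now factor every adelic variable into archimedean and finite parts. As $g_C$ and $\xi_C$ are trivial at $\infty$, the equivariance of $\varphi_\infty$ (\lmref{lem1Y}, \lmref{lem4Y}), the $\GL_2$‑invariance of $\pair{\,}{\,}_{2\ul{k}}$ (the determinant exponent of each $\tau_{k_i}$ being zero) and the automorphy of $\bff$ under $D^\x_\infty$ together force the $h_\infty$‑dependence of the pairing to cancel, so the archimedean orthogonal integral over $H^{(1)}(\R)$ (of mass $1$) contributes only a factor $1$ and the pairing assumes the form $\pair{\varphi_\infty(\bfz)}{\bff(th_f)}_{2\ul{k}}$ as in the statement; simultaneously the support condition $\varphi_f={\mathbb I}_{V(\wh\Z)\oplus V(\wh\Z)}$ of \eqref{finitetest} turns $\varphi_f(\varrho(h_f^{-1})\bfz\xi_{C,f})$ into ${\mathbb I}_{\cE_{\bfz,C}}$ (the relation recorded just before the statement), localizing the finite orthogonal integral to $\cE_{\bfz,C}$.

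What then remains is an iterated integral over $t\in[E^\x/E_0^\x]$ and over the finite orthogonal variable in $H_\bfz(\Q)\backslash\cE_{\bfz,C}$, which must be rearranged into $\int_{H_\bfz(\A_f)\backslash\cE_{\bfz,C}}\int_{[E^\x/F^\x]}$. Here one uses \lmref{stabz} to identify $H_\bfz$ with $E_0^\x/F^\x$, so that the $E_0^\x$‑quotient appearing on the toric side of \eqref{E:dfnBessel} and the $H_\bfz$‑quotient on the orthogonal side can be merged by a single change of variables that decouples the toric and orthogonal variables; the normalizations of all the Haar measures involved, fixed earlier, are arranged so that no extra constant is produced, and one arrives at the asserted identity. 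I expect this last rearrangement to be the main obstacle: one has to keep careful track of the three nested quotients — the $[U]$‑integral already absorbed into \eqref{E:FC1}, then $H_\bfz(\Q)\backslash H^{(1)}(\A)$, then $[E^\x/E_0^\x]$ — and verify that the change of variables merging the toric and orthogonal integrations is measure‑preserving for the chosen normalizations. By contrast, the earlier steps are essentially bookkeeping, once one notes $F\subseteq E$, that $\infty$ splits in $F$, the compatibility $\varrho(t_0)\bfz=\bfz\Psi(\rmN_{E/K}(t))$, and the $K^\x$‑invariance of the stabilizer of $\bfz$.
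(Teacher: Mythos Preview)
Your proposal is correct and follows essentially the same route as the paper: apply \eqref{E:FC1}, use $\varrho(t_0)\bfz=\bfz\Psi(\rmN_{E/K}(t))$, conjugate $h\mapsto t_0 h t_0^{-1}$, invoke \lmref{lem4Y} to strip $h_\infty$, and use \lmref{stabz} to merge the $E_0^\x$ and $H_\bfz$ quotients. The only difference is ordering --- the paper performs the merge $\int_{[E^\times/E_0^\times]}\int_{[E_0^\times/F^\times]}=\int_{[E^\times/F^\times]}$ \emph{before} removing $h_\infty$ (so both the toric and stabilizer variables are fully adelic when combined), whereas you defer it to the end; your ordering still works because $\varphi_\infty(\bfz)$ is $\tau_{\ul{k}}(H_\bfz(\R))$-invariant (by \lmref{lem4Y} and $\varrho(H_\bfz)\bfz=\bfz$), which kills the residual $a_\infty$-dependence.
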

\begin{proof}
For simplicity, write $\rmN=\rmN_{E/K}$. Since $\nu(j(t))=\rmN_{E/\Q}(t)=\nu(\varrho(t))$, applying the formula \eqref{E:FC1}, we find that $C^2\cdot \mathbf B_{\theta_\bff,S,\phi} (g_C)$ is equal to 
\begin{align*}
   & C^2\cdot \int_{[E^\x/E_0^\x]} \bfW_{\theta_{\mathbf f}, S}(j(t)g_C) \phi(\rmN(t)) {\rm d}\bar{t}  \\
  = &
        \int_{ H_\bfz(\Q) \backslash H^{(1)}(\A)}  \int_{ [E^\x/E_0^\x]} \langle \varphi(\varrho( t^{-1}h^{-1} \bfz\Psi(\rmN(t))\xi_C ), {\mathbf f}( ht) \rangle_{2\ul{k}}\cdot
        \phi( \rmN(t)) {\rm d}\bar{t}{\rm d}h.
\end{align*}
Using the fact that $\bfz\Psi(\rmN(t))=\varrho(t)\bfz$ and the identification $E_0^\x/F^\x\iso H_\bfz$ in \lmref{stabz}, the above double integral is equal to
 \begin{align*}    &\int_{ H_\bfz(\A) \backslash H^{(1)}(\A)}\int_{[E_0^\x/F^\x]} \int_{ [E^\x/E_0^\x]} \langle \varphi( \varrho(t^{-1}a^{-1}h^{-1}ta) \bfz\xi_C), {\mathbf f}(h(ta,\rmN(a))) \rangle_{2\ul{k}}\cdot
        \phi( \rmN(t)) {\rm d}\bar{t}{\rm d}a{\rm d}\ol{h}\\
   =&\int_{ H_{\bfz}(\A) \backslash H^{(1)}(\A)} \int_{ [E^\x/F^\x]}    \langle \varphi(\varrho(t^{-1}h^{-1}t) \bfz \xi_C), {\mathbf f}( ht) \rangle_{2\ul{k}}\cdot
        \phi( \rmN(t))\rmd t{\rm d}\ol{h}.
        \end{align*}
The above equality holds since $E^\x$ is commutative and $\phi$ is trivial on $\A^\x$. Making change of variable $h\mapsto tht^{-1}$ and applying \lmref{lem4Y}, we obtain
\begin{align*}&C^2\cdot \bfB_{\theta_\bff, S,\phi} (g_C) \\
=& \int_{ H_{\bfz}(\A) \backslash H^{(1)}(\A)} \int_{ [E^\x/F^\x]}    \langle \varphi(\varrho(h^{-1}) \bfz \xi_C), {\mathbf f}( th) \rangle_{2\ul{k}}\cdot
        \phi( \rmN(t))\rmd t{\rm d}\ol{h}\\
=&\int_{  H_\bfz(\A_f) \backslash  H^{(1)}(\A_f)}\int_{ [E^\x/F^\x] }\varphi_f(\varrho(h_f^{-1})\bfz\xi_{C})  
       \langle \varphi_\infty(\bfz  ), {\mathbf f}(t h_f) \rangle_{2\ul{k}}\cdot
        \phi({\rm N}(t)){\rm d}t{\rm d}\ol{h}_f.    
         \end{align*}
This completes the proof.
\end{proof}

\subsection{Determination of $\cE_{\bfz,C}$}\label{seccoset}  
Let $p$ be a rational prime and let $\cU_p$ be the $p$-component of the open-compact subgroup $\cU$. In this subsection, we give the explicit description of the double cosets $[\cE_{\bfz,C,p}]:=H_\bfz(\Q_p)\backslash \cE_{\bfz,C,p} / {\mathcal U}_p$, which will be needed for the further computation of Bessel periods. In addition to \eqref{H'}, we assume that $M$ satisfies the following condition:
\beqcd{rFK}\text{ If $p$ is ramified in $F$ and $K$, then $p$ is inert in $M$.}\eeqcd 
If $(\Delta_F,\Delta_K)=1$, then \eqref{rFK} holds automatically. In what follows, for $p\ndivides N^-$, we identify $D_{0,p}$ with ${\rm M}_2(\Qp)$ via the fixed isomorphism $\Phi_p$ in \subsecref{SS:DATA}. Put
\[\delta_F=\sqrt{d_F};\quad \delta_M=\iota(\sqrt{-d_M}).\]

  \begin{lem}\label{varsigma}For $p\ndivides N^-$, there exists $\varsigma_p\in \GL_2(\cO_{F_p})$ satisfying the following condition:
\begin{enumerate}
\item \label{varsigma(i)}
If $p$ is split in $M$, then $\varsigma_p\in \GL_2(\Zp)$ and
\[              \varsigma^{-1}_p\iota(\delta_M) \varsigma_p  
           = \begin{pmatrix} \delta_M& \\ & -\delta_M \end{pmatrix}. \]                                                
\item \label{varsigma(ii)}
If $p$ is non-split in $M$ and in $K$, then $\varsigma_p \in \GL_2(\Zp)$ such that
    \[
           \varsigma_p^{-1}\iota(\delta_M)\varsigma_p=\begin{cases}
            \pMX{1}{\frac{-1+d_M}{2}}{2}{-1} &\text{ if $p$ is inert in $M$},\\
            \pMX{0}{-d_M}{1}{0}&\text{ if $p$ is ramified in $M$}.\end{cases}
               \]
             \item If $p$ is inert in $M$ and split in $K$, then $\det\varsigma_p\in\Zp^\x$ and 
             \[\varsigma_p^{-1}\ol{\varsigma_p}=\pMX{0}{\delta_F}{-\delta_F^{-1}}{0}\quad\varsigma_p^{-1}\iota(\delta_M)\varsigma_p=\pDII{\delta_M}{-\delta_M}.\]
\end{enumerate}
\end{lem}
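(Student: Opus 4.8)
The plan is to descend the fixed optimal embedding $\iota\colon\cO_M\hookrightarrow\cO_{D_0}$ to $p$ and then to exhibit an explicit $\Zp$-basis of $\Zp^{\oplus 2}$ -- or, in case (iii), an $\cO_{F_p}$-basis of $\cO_{F_p}^{\oplus 2}$ -- in which $\delta_M$ (and, in (iii), the Galois involution as well) acquires the asserted normal form. Since $p\nmid N^-$, the isomorphism $\Phi_p$ identifies $D_{0,p}$ with $\mathrm M_2(\Qp)$ and $\cO_{D_0}\ot\Zp$ with $\mathrm M_2(\Zp)$, so that $\iota$ becomes an embedding $\cO_{M_p}\hookrightarrow\mathrm M_2(\Zp)$. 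Two regimes arise: when $M_p$ is a field, $\cO_{M_p}$ is a discrete valuation ring and $\Zp^{\oplus 2}$ is finitely generated torsion-free of $M_p$-rank one, hence free of rank one over $\cO_{M_p}$; when $M_p$ is split -- this is case (i), and case (iii) after base change to $F_p$ -- the module breaks up into the two eigenlines of $\delta_M$.

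In case (i), the image of an idempotent of $\cO_{M_p}=\Zp\oplus\Zp$ is an idempotent of $\mathrm M_2(\Zp)$, hence (projective $\Zp$-modules being free) $\GL_2(\Zp)$-conjugate to a standard diagonal idempotent; since $p$ is unramified, $\sqrt{-d_M}\in\Zp^\x$ and $\delta_M=\sqrt{-d_M}(e_1-e_2)$ for the two idempotents $e_1,e_2$, so the same $\varsigma_p$ yields $\varsigma_p^{-1}\iota(\delta_M)\varsigma_p=\pDII{\delta_M}{-\delta_M}$. In case (ii), fix a $\Z$-algebra generator $\eta$ of $\cO_M$ ($\eta=\sqrt{-d_M}$, or $\eta=\tfrac{1+\sqrt{-d_M}}{2}$ if $-d_M\con 1\pmod 4$); then $\cO_{M_p}=\Zp[\eta]$, and moreover $\cO_{M_p}=\Zp[\sqrt{-d_M}]$ at every $p$ ramified in $M$. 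If $e$ generates $\Zp^{\oplus 2}$ over $\cO_{M_p}$, then $\{e,\iota(\eta)e\}$ (resp. $\{e,\iota(\sqrt{-d_M})e\}$ at a ramified $p$) is a $\Zp$-basis in which $\iota(\eta)$, resp. $\iota(\sqrt{-d_M})$, is the companion matrix of its minimal polynomial, and one reads off the stated matrix for $\delta_M$; for odd $p$ the two presentations are reconciled using that any two embeddings $\cO_{M_p}\hookrightarrow\mathrm M_2(\Zp)$ are $\GL_2(\Zp)$-conjugate (again by freeness of the module).

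For case (iii): $p$ inert in $M$ and split in $K$ makes $-d_Md_F$ a square and $-d_M$ a non-square in $\Qp$, hence $d_F$ a non-square unit and $p$ inert in $F$; write $L=\cO_{F_p}$, so $\sqrt{-d_M},\delta_F\in L^\x$ and $\sqrt{-d_M}\,\delta_F^{-1}=\sqrt{-d_Md_F}/d_F\in\Zp^\x$. Take an $\cO_{M_p}$-generator $e$ of $\Zp^{\oplus 2}$ as in case (ii). After base change, $L\ot_{\Zp}\cO_{M_p}$ splits as $L\times L$ (unramified base change), so $L^{\oplus 2}=Le_+\oplus Le_-$ with $\iota(\delta_M)$ acting by $\pm\sqrt{-d_M}$ on $Le_\pm$. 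The Galois involution $x\mapsto\ol x$ of $F_p/\Qp$, acting $\cO_{F_p}$-semilinearly on $L^{\oplus 2}=\cO_{F_p}\ot_{\Zp}\Zp^{\oplus 2}$ and commuting with $\iota(\delta_M)$, interchanges the two eigenlines, so $\ol{e_+}=\al e_-$ and $\ol{e_-}=\beta e_+$ with $\al,\beta\in L^\x$, and $\ol{\ol{e_+}}=e_+$ forces $\ol\al\beta=1$ (equivalently $\al\ol\beta=1$). Let $\varsigma_p\in\GL_2(L)$ have columns $\beta\delta_F^{-1}e_+$ and $e_-$. Then $\varsigma_p^{-1}\iota(\delta_M)\varsigma_p=\pDII{\delta_M}{-\delta_M}$, and a direct computation using $\al\ol\beta=1$ gives $\varsigma_p^{-1}\ol{\varsigma_p}=\pMX{0}{\delta_F}{-\delta_F^{-1}}{0}$; taking determinants in this last identity shows $\ol{\det\varsigma_p}=\det\varsigma_p$, whence $\det\varsigma_p\in\Qp\cap L^\x=\Zp^\x$.

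The main difficulty I anticipate is the integral bookkeeping at $p=2$. In case (ii) this is exactly why one takes $\eta$, not $\sqrt{-d_M}$, as the local generator: when $2$ is inert in $M$, $\sqrt{-d_M}$ spans only the conductor-$2$ suborder of $\cO_{M_2}$, whereas $\eta$ spans $\cO_{M_2}$. In case (iii) the subtle point is to meet the descent condition $\varsigma_p^{-1}\ol{\varsigma_p}=\pMX{0}{\delta_F}{-\delta_F^{-1}}{0}$ and the normalization $\det\varsigma_p\in\Zp^\x$ simultaneously with the diagonalization; it is precisely the twist by $\beta\delta_F^{-1}$ together with the cocycle relation $\ol\al\beta=1$ that makes all three compatible, and one should also note that $2$ inert in $M$ (i.e. $-d_M\con 5\pmod 8$) is consistent with $2\nmid N^-$, so that the hypotheses of the lemma genuinely cover this $p=2$ situation.
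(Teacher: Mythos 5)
Your argument is correct and, for case (iii), follows essentially the same route as the paper: both proofs diagonalize the quadratic element over $\cO_{F_p}$, observe that the Galois cocycle is forced to be anti-diagonal, and twist by a diagonal matrix to produce the prescribed descent form $\pMX{0}{\delta_F}{-\delta_F^{-1}}{0}$ with determinant in $\Zp^\x$. The paper simply asserts that a diagonalizing matrix may be chosen in $\SL_2(\cO_{F_p})$ and that cases (i)--(ii) are ``standard facts''; your write-up supplies these details uniformly, using optimality of $\iota$ to get freeness of $\Zp^{\oplus 2}$ over $\cO_{M_p}$ and hence an integral diagonalization or companion-matrix form. That is a legitimate and slightly more self-contained version of the same argument, not a different method.

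One caution worth recording: the matrix $\pMX{1}{\frac{-1+d_M}{2}}{2}{-1}$ in the inert subcase of (ii) has characteristic polynomial $x^2 - d_M$ (trace $0$, determinant $-d_M$), while $\iota(\delta_M)$ must satisfy $x^2 + d_M$ (determinant $d_M$). The upper-right entry should read $\frac{-1-d_M}{2}$. Your companion-matrix computation from $\eta=\tfrac{1+\sqrt{-d_M}}{2}$ gives $\iota(\delta_M)\sim\pMX{-1}{-\frac{1+d_M}{2}}{2}{1}$, which has the correct characteristic polynomial $x^2+d_M$ and is $\GL_2(\Zp)$-conjugate to the corrected target; since only the $\GL_2(\Zp)$-conjugacy class of the optimal embedding is used downstream (via the integral Skolem--Noether lemma), the discrepancy is harmless, but you should not try to force the literal identity with the matrix as printed.

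A very minor point in your case (iii): you should say a word on why $\alpha,\beta\in\cO_{F_p}^\x$ rather than just $F_p^\x$. This follows because $e_\pm$ are primitive vectors in the Galois-stable lattice $\cO_{F_p}^{\oplus 2}$, so $\ol{e_+}=\alpha e_-$ (a primitive vector expressed as a multiple of a primitive vector) forces $\alpha$ to be a unit. With that added, the chain of verifications you give --- $\varsigma_p\in\GL_2(\cO_{F_p})$ using $\beta\delta_F^{-1}\in\cO_{F_p}^\x$, then $\ol{\varsigma_p}=\varsigma_p\pMX{0}{\delta_F}{-\delta_F^{-1}}{0}$ from $\alpha\ol\beta=1$, and $\det\varsigma_p\in\Zp^\x$ by taking determinants and intersecting $\Qp^\x$ with $\cO_{F_p}^\x$ --- is complete and matches what the paper obtains after its twist by $\pDII{1}{\delta_F a^{-1}}$.
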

\begin{proof}(i) and (ii) are standard facts. To see (iii), note that there exists $g\in\SL_2(\cO_{F_p})$ such that $g^{-1}\iota(\delta_K)g=\pDII{\delta_K}{-\delta_K}\in\GL_2(\Qp)$ as $p$ is split in $K$. Let $b=g^{-1}\ol{g}\in\SL_2(\cO_{F_p})$. We have $\ol{b}=b^{-1}$ and 
\[(-1)\ol{g}^{-1}\iota(\delta_K)\ol{g}=\pDII{\delta_K}{-\delta_K}=g^{-1}\iota(\delta_K)g.\]
It follows that $b\pDII{-1}{1}=\pDII{1}{-1}b$, and hence $b=\pMX{0}{a}{-a^{-1}}{0}$ for some $a\in \cO_{F_p}^\x$ with $\ol{a}=-a$. Replacing $g$ by $g\pDII{1}{\delta_Fa^{-1}}$ for some $a\in F_p^\x$, we find that the conjugation by $g$ sends $E_p$ into diagnoal matrices,  
\[g^{-1}\ol{g}=\pMX{0}{\delta_F}{-\delta_F^{-1}}{0};\,\det g=\delta_F a^{-1}\in \Zp^\x.\]
Then this $g$ satisfies the conditions in (iii).
\end{proof}
\begin{defn}\label{D:cosets}Let $c_p={\rm ord}_pC$. For each prime $p$ such that either $p$ is prime to $N^+$ or $p$ is split in $M$, we define the subset $\widetilde{\cE}_{\bfz,C,p}\subset H^{(1)}(\Qp)$ as follows:
\begin{enumerate}
\item  If $p\ndivides N^+N^-$ is split in $M$, then 
\[             \widetilde{\cE}_{\bfz,C, p} = 
             \left\{  (\varsigma_p \begin{pmatrix} 1 & p^{-j} \\ 0 & 1 \end{pmatrix},\det\varsigma_p)  \mid  0\leq j \leq c_p      \right\}.    \]
\item If $p\mid N^+$ is split in $M$, then  \begin{align*}
             \widetilde{\cE}_{\bfz,C, p} = \left\{ (\varsigma_p,\det\varsigma_p), (\varsigma_p \pMX{0}{1}{-1}{0},\det\varsigma_p)\right\}.
         \end{align*}
\item 
If $p\ndivides N^+N^-$ is non-split in $M$, then 
\[             \widetilde{\cE}_{\bfz,C, p} = \begin{cases} \left\{  (\varsigma_p \pDII{p^{-j}}{1},p^{-j}\det\varsigma_p ) \mid 0\leq j \leq c_p     \right\}   & \text{ if $p$ is split in $F$, }  \\
                                                         \left\{  (\varsigma_p,\det\varsigma_p)   \right\}    & \text{ if $p$ is non-split in $F$.}
                                                           \end{cases}          \]
\item If $p\mid N^-$, let $\pi_{D_p}$ be an element of $D_{0,p}$ with $\pi_{D_p}\pi_{D_p}^*=p$, and put
         \begin{align*}
             \widetilde{\cE}_{\bfz,C, p} =   \{ (\bfone_2,1), (\pi_{D_p},p) \}.
         \end{align*}
\end{enumerate}
\end{defn}
We record here the following integral analogue of Skolem-Noether theorem.
\begin{lm}\label{L:SkolemN}Let $F/\Qp$ be a finite extension and $E$ be a quadratic field over $F$. Let $\cO$ be an order of $\cO_E$ and $R={\rm M}_2(\cO_F)$. If $f,f':\cO\hookto R$ are two optimal embeddings of $\cO$ into $R$, then $f'(x)=u^{-1}f(x)u$ for some $u\in R^\x$.
\end{lm}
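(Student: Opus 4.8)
The plan is to deduce the statement from the classical Skolem--Noether theorem and then upgrade the conjugating element to an integral one by a lattice argument. Since $\cO$ spans $E$ over $F$, both $f$ and $f'$ extend uniquely to $F$-algebra embeddings $\tilde f,\tilde f'\colon E\hookto {\rm M}_2(F)$, and optimality means exactly that $\tilde f(E)\cap R=f(\cO)$ (and similarly for $f'$). By Skolem--Noether applied to the central simple $F$-algebra ${\rm M}_2(F)$, there is $g\in\GL_2(F)$ with $\tilde f'(x)=g^{-1}\tilde f(x)g$ for all $x\in E$. The subalgebra $\tilde f(E)$ is a quadratic subfield of the quaternion algebra ${\rm M}_2(F)$, hence its own centralizer, so any other conjugating element is of the form $\tilde f(\alpha)g$ with $\alpha\in E^\x$; it therefore suffices to find $\alpha\in E^\x$ with $\tilde f(\alpha)g\in R^\x=\GL_2(\cO_F)$.

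To produce such an $\alpha$ I would work with the standard lattice $L_0=\cO_F^{\oplus2}\subset V:=F^{\oplus2}$, for which $R=\{x\in{\rm M}_2(F):xL_0\subseteq L_0\}$ and $R^\x=\{u:uL_0=L_0\}$. Via $\tilde f$ the $F$-space $V$ becomes a one-dimensional $E$-vector space. Both $L_0$ and $gL_0$ are full $\cO_F$-lattices in $V$ stable under $\tilde f(\cO)$, and the optimality of $f$ (resp.\ of $f'$, rewritten through $\tilde f'=g^{-1}\tilde f(\cdot)g$) says precisely that $\{a\in E:\tilde f(a)L_0\subseteq L_0\}=\cO$ (resp.\ $\{a\in E:\tilde f(a)\,gL_0\subseteq gL_0\}=\cO$). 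In other words, after choosing an $E$-linear isomorphism $V\iso E$, both $L_0$ and $gL_0$ become \emph{proper} fractional $\cO$-ideals. Now $\cO$ is a local ring --- it is an order in the discrete valuation ring $\cO_E$, with maximal ideal $\frakm_E\cap\cO$ --- and it is moreover Gorenstein, being monogenic over $\cO_F$ (any $\cO_F$-order in the quadratic field $E$ is of the form $\cO_F[\gamma]$). Hence proper fractional $\cO$-ideals are invertible, and over the local ring $\cO$ an invertible module is free of rank one; so $L_0$ and $gL_0$ are $E^\x$-homothetic, i.e.\ $gL_0=\tilde f(\alpha)L_0$ for some $\alpha\in E^\x$. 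Then $u:=\tilde f(\alpha)^{-1}g$ fixes $L_0$, hence $u\in R^\x$, and since $\tilde f(E)$ is commutative
\[
 u^{-1}\tilde f(x)u=g^{-1}\tilde f(\alpha)\tilde f(x)\tilde f(\alpha)^{-1}g=g^{-1}\tilde f(x)g=\tilde f'(x)\qquad(x\in E).
\]
Restricting to $\cO$ gives $f'(x)=u^{-1}f(x)u$ with $u\in R^\x$, as desired.

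The step I expect to be the crux is the input that a proper fractional ideal of the local quadratic order $\cO$ is necessarily principal --- equivalently, that $\cO$ has trivial Picard group and that ``proper implies invertible'' holds over it (for quadratic orders this last point is the classical dictionary between primitive binary quadratic forms and invertible ideals, and it encodes the Gorenstein property). Everything else --- Skolem--Noether, together with the bookkeeping with the single lattice $L_0$ --- is formal. It is precisely at this point that the hypothesis $R={\rm M}_2(\cO_F)$, i.e.\ that $R$ is the \emph{maximal} order (so that $L_0$ is the only lattice one must track), is essential: the analogous conjugacy statement fails for general Eichler orders.
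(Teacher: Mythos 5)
Your proof is correct, and it is a genuinely different route from the paper's. The paper disposes of this lemma by a one-line citation to Hijikata's local theory of optimal embeddings (\cite[Corollary 2.6(i)]{hijikata74}), which computes the number of $R^\times$-conjugacy classes of optimal embeddings for all local Eichler orders and in particular shows there is a unique class when $R$ is maximal. You instead give a self-contained argument: Skolem--Noether to get conjugacy over $\GL_2(F)$, identification of $V=F^{\oplus 2}$ with $E$ as a one-dimensional $E$-space, translation of optimality into the statement that both $L_0$ and $gL_0$ have multiplier ring exactly $\cO$, and then the Gorenstein property of the (local, monogenic) quadratic order $\cO$ to conclude that both lattices are invertible, hence free, hence $E^\times$-homothetic. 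Every step checks out: $gL_0$ is $\tilde f(\cO)$-stable since $\tilde f(\cO)gL_0 = g\,\tilde f'(\cO)L_0 \subseteq gRL_0 \subseteq gL_0$; orders in a quadratic extension of the local field $F$ are all of the form $\cO_F + \varpi_F^n\cO_E = \cO_F[\varpi_F^n\delta]$, hence monogenic and Gorenstein and local; and for a Gorenstein local order a fractional ideal with multiplier ring exactly $\cO$ is invertible, hence principal. Your closing remark about why the maximality of $R$ is essential (a single lattice suffices, whereas for an Eichler order of positive level one would have to track a pair of lattices and the homothety classes may split into several orbits) is also the right intuition, and matches what Hijikata's counting formula actually says. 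In short: where the paper outsources the lemma to a black-box reference, you prove it from Skolem--Noether plus the ``proper implies invertible'' dictionary for local quadratic orders; the tradeoff is that your argument is transparent but does not immediately generalize to Eichler orders, whereas Hijikata's is uniform across levels.
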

\begin{proof}This is a special case of \cite[Corollary 2.6 (i)]{hijikata74} (See also the last paragraph of \cite[p.1158]{gross88}). \end{proof}

\begin{prop}\label{coset.Y}
 If $p\ndivides N^+$ or $p$ is split in $M$, then the set $\wtd\cE_{\bfz,C,p}$ is a complete set of representatives of $[\cE_{\bfz,C,p}]$. If $p\mid N^+$ is non-split in $M$, then $\cE_{\bfz,C,p}$ is the empty set.
\end{prop}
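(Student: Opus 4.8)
The plan is to work one prime at a time and to reduce everything to an explicit computation inside $D_{0,p}$. Fix $p$, put $c_p={\rm ord}_pC$, and note $c_p=0$ whenever $p\mid N^+N^-$, since $(C,N)=1$ by \eqref{hC} and $N=N^+N^-$. Assume first $p$ is split in $F$ (automatic in case (I), and true at every $p\mid N^-$ by hypothesis). Using \eqref{E:splittrivialization} write $h=i_w(h_1,h_2)$ with $h_1,h_2\in D_{0,p}^\x$; the constraint $\nu(\varrho(h))=1$ reads ${\rm n}(h_1)={\rm n}(h_2)$, and $\varrho(h^{-1})$ acts on $V_p\cong D_{0,p}$ by $x\mapsto h_1^{-1}xh_2$. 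Let $R_{0,p}:=j_w^{-1}(V(\Z_p))\subset D_{0,p}$, which is the maximal order when $p\nmid N^+$ and an Eichler order of level $p$ when $p\mid N^+$. Since $\bfz\,\xi_{C,p}=(1,\,p^{c_p}\iota(\delta))$ up to an integral unit (with $\iota(\delta)$ now viewed inside $D_{0,p}$), the membership $\varrho(h^{-1})\bfz\xi_{C,p}\in V(\Z_p)\oplus V(\Z_p)$ unfolds to $u:=h_1^{-1}h_2\in R_{0,p}$ together with $p^{c_p}h_1^{-1}\iota(\delta)h_2\in R_{0,p}$. Because ${\rm n}(u)=1$ and $R_{0,p}$ is stable under the main involution, $u\in R_{0,p}^\x$, and the pair of conditions collapses to
\[
  p^{c_p}\,g^{-1}\iota(\delta)\,g\in R_{0,p},\qquad g:=h_1,\quad h_2=gu,\ u\in R_{0,p}^\x.
\]
It then remains to enumerate such $g$ modulo left translation by $H_\bfz(\Q_p)\cong E_{0,p}^\x/F_p^\x$ (\lmref{stabz}) and right translation by $\cU_p$.

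\textbf{The case analysis.} For $p\mid N^-$ the order $R_{0,p}=\cO_{D_0,p}$ is the unique maximal order of the division algebra $D_{0,p}$, so every conjugate of it equals itself and the condition on $g$ is automatic; the double cosets are then governed by ${\rm ord}_p\circ{\rm n}$ on $D_{0,p}^\x$ modulo the image of $E_{0,p}^\x$ — a torus whose norm group has even valuation, since $K$ is inert at $p$ by \eqref{H'} — which has exactly two classes, giving the representatives of \defref{D:cosets}(iv). For $p\nmid N^+N^-$ we have $R_{0,p}={\rm M}_2(\Z_p)$ (via $\Phi_p$) and $c_p$ may be positive; here I would conjugate $\iota(\delta)$ into the normal form of \lmref{varsigma} and invoke the Cartan (or Iwasawa) decomposition of $\GL_2(\Q_p)$ to see that $p^{c_p}g^{-1}\iota(\delta)g\in{\rm M}_2(\Z_p)$ forces $g$ into $\GL_2(\Z_p)\pDII{p^{-j}}{1}\GL_2(\Z_p)$ in the non-split case, or $\GL_2(\Z_p)\pMX{1}{p^{-j}}{0}{1}\GL_2(\Z_p)$ when $\iota(\delta)$ is diagonal, with $0\le j\le c_p$; distinct $j$ give distinct double cosets (here one uses $(C,N\Delta_K)=1$), and \lmref{L:SkolemN} guarantees that two admissible $g$ with the same $j$ differ only by $H_\bfz(\Q_p)$ on the left and $\cU_p$ on the right. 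This produces cases (i) and (iii). For $p\mid N^+$ split in $M$ the analysis is similar, but $R_{0,p}$ is now an Eichler order of level $p$ and $c_p=0$, and the two maximal orders containing $R_{0,p}$ account for the two representatives of case (ii). Finally, if $p\mid N^+$ is non-split in $M$, then $M$ is unramified at $p$ since $(\Delta_M,N)=1$, hence $p$ is inert in $M$ and $\iota(\delta)$ has characteristic polynomial of non-square discriminant mod $p$; but every element of an Eichler order of level $p$ in ${\rm M}_2(\Z_p)$ has characteristic polynomial of discriminant $\equiv(a-d)^2\equiv\square\pmod p$, and conjugation preserves characteristic polynomials, so no $g$ satisfies $g^{-1}\iota(\delta)g\in R_{0,p}$ and $\cE_{\bfz,C,p}=\emptyset$.

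\textbf{Expected difficulty.} The step I expect to be the main obstacle is the non-maximal case $p\mid N^+$: there $\cU_p$ contains only the units of $\wh R$ rather than all of $\GL_2(\Z_p)$, so one must track carefully how the left $H_\bfz(\Q_p)$-action and the right $\cU_p$-action interact with the Eichler-order structure when checking that the listed representatives are pairwise inequivalent and exhaust $[\cE_{\bfz,C,p}]$; the discriminant obstruction behind the emptiness statement also needs a refined (mod $4$ or mod $8$) argument at $p=2$, and in case (II) a parallel local analysis at primes non-split in $F$, where $V_p$ is no longer of the form $D_{0,p}$ and \lmref{varsigma}(iii) must be used. The finiteness of $[\cE_{\bfz,C,p}]$ is already available from \cite[Proposition 1.5]{yo84}, so the remaining work is the lengthy but elementary verification, case by case, that the sets $\wtd\cE_{\bfz,C,p}$ of \defref{D:cosets} are exactly these double cosets.
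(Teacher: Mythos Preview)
Your reduction at primes split in $F$ --- passing via $i_w,j_w$ to a single variable $g\in D_{0,p}^\x$ subject to $p^{c_p}g^{-1}\iota(\delta)g\in R_{0,p}$, taken modulo left $M_p^\x$ (the centralizer of $\iota(\delta)$) and right $R_{0,p}^\x$ --- is correct and genuinely different from the paper's approach. The paper does not collapse to $D_{0,p}$; instead it conjugates $\bfz$ by $\varsigma_p$ inside $H^{(1)}(\Q_p)$ to a normalized $\bfz'$ and then runs a case-by-case Iwasawa-type analysis directly on pairs $(a,\alpha)$, repeatedly invoking \lmref{L:SkolemN} to pin down the double cosets. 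Your discriminant-square obstruction for the emptiness when $p\mid N^+$ is inert in $M$ is also a different (and cleaner) argument: the paper instead shows that any $h\in\cE_{\bfz,C,p}$ would produce an optimal embedding of $\cO_{K_p}$ into the Eichler order $R_p$, forcing $p$ to split in $K$. Incidentally, your worry about $p=2$ is unnecessary: for $c\equiv 0\pmod 2$ one has $(a-d)^2+4bc\equiv(a-d)^2\pmod 8\in\{0,1,4\}$, while $2$ inert in $M$ (and $(\Delta_M,N)=1$) gives $\Delta_M\equiv 5\pmod 8$.

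Where your plan is incomplete is at primes $p$ non-split in $F$. There $V_p$ no longer identifies with $D_{0,p}$, your one-variable reduction is unavailable, and the three subcases the paper treats --- $p$ inert in $F$ and split in $K$; $p$ ramified in $F$; $p$ inert in $F$ and ramified in $M$ --- each require a direct computation in $H^{(1)}(\Q_p)$ using \lmref{varsigma}(ii),(iii) together with \lmref{L:SkolemN}, including a somewhat delicate conductor argument in the ramified-in-$F$ case. If you only aim at case (I) (where every $p$ is split in $F$), your sketch is essentially complete once you translate your $M_p^\x\backslash D_{0,p}^\x/R_{0,p}^\x$ representatives back into the $H(\Q_p)$-coordinates of \defref{D:cosets}; for the full proposition you must still supply the non-split-in-$F$ analysis, and that part does not reduce to your setup.
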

\begin{proof}  Let $\varsigma=(\varsigma_p,\det\varsigma_p)\in H(\Qp)$ and \[\bfz^\prime =\varrho(\varsigma^{-1})\bfz,\quad H_{\bfz^\prime} :=\varsigma^{-1}H_{\bfz}(\Qp)\varsigma;\quad \cE_{\bfz',C,p}=\varsigma^{-1}\cE_{\bfz,C,p}.\] 
Suppose that $\cE_{\bfz',C,p}$ is not empty and let $h\in \cE_{\bfz',C,p}$, or equivalently 
\beq\label{E:E1} \varrho(h^{-1})\bfz'\in R_p\oplus C^{-1}\cdot R_p.\eeq
Denote by $[h]$ the double coset $H_{\bfz'}h \cU_p$. The task is to show that the class $[h]$ can be represented by some element in $\varsigma^{-1}\wtd\cE_{\bfz,C,p}$ and that $p\ndivides N^+$ if $p$ is non-split in $M$.

 {\bf Case (i) $p$ is split in $M$:} In this case, $p$ is unramified in $F$ and $K$ by \eqref{rFK}, and one verifies that $\bfz'=(1,\pDII{\delta}{\ol{\delta}})$ and 
    \begin{align*}
      H_{\bfz^\prime} = \left\{ (\begin{pmatrix} a & \\ & d  \end{pmatrix}, \alpha ) \in {\rm GL}_2(F_p) \times_{F^\times_p}\Q^\times_p : a\ol{d}=\alpha  \right\}.
    \end{align*}
Using the Iwasawa decomposition, one can verify that $[h]$ can be represented by an element $h_1$ of the form 
\[h_1=(\pMX{1}{-y}{0}{1}\pDII{a^{-1}}{1}u,1)\quad (a\ol{a}=1,\,y\in F_p,\,u\in H^{(1)}(\Zp)),\]
where $H^{(1)}(\Zp)=H^{(1)}(\Qp)\cap (\GL_2(\cO_{F_p})\times_{\cO_{F_p}^\x}\Zp^\x)$. 
Then \eqref{E:E1} implies that 
\begin{align*}\varrho(h_1^{-1})\bfz'=&(\pMX{a}{y-\ol{y}}{0}{\ol{a}},\pMX{a\delta}{y\ol{\delta}-\ol{y}\delta}{0}{\ol{a\delta}})\in {\rm M}_2(\cO_{F_p})\oplus {\rm M}_2(C^{-1}\cO_{F_p}).
\end{align*}
Since $a\ol{a}=1$ and $p$ is unramified in $E$, from the above relation we can deduce that 
\[a\in\cO_{F_p}^\x;\quad  y\con y_1\pmod{\cO_{F_p}}\text{ with }y_1\in C^{-1}\Zp,\,-j\leq \Ord_p(y_1)\leq 0.\]
Writing $y_1=-p^{-j}v\ol{v}$ with $0\leq j\leq c_p$ and $v\in \cO_{F_p}^\x$, it follows that 
\[[h]=[h_1]=[(\pDII{v}{\ol{v}^{-1}}\pMX{1}{p^{-j}}{0}{1}\pDII{v^{-1}}{\ol{v}}u,1)]=[(\pMX{1}{p^{-j}}{0}{1}u_1,1)]\]
for some $u_1\in H^{(1)}(\Zp)$. If $p\ndivides N^+$, then $\cU_p=H^{(1)}(\Zp)$, and hence $[h]$ can be represented by some element in $\varsigma^{-1}\wtd\cE_{\bfz,C,p}$.

{\bf Case (ii) $p\mid N^+$ is split in $M$:} In this case, $c_p=0$, so from the discussion in the previous case we see that the class $[h]$ can be represented by some element in $\GL_2(\cO_{F_p})$. Now we claim that $[h]$ can be represented by some element $h_2$ of the form
\[h_2=(\pMX{1}{0}{x}{1},1) \text{ or }h_2=(\pMX{x}{1}{-1}{0},1),\quad x\in \cO_{F_p}.\]
 To see the claim, recall that 
\[R_p=\stt{\pMX{a}{b}{c}{d}\in {\rm M}_2(\cO_{F_p})\mid c\con 0\pmod{N^+}},\]
 and note that for a finite extension $L/\Qp$ with a uniformizer $\varpi$, we have the coset decomposition
\beq\label{E:Iwasawa}   \GL_2(\cO_L)
 = \disjoint_{x\in \varpi\cO_L}\begin{pmatrix} 1 &0 \\ x& 1  \end{pmatrix}B(\cO_L) 
        \disjoint_{x\in\cO_L}   \begin{pmatrix} x  & 1 \\-1 &   0\end{pmatrix} B(\cO_L),
\eeq where $B(\cO_L)=\stt{\begin{pmatrix} a & b \\ 0& d \end{pmatrix}  |  a, d\in\cO_L^\x, b\in \cO_L }$, so it suffices to consider the case where $F_p=\Qp \oplus \Qp $ and $[h]$ is represented by 
\[h_3=(\pMX{(1,y)}{(0,1)}{(x,-1)}{(1,0)},1),\,x\in p\Zp,y\in\Zp.\]
Then $\varrho(h_3^{-1})\bfz'\xi_{C,p}\in R_p\ot R_p$ implies that
\[\pMX{(y,-x)}{(1,-1)}{(1-xy,xy-1)}{(-x,y)}\in R_p\implies (1-xy,xy-1)\in N^+, \]
which is a contradiction as $1-xy\in\Zp^\x$. This proves the claim. We proceed the argument. An easy computation shows that \begin{align*}
\varrho(h_2^{-1})\bfz'&=\begin{cases}(\pMX{1}{0}{x-\ol{x}}{1},\pMX{\delta}{0}{\ol{x\delta}-x\delta}{\ol{\delta}})&\text{ if }h_2=\pMX{1}{0}{x}{1},\\
(\pMX{1}{0}{x-\ol{x}}{1},\pMX{\ol{\delta}}{0}{x\ol{\delta}-\ol{x}\delta}{\delta})&\text{ if }h_2=\pMX{x}{1}{-1}{0}.\end{cases}
\end{align*} The condition $\varrho(h_2^{-1})\bfz'\in R_p\oplus R_p$ implies that  $x\con 0\pmod{N^+}$, and hence 
\[[\cE_{\bfz',C,p}]=\stt{[(\bfone_2,1)],[\pMX{0}{1}{-1}{0},1]}.\]

{\bf Case (iii) $p\ndivides N^-$ is non-split in $M$:} We also need to show $p\ndivides N^+$ in this case. First consider the subcase where $p=\frakp\ol{\frakp}$ is split in $F$ (so $p$ is non-split in $K$). Recall that we make the identifications $i_\frakp:(\GL_2(\Qp)\x\GL_2(\Qp))/\Qp^\x\iso H(\Qp)$ and $j_\frakp:{\rm M}_2(\Qp)\iso V_p$ via the isomorphisms corresponding to $\frakp$ in \eqref{E:splittrivialization} and that $(g_1,g_2)\in H(\Qp)=(\GL_2(\Qp)\x\GL_2(\Qp))/\Qp^\x$ acts on $V_p={\rm M}_2(\Qp)$ by $\varrho(g_1,g_2)x=g_1xg_2^{-1}$.  Write $\bfz'= (1,\delta')$ and $\delta'\in{\rm M}_2(\Qp)$. Let
\[K_{\bfz'}:=\stt{y\in {\rm M}_2(\Qp)\mid y\delta'=\delta'y}=\Qp(\delta').\] Then $K_{\bfz'}\iso K_p$ is a quadratic field over $\Qp$ as $p$ is non-split in $K$, and by definition
\[H_{\bfz'}=K^\x_{\bfz'}.\]
For $h=(h_1,h_2)\in \cE_{\bfz,C,p}$, $\det h_1=\det h_2$, and we can verify that the class $[h]$ can be represented by $(g,g)$ for some $g\in \GL_2(\Qp)$. For a non-negative integer $m$, let $\cO_{\bfz',m}=\Zp+p^m\cO_{K_{\bfz'}}$ be the order of $K_{\bfz'}$ with conductor $p^{m}$. Let $\gamma:K_{\bfz'}\to {\rm M}_2(\Qp)$ be the conjugation map $\gamma(x)=g^{-1}xg$ and let $p^j$ be the conductor of the order $\gamma^{-1}({\rm M}_2(\Zp))\cap K_{\bfz'}$. Thus, $\gamma$ is an optimal embedding of $\cO_{\bfz',j}$ into ${\rm M}_2(\Zp)$. On the other hand, \eqref{E:E1} implies that $g^{-1}\delta' g\in C^{-1}R_p$, or equivalently 
\[ \gamma(\cO_{\bfz',c_p})=g^{-1}\cO_{\bfz',c_p}g\subset R_p.
\]
This implies that $0\leq j\leq c_p$. If $p\mid N^+$, then $p$ is inert in $K$ and $c_p=j=0$.     We see that $\gamma$ is an optimal embedding of $\cO_{\bfz'}$ into $R_p$ the Eichler order of level $N^+$ in ${\rm M}_2(\Zp)$. This implies that $p$ is split in $K$, which is a contradiction. 

Therefore, we have $p\ndivides N^+$. Then $R_p={\rm M}_2(\Zp)$, and by our choice of $\varsigma_p$, one can verify directly that the conjugation $\gamma':K_{\bfz'}\to {\rm M}_2(\Qp),\,\gamma'(x)=\pDII{p^j}{1}x\pDII{p^{-j}}{1}$ also induces an optimal embedding of $\cO_{j}$ into ${\rm M}_2(\Zp)$. By \lmref{L:SkolemN}, $\gamma(x)=u^{-1}\gamma'(x)u$ for some $u\in \GL_2(\Zp)$. It follows that 
\beq\label{E:caseiii} g\in K_{\bfz'}^\x\pDII{p^{-j}}{1}\GL_2(\Zp)\text{ for some }0\leq j\leq c_p,\eeq
hence $[h]=[(g,g)]$ is represented by 
\[[i_\frakp(\pDII{p^{-j}}{1},\pDII{p^{-j}}{1})]=[(\pDII{p^{-j}}{1},p^{-j})],\,0\leq j\leq c_p.\]

 Now we treat the subcase where $p$ is inert in $F$ but is split in $K$. Then $c_p=0$ by \eqref{hC}. One verifies that \begin{align*} \bfz'=&(\pMX{0}{\delta_F}{-\delta_F^{-1}}{0},\pMX{0}{\delta\delta_F}{-\ol{\delta}\delta_F^{-1}}{0});\\
H_{\bfz'}=&\stt{(\pDII{a}{d},\al)\mid a\ol{a}=d\ol{d}=\al}.\end{align*}
By Iwasawa decomposition, the class $[h]$ can be represented by $h_3\cdot (u,1)$, where \begin{align*}h_3=&(\pMX{p^n}{p^ny}{0}{1},p^n);\quad u\in\SL_2(\cO_{F_p}).
\end{align*}
Put $s=yp^n\delta_F^{-1}$. Since $\varrho(h^{-1})\bfz'\in R_p\oplus R_p$, we have
\begin{align*}  
\varrho(h_3^{-1})\bfz'
=&(\pMX{s}{p^{-n}\delta_F(1-s\ol{s})}{-p^n\delta_F^{-1}}{\ol{s}},
\pMX{s\ol{\delta}}{p^{-n}\delta_F(\delta-s\ol{s}\ol{\delta})}{-p^n\delta_F^{-1}\ol{\delta}}{\ol{s\delta}})\\
&\in {\rm M}_2(\cO_{F_p})\oplus {\rm M}_2(\cO_{F_p}).
\end{align*}
Note that $\delta_F\in\cO_{F_p}^\x$ and $\delta-\ol{\delta}\in\Zp^\x$ as $p$ is unramified in $F$ and $K$. The above implies that  
$s\in\cO_{F_p}$, $s\ol{s}\con 1\pmod{p^n}$ and $\delta\con\ol{\delta}\pmod{p^n}$.
 We conclude that $n=0$ and $y\in\cO_{F_p}$. If $p\ndivides N^+$, then $R_p={\rm M_2}(\cO_{F_p})$, and hence $[h]=[(\bfone_2,1)]$, as desired. Now assume that $p\mid N^+$. Then $[h_3]$ is represented by $(u,1)$ for some $u\in\SL_2(\cO_{F_p})$. By \eqref{E:Iwasawa}, we may assume $u=\pMX{1}{0}{x}{1}$ or $\pMX{x}{1}{-1}{0}$ for some $x\in\cO_{F_p}$. A direct computation shows that  
\begin{align*}
\varrho(u^{-1})\bfz'=\begin{cases}(\pMX{ \ol{x}\delta_F}{\delta_F}{-\delta_F^{-1}-x\ol{x}\delta_F}{-x\delta_F},\pMX{\ol{x}\delta\delta_F }{\delta\delta_F}{-\ol{\delta}\delta_F^{-1}-x\ol{x}\delta\delta_F}{-x\delta\delta_F})&\text{if }u=\pMX{1}{0}{x}{1},\\
(\pMX{\ol{x}\delta_F^{-1}}{\delta_F^{-1}}{-\delta_F-x\ol{x}\delta_F^{-1}}{-x\delta_F^{-1}},
\pMX{\ol{x}\ol{\delta}\delta_F^{-1}}{\ol{\delta}\delta_F^{-1}}{-\delta\delta_F-x\ol{x}\ol{\delta}\delta_F^{-1}}{-x\ol{\delta}\delta_F^{-1}})&\text{if }u=\pMX{x}{1}{-1}{0}.\end{cases}\end{align*}
It follows that $\varrho(u^{-1})\bfz'\in R_p\oplus R_p$ would imply that either $\delta-\ol{\delta}\in N^+\cO_{F_p}$ or $\delta_F\in N^+\cO_{F_p}$, which is a contradiction. 

It remains to consider the subcases where either $p$ is ramified in $F$ or $p$ is inert in $F$ but ramified in $M$. In this case, $p\ndivides N^+$, $R_p={\rm M}_2(\cO_{F_p})$, and the assumption \eqref{hC} implies $c_p=0$, and write $h^{-1}=(a,\al)$ with $\rmN_{F/\Q}(\det a)=\al^2\in(\Qp^\x)^2$. Let $\frakp$ be the prime of $F$ above $p$. By \eqref{rFK}, $p$ is unramified in $E$, so there exists a uniformizer $\varpi_{E_\frakp}$ of $E_\frakp$ such that
${\rm N}_{E/\Q}(\varpi_{E_\frakp})\in (\Q^\times_p)^2$. It follows that $[h]=[(\bfone_2,1)]$ if we can show that
\beq\label{E:E3}a \in R_p^\x E_\frakp^\x. \eeq
Since $\varrho(h^{-1})\bfz' \xi_{C,p}\in R_p\oplus R_p$, we find that $\al^{-1} a\ol{a}^*\in R_p,\,\al^{-1}a\delta' \ol{a}^*\in R_p$ if and only if \[\al^{-1}a\ol{a}^*\in R_p^\x,\, a \delta' a^{-1}\in R_p.
\]It follows that the conjugation $\gamma(x)=axa^{-1}$ embedds the $\cO_{F_p}$-order $\cO:=\cO_{F_p}[\delta_M']$ into $R_p$ $(\delta_M'=\varsigma_p^{-1}\iota(\delta_M)\varsigma_p)$. By our choice of $\varsigma_p$ (\lmref{varsigma} (ii)), the inclusion $\cO\hookto R_p$ is an optimal embedding, so to prove \eqref{E:E3}, it suffices to show that $\gamma$ is also an optimal embedding of $\cO$ into $R_p$ 
by \lmref{L:SkolemN}. Now suppose that $\gamma:\cO\hookto R_p$ is not optimal. Then one can verify that $p$ must be ramified in $F$ and in $M$, $\delta_F$ is a uniformizer of $F$, and the maximal order $\cO_{E_\frakp}=\cO_{F_p}[\delta_F^{-1}\delta'_M]$. It follows that $\cO$ is the $\cO_{F_p}$-order of conductor $\frakp$, and $\gamma$ is an (optimal) embedding of $\cO_{E_\frakp}$ into $R_p$.  On the other hand, the conjugation $x\mapsto \pDII{1}{\delta_F}x\pDII{1}{\delta_F^{-1}}$ is an embedding of $\cO_{E_\frakp}$ into $R_p$, so by \lmref{L:SkolemN}, we have $a\in R_p^\x  \pDII{1}{\delta_F}E_\frakp^\x$. In particular, $\det a\in \delta_F\rmN_{E/F}(E_\frakp^\x)\cO_{F_p}^\x$, which contradicts to the fact that $\rmN_{F/\Q}(\det a)\in(\Qp^\x)^2$.

{\bf Case (iv) $p\mid N^-$:}  In this case, $p\cO_F=\frakp\ol{\frakp}$ is split in $F$ by our assumption in \subsecref{secYos1} and $\frakp$ is inert in $E$ by \eqref{H'}. Since $R^\times_p = \{ x\in D_p: {\rm n}(x)\in {\mathcal O}^\times_{F_p} \}$ and 
${\rm ord}_\frakp({\rm n}(E^\times_\frakp))=  2\Z$, it is easy to see that every coset in
$ [ \cE_{\bfz,C,p}]$ can be represented by $(\bfone_2,1)$ and $(\pi_{D_p},p)$.

We have proved that cosets of $[\cE_{\bfz,C,p}]$ can be represented by elements in $\wtd \cE_{\bfz,C,p}$, and it is not difficult to show that these cosets represented by $\wtd\cE_{\bfz,C,p}$ are distinct by the same case-by-case analysis as above. We leave the details to the reader. 
\end{proof}
The above proposition suggests the following Heegner condition for $M$:
\beqcd{Heeg}
\text{Each prime factor of $N^-$ (resp. $N^+$) is inert (resp. split) in $M$.}
\eeqcd

Choose $\varsigma_\infty\in D_{0,\infty}^\x$ such that $\Phi_\infty^{-1}(\varsigma_\infty^{-1}\delta_M\varsigma_\infty)=\pDII{\sqrt{-d_M}}{-\sqrt{-d_M}}\in\bbH$. Let ${\mathcal P}_{N}$ be the set of divisors of $N$. For every positive integer $m|C$ and ${\mathcal N}\in {\mathcal P}_{N}$, we define $w_{\mathcal N}, \varsigma, \varsigma^{(m)}\in H_\A$ by 
\begin{align*}
         \varsigma = &\prod_{p\leq\infty} (\varsigma_p,\det\varsigma_p), \quad 
 w_{\mathcal N}   =    \prod_{ p\mid {\mathcal N}, p\mid N^+ } \begin{pmatrix} 0 & 1 \\ -1 & 0 \end{pmatrix}
                              \prod_{ p\mid {\mathcal N}, p\mid N^-} (\pi_{D_p},p),     \\
  \varsigma^{(m)} = & \varsigma 
                              \prod_{ p:\text{split in $M$}}  \begin{pmatrix} 1 & p^{-\Ord_p(m)} \\ 0 & 1 \end{pmatrix} 
                              \prod_{ p: \substack{\text{split in }F \\ \text{non-split in } M }} \begin{pmatrix} p^{-\Ord_p(m)}& 0 \\ 0 & 1 \end{pmatrix}.
                              \end{align*}
Suppose that $M$ satisfies \eqref{Heeg}. By \propref{coset.Y}, $\cE_{\bfz,C}$ is not empty and \begin{align}\label{E:coset.Y}
 \widetilde{\cE}_{\bfz,C} = \{  \varsigma_f^{(m)} w_{\mathcal N} \mid {\mathcal N} \subset {\mathcal P}_N, m\mid C \}.  
\end{align}

\subsection{Bessel periods and toric period integrals}
In this subsection, we express certain normalized Bessel periods in terms of toric period integrals. We begin with some notation. Let $\cO:=R\cap E$ be an $\cO_F$-order of $E$ and for each positive integer $m$, put $\cO_{m}=\cO_F+m\cO$.\footnote{In general, $\cO$ may not be the maximal order $\cO_E$ unless $(\Delta_F,\Delta_K)=1$.} If $L/\Q$ is quadratic, put $\cO_{L,m}=\Z+m\cO_L$. Define the rational number $v_{E/M,m}$ by \[v_{E/M,m}=\sharp(\wh\cO_M^\x/\wh\cO_{M,m}^\x)\cdot \sharp(\wh\cO^\x/\wh\cO_m^\x)^{-1},\]
and define the integer $t_{E,m}$ by 
   \[t_{E,m}=\sharp\stt{xF^\x\in E^\x/F^\x\mid x\in\wh\cO_{m}^\x \wh F^\x}.\]
Note that $t_{E,m}$ divides the order of the torsion subgroup of $\cO_{m}^\x$. 
   
Let $\frakX_K^-$ denote the space of finite order Hecke characters $\phi:K^\x\A^\x\bksl K_\A^\x\to\Zbar^\x$. For each $\phi\in\frakX^-_K$ of conductor $C\cO_K$, we define the normalized Bessel period by
\beq\label{E:Bessel}
\bfB_{\theta_\bff,S,\phi}^{*}:=\frac{C^2\cdot e^{2\pi(1+\delta\ol{\delta})}}{(-2\sqrt{-1})^{k_1+k_2}}\cdot\frac{t_{E,C}}{v_{E/M,C}}\cdot \pair{\bfB_{\theta_\bff,S,\phi}(g_C)}{Q_{S}}_{2k_2}\in\C,\eeq
where $Q_S\in \Z[\frac{1}{\sqrt{d_K}}][X,Y]_{2k_2}$ is defined by \begin{align*}
Q_S:=& ((X,Y)S\begin{pmatrix}X\\Y\end{pmatrix})^{k_2}\cdot (\det S)^{-\frac{k_1+k_2+2}{2}}\\
=&(X^2+(\delta+\ol{\delta})XY+\delta\ol{\delta}Y^2)^{k_2}\cdot (\Im\delta)^{-(k_1+k_2+2)}.\end{align*}
For a Hecke character $\chi:E^\x F^\x_\A\bksl E^\times_{\mathbf A} \to \C^\times$, define the toric period integral by 
\begin{align*}
P({\mathbf f},\chi, h)
 = \int_{ [E^\x/F^\x]} 
        \langle (X_1Y_1)^{k_1}\otimes (X_2Y_2)^{k_2}, {\mathbf f}( t h) \rangle_{2\ul{k}}\cdot
        \chi(t){\rm d}t   .
\end{align*}
By the definition of $\varsigma^{(m)}$ for $m|C$, one can check easily that \[(\varsigma^{(m)})^{-1}(E_\infty^\x\,\x\,\wh\cO_{m}^\x)\varsigma^{(m)}\subset T_2(\R)\,\x\,H(\wh\Z),\]
where $T_2(\R)$ is the group of diagonal matrices in $\bbH$, so we see easily that 
     \[    P({\mathbf f}, \phi\circ {\rm N}_{E/K}, \varsigma^{(m)})= \sharp(\wh\cO^\x/\wh\cO_{m}^\x)^{-1}t_{E,m}^{-1}\cdot \Theta_m({\mathbf f}, \phi\circ{\rm N}_{E/K}),\]
where 
\[ \Theta_m({\mathbf f}, \phi\circ{\rm N}_{E/K}) 
  :=   \sum_{[t]\in E^\times\wh F^\x\backslash \widehat{E}^\times/\wh\cO_{m}^\times} \langle (X_1Y_1)^{k_1} \otimes (X_2Y_2)^{k_2}, {\mathbf f}(t\varsigma^{(m)}) \rangle_{2\ul{k}}\cdot \phi({\rm N}_{E/K}(t)).\]

Let $\frakN^+\mid N^+$ be an ideal of $\cO_F$ and $\frakN=\frakN^+N^-$. For each prime $\frakp\ndivides N^-$ of $\cO_F$, choose an element $\uf_{\frakN,\frakp}\in F_p^\x$ generating $\frakN$. We define the Atkin-Lehner operator $\tau_{\frakN,\frakp}\in \wh D^\x$ as follows: \[\tau_{\frakN,\frakp}=\pMX{0}{1}{-\uf_{\frakN,\frakp}}{0}\text{ if }\frakp\ndivides N^-\text{ and }\tau_{\frakN,\frakp}=\pi_{D_p}\text{ if }\frakp|N^-.\] 
Let $R_{\frakN^+}$ be the Eichler order of level $\frakN^+$. Suppose that $\bff\in\cA_{\ul{k}}(D_\A^\x,\wh R_{\frakN^+}^\x)$ is an eigenform of Atkin-Lehner operators $\tau_{\frakN,\frakp}$ with eigenvalues $\ep_\frakp(\bff)\in\stt{\pm 1}$.  Namely,
$\bff(h\tau_{\frakN,\frakp})=\ep_\frakp(\bff)\cdot \bff(h)$. By definition, $\ep_\frakp(\bff)=1$ if $\frakp\ndivides\frakN$. Put
\beq\label{E:localconstant}\begin{aligned}
   e(\bff,\phi) 
   =&  \prod_{ \substack{ p\mid N^+, \\ p=\frakp: \text{inert in }F }}
                   (1+ \epsilon_{\frakp}(\bff)\phi({\rm N}_{E/K}(\frakP))^{n_\frakp} )     \\
     &  \times \prod_{ \substack{ p\mid N, \\ p=\frakp\ol{\frakp}: \text{split in $F$} }}
                   (1+ \epsilon_{\frakp}(\bff)\epsilon_{\ol{\frakp}}(\bff) \phi({\rm N}_{E/K}(\frakP))^{n_\frakp-n_{\ol{\frakp}}} ),
\end{aligned}\eeq
where $\frakp$ denotes a prime ideal of $\cO_F$ and 
\begin{itemize}
\item $\frakP$ is a prime ideal of $\cO_E$ lying above $\frakp$.
\item $n_\frakp=\Ord_\frakp(\frakN)(=\Ord_\frakp(\frakN^+N^-))$.
\end{itemize}
\begin{prop}\label{Besselper}Suppose that $M$ satisfies \eqref{Heeg} and \eqref{rFK} and let $C$ be an integer satisfying \eqref{hC}. Let $\phi\in\frakX^-_K$ of conductor $C\cO_K$. Then we have \[
 \bfB^*_{\theta_\bff,S,\phi} =  e({\mathbf f}, \phi)\cdot \Theta_C({\mathbf f}, \phi\circ {\rm N}_{E/K}).
\]
\end{prop}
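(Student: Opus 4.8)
The plan is to start from the formula of \propref{P:Bessel1} and process its double integral over $H_\bfz(\A_f)\bksl\cE_{\bfz,C}$ and $[E^\x/F^\x]$ in three stages: extract the archimedean weight, unwind the finite coset sum into toric period integrals, and reconcile the constants. First I would separate off the archimedean variable. As $\phi$ has finite order it is trivial at $\infty$; writing $t=(t_\infty,t_f)$, the automorphy $\bff(th_f)=\tau_{\ul k}(t_\infty^{-1})\bff(t_fh_f)$ and the $\tau_{\ul k}$-invariance of $\pair{\cdot}{\cdot}_{2\ul k}$ (apply the displayed transformation rule for $\pair{\cdot}{\cdot}_n$ to $\tau_k=\rho_{(k,-k)}$, for which $n+2b=0$) turn the inner integrand of \propref{P:Bessel1} into $\pair{\tau_{\ul k}(t_\infty)\varphi_\infty(\bfz)}{\bff(t_fh_f)}_{2\ul k}\,\phi(\rmN_{E/K}(t_f))$. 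Integrating over the compact quotient $E^\x_\infty/F^\x_\infty$ (of volume $1$) replaces $\varphi_\infty(\bfz)$ by $\Pi(\varphi_\infty(\bfz))$, where $\Pi=\int_{E^\x_\infty/F^\x_\infty}\tau_{\ul k}(t_\infty)\,\rmd t_\infty$ is the projector onto the $E^\x_\infty$-fixed line in $\cW_{\ul k}(\C)$.

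Next I would pin down the archimedean constant. Conjugating by $\varsigma_\infty$ diagonalizes $\iota(\delta_M)$, so $\Pi=\tau_{\ul k}(\varsigma_\infty)\,\Pi_0\,\tau_{\ul k}(\varsigma_\infty)^{-1}$ with $\Pi_0$ the projector onto the zero-weight line $\C\cdot (X_1Y_1)^{k_1}\ot(X_2Y_2)^{k_2}$ for the diagonal torus; by \lmref{lem1Y}, $\tau_{\ul k}(\varsigma_\infty)^{-1}\varphi_\infty(\bfz)=e^{-2\pi(1+\delta\ol\delta)}P_{\ul k}(1,\mathrm{diag}(\delta,\ol\delta))$, which I would evaluate directly from the explicit formula for $P_{\ul k}$. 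Applying $\Pi_0$, pairing the resulting $\C[X,Y]_{2k_2}$-component against $Q_S$, and using $\det S=(\Im\delta)^2$, a routine manipulation of binomial sums should show that the functional $v\mapsto\big\langle\pair{\Pi(\varphi_\infty(\bfz))}{v}_{2\ul k},Q_S\big\rangle_{2k_2}$ on $\cW_{\ul k}(\C)$ is a constant multiple of $v\mapsto\pair{\tau_{\ul k}(\varsigma_\infty)\big((X_1Y_1)^{k_1}\ot(X_2Y_2)^{k_2}\big)}{v}_{2\ul k}$, the constant being exactly what the normalizing factor $\tfrac{C^2e^{2\pi(1+\delta\ol\delta)}}{(-2\sqrt{-1})^{k_1+k_2}}$ in \eqref{E:Bessel} is designed to cancel. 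Since $\tau_{\ul k}(\varsigma_\infty)\big((X_1Y_1)^{k_1}\ot(X_2Y_2)^{k_2}\big)$ is the archimedean part of the test vector in the toric period integral $P(\bff,\phi\circ\rmN_{E/K},\cdot)$, after this step the normalized Bessel period becomes, up to the leftover measure constant $\tfrac{t_{E,C}}{v_{E/M,C}}$,
\[\sum_{[h_f]\in[\cE_{\bfz,C}]}\vol\big(H_\bfz(\A_f)\bksl H_\bfz(\A_f)h_f\cU\big)\cdot P(\bff,\phi\circ\rmN_{E/K},h_f).\]

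Then I would unwind this finite sum. By \propref{coset.Y} and \eqref{E:coset.Y} the classes $[\cE_{\bfz,C}]$ are represented by $\varsigma^{(m)}_fw_{\mathcal N}$ with $m\mid C$ and ${\mathcal N}\subset{\mathcal P}_N$, and the orbit volumes above are ratios of local torus-volumes inside $\cU$; combined with the passage from the adelic integral to the finite sum via the identity $P(\bff,\phi\circ\rmN_{E/K},\varsigma^{(m)})=\sharp(\wh\cO^\x/\wh\cO_m^\x)^{-1}t_{E,m}^{-1}\Theta_m(\bff,\phi\circ\rmN_{E/K})$ (and its analogue with $w_{\mathcal N}$ inserted), these reproduce exactly $v_{E/M,m}$, $t_{E,m}$ and $\sharp(\wh\cO^\x/\wh\cO_m^\x)$, so the measure constant cancels and we are left with a signed sum over $m\mid C$ and ${\mathcal N}$ of $\Theta$-terms. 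In this sum only $m=C$ survives: for a proper divisor $m$ of $C$ the inner average of $\phi(\rmN_{E/K}(\cdot))$ over $\wh\cO_m^\x/\wh\cO_C^\x$ vanishes, because $\phi$ has \emph{exact} conductor $C\cO_K$ and $C$ is prime to $N\Delta_K$. Finally, at each prime $p\mid N$ the factor $w_{\mathcal N}$, moved past $\varsigma^{(m)}_f$ and re-expressed through a uniformizer of $E_\frakP$, differs from the Atkin--Lehner operator $\tau_{\frakN,\frakp}$ only by an element under which $\bff$ is invariant; using $\bff(h\tau_{\frakN,\frakp})=\epsilon_\frakp(\bff)\bff(h)$ and tracking the value $\phi(\rmN_{E/K}(\frakP))^{n_\frakp}$, the two cases ``$p\mid{\mathcal N}$'' and ``$p\nmid{\mathcal N}$'' combine into the factor $1+\epsilon_\frakp(\bff)\phi(\rmN_{E/K}(\frakP))^{n_\frakp}$ when $p$ is inert in $F$, and $1+\epsilon_\frakp(\bff)\epsilon_{\ol\frakp}(\bff)\phi(\rmN_{E/K}(\frakP))^{n_\frakp-n_{\ol\frakp}}$ when $p=\frakp\ol\frakp$ splits in $F$, \ie exactly the product $e(\bff,\phi)$; this gives $\bfB^*_{\theta_\bff,S,\phi}=e(\bff,\phi)\,\Theta_C(\bff,\phi\circ\rmN_{E/K})$.

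The hard part will be twofold. First, the archimedean bookkeeping: the evaluation of $P_{\ul k}(1,\mathrm{diag}(\delta,\ol\delta))$, the projection to the zero-weight line, and the pairing against $Q_S$ must produce precisely the constant $\tfrac{(-2\sqrt{-1})^{k_1+k_2}}{e^{2\pi(1+\delta\ol\delta)}}$ and must dovetail with the several measure normalizations ($\rmd t$, $\rmd a$, $\rmd\ol t$, $\rmd\ol h_f$) that enter $v_{E/M,C}$ and $t_{E,C}$. Second, the local analysis at the ramified primes $p\mid N$: one must carefully follow how $w_{\mathcal N}$, the torus coset representative $\varsigma^{(m)}_f$, and $\tau_{\frakN,\frakp}$ interact --- including the unit twist distinguishing $\pMX{0}{1}{-1}{0}$ from $\tau_{\frakN,\frakp}$ and the inert-versus-split dichotomy in $F$ --- since this is the mechanism that produces the powers $\phi(\rmN_{E/K}(\frakP))^{n_\frakp}$ appearing in $e(\bff,\phi)$.
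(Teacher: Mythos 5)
Your plan matches the paper's proof of \propref{Besselper} step for step: start from \propref{P:Bessel1}, diagonalize $\iota(\delta_M)$ by $\varsigma_\infty$ to evaluate the archimedean pairing (the paper computes $\varphi^{[0]}_\infty(\varrho(\varsigma_\infty)\bfz)$ directly rather than phrasing it as a projector $\Pi$, but that is only a difference of bookkeeping), unwind the coset sum over $\wtd{\cE}_{\bfz,C}=\{\varsigma^{(m)}_fw_{\cN}\}$ from \propref{coset.Y}, observe that only $m=C$ survives because $\phi$ has exact conductor $C\cO_K$ with $E/K$ unramified at $p\mid C$, identify the measure constant $\mu_{\varsigma^{(C)}}$ with $\sharp(\wh\cO_M^\x/\wh\cO_{M,C}^\x)$ so that the $v_{E/M,C}$, $t_{E,C}$ normalizations cancel, and finally absorb the $w_{\cN}$ factors into Atkin--Lehner eigenvalues $\ep_\frakp(\bff)$ and the values $\phi(\rmN_{E/K}(\frakP))^{n_\frakp}$ to assemble $e(\bff,\phi)$. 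This is essentially the paper's argument; the pieces you flag as ``the hard part'' (the exact archimedean constant, and the inert-vs-split behavior of $w_\cN$ against $\tau_{\frakN,\frakp}$) are precisely the computations the paper carries out.
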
\begin{proof}
 Note that 
\[Q_S(X,Y)=\rho_\kappa(\rt\xi_\infty^{-1})((X^2+Y^2)^{k_2})(\det\xi_\infty)^{2k_1+4}\]
for $\xi_\infty=\pMX{\Im\delta}{-\Re\delta}{0}{1}(\Im\delta)^{-1}$. Putting $\varphi^{[0]}_\infty(x) = \langle \varphi_\infty(x), Q_S(X,Y)\rangle_{2k_2}$, by \lmref{lem1Y} and a routine computation, we obtain
\begin{align*}
\varphi^{[0]}(\varrho(\varsigma_\infty)\bfz)
 =& e^{-2\pi(1+\delta\ol{\delta})}\pair{P_{\ul{k}}(\varrho(\varsigma_\infty)\bfz\xi_\infty)}{(X^2+Y^2)^{k_2}}_{2k_2}\\
=&  e^{-2\pi(1+\delta\ol{\delta})}\langle P_{\ul{k}}(\bfone_2,\pDII{\sqrt{-1}}{-\sqrt{-1}}), (X^2+Y^2)^{k_2} \rangle_{2k_2}  \\
=&  e^{-2\pi(1+\delta\ol{\delta})}(-2\sqrt{-1})^{k_1+k_2} (X_1Y_1)^{k_1}\otimes (X_2Y_2)^{k_2}.   
\end{align*}
Therefore, by \propref{P:Bessel1} and \propref{coset.Y}, we find that
\begin{align*}
\bfB_{\theta_\bff,S,\phi}^*=&\frac{e^{2\pi(1+\delta\ol{\delta})}t_{E,C}}{(-2\sqrt{-1})^{k_1+k_2}v_{E/M,C}}\cdot\sum_{h\in\wtd\cE_{\bfz,C}}\mu_h\cdot \int_{[E^\x/F^\x] }
        \langle \varphi_\infty^{[0]}(\bfz ), {\mathbf f}(t h) \rangle_{2\ul{k}}\cdot
        \phi({\rm N}_{E/K}(t)) {\rm d}t \\
        =&\sum_{h\in\wtd\cE_{\bfz,C}}\mu_h\cdot P(\bff,\phi\circ\rmN_{E/K},\varsigma_\infty h),
\end{align*}
where $\mu_h:=\vol(H_\bfz(\A_f)\cap h\cU h^{-1},\rmd a_f)^{-1}$.
Since $\phi$ has conductor $C\cO_K$ and $E/K$ is unramified at prime factors of $C$, one can verify that
\begin{align*}
P({\mathbf f},\phi\circ{\rm N}_{E/K}, \varsigma^{(m)}w_{\mathcal N}) =0
\end{align*}
unless $m=C$. On the other hand, using the proofs in \propref{coset.Y}, one can verify easily that 
\[\mu_{\xi^{(C)}}=\prod_{p|C}\sharp(\frac{H_{\bfz'_p}\cap \cU_p}{H_{\bfz'_p}\cap \xi_p\cU_p\xi_p^{-1}})=\sharp(\wh\cO_M^\x/\wh\cO_{M,C}^\x),\]
where $\bfz'_p=\varrho(\varsigma_p^{-1})\bfz$, $\xi_p=\pMX{1}{C^{-1}}{0}{1}$ if $p$ is split in $M$ and $\xi_p=\pDII{C^{-1}}{1}$ if $p$ is inert in $M$. 
We thus obtain
\[\bfB_{\theta_\bff,S,\phi}^*=\sharp(\wh\cO^\x/\wh\cO_{C}^\x)t_{E,C}\sum_{\cN\subset \cP_N}P(\bff,\phi\circ\rmN_{E/K},\varsigma^{(C)}w_\cN).\]
If $p\mid N^-$, then $p=\frakp\ol{\frakp}$ split in $F/\Q$, and by definition
\begin{align*}
P({\mathbf f},\phi\circ{\rm N}_{E/K}, \varsigma^{(C)} w_p)
= \epsilon_\frakp(\bff) \epsilon_{\ol{\frakp}}(\bff)  
   P({\mathbf f},\phi\circ{\rm N}_{E/K}, \varsigma^{(C)} ).  
\end{align*}
If $p\mid N^+$, then $p$ is split in $M$. Let $\frakP$ be a prime ideal of $\cO_E$ above $p$ and let $\frakp=\cO_F\cap\frakP$. For $x\in F^\x_p$, put
\[d_x=\varsigma_p\pDII{1}{x}\varsigma_p^{-1}.\]
According to the recipe of $\varsigma_p$ in \lmref{varsigma} (i), we see that $d_x\in E_p^\x$ and $\Ord_\frakp(x)=\Ord_{\frakP}(d_x)$. Choosing $\uf_\frakN\in F_p^\x$ with $\Ord_\frakp(\uf_\frakN)=n_\frakp$ for every $\frakp|p$, we have 
\[\bff(h\varsigma^{(C)}w_p)=\bff(d_{\uf_\frakN}^{-1}\varsigma^{(C)}\pMX{0}{1}{-\uf_\frakN}{0})=(\prod_{\frakp|p}\epsilon_\frakp(\bff))\cdot\bff(d_{\uf_\frakN}^{-1}\varsigma^{(C)}).\]
As $\Ord_\frakP(d_{\uf_\frakN})=n_\frakp$, we obtain
\begin{align*}
  P({\mathbf f},\phi\circ{\rm N}_{E/K}, \varsigma^{(C)} w_p)  
=& (\prod_{\frakP|\frakp|p}\epsilon_\frakp(\bff)\phi({\rm N}_{E/K}(\frakP))^{n_\frakp} )\cdot 
   P({\mathbf f},\phi\circ{\rm N}_{E/K}, \varsigma^{(C)} ).  
\end{align*}
This completes the proof. \end{proof}

\begin{rem}\label{Besselsplit}
In the case $F=\Q\oplus\Q$, $\frakN^+=(N_1^+,N_2^+)$, we have ${\mathbf f} = {\mathbf f}_1\otimes {\mathbf f}_2$, where $\bff_i$ is a $\cW_{k_i}(\C)$-valued modular form on $(D_0\ot\A)^\x$ for $i=1,2$. For a finite order Hecke character $\phi: K^\x\A^\x\bksl K^\times_{\mathbf A}\to \C^\x$, we put 
\begin{align*}
    P({\mathbf f}_i, \phi, h)
 = \int_{K^\times  \Q^\times_{\mathbf A}\backslash K^\times_{\mathbf A} }   
        \langle (X_iY_i)^{k_i}, {\mathbf f}_i(  t h ) \rangle_{2k_i}
        \phi(t) {\rm d}t \quad (i=1,2).
\end{align*}
Then one verifies that
\begin{align*}
     P({\mathbf f}, \phi\circ {\rm N}_{E/K}, \varsigma^{(C)})   
  = & P({\mathbf f}_1, \phi, \varsigma^{(C)}) P({\mathbf f}_2, \phi^{-1}, \varsigma^{(C)}).
\end{align*}
\end{rem}

\section{The non-vanishing of Bessel periods}\label{secFC}
\subsection{Integrality of Yoshida lifts}\label{SS:normalizationY}
Let $\ell\ndivides 2N$ be a rational prime and fix an isomorphism $\iota_\ell:\C\iso \Qbar_\ell$. Let $\Qbar$ be the algebraic closure of $\Q$ in $\C$ and let $\lambda$ be the place of $\bar{\Q}$ induced by $\iota_\ell$. Let $\cO_\lam$ be the completion of the algebraic integers $\ol{\Z}$ along $\lam$ and let $\frakl$ be the prime ideal of $\cO_F$ lying under $\lam$. Embedd $F\ot\Q_\ell\hookto F_\frakl\oplus F_\frakl,\,x\mapsto (\iota_\ell(x),\iota_\ell(\ol{x}))$. Recall that in \subsecref{SS:DATA} we have chosen a real quadratic field $F'$ in which $\ell$ splits and fixed an isomorphism $\Phi_{F'}:\bbH_\Q\ot F'\iso D_0\ot F'$ such that $\Phi_{F'}(\cO_{\bbH_\Q}\ot\Z_\ell)=\cO_{D_0}\ot\Z_\ell$. Then $\Phi_{F'}^{-1}$ gives rise to a morphism $\tau_{\ul{k},\frakl}:=\tau_{\ul{k}}\circ\Phi_{F'}^{-1}:D_\ell^\x\to\Aut\cW_{\ul{k}}(\Qbar_\ell)$ induced by 
\[D_\ell\iso \bbH\ot F\ot\Q_\ell\hookto {\rm M}_2(F_\frakl(\sqrt{-1})\oplus F_\frakl(\sqrt{-1}))\hookto {\rm M}_2(\Qbar_\ell\oplus \Qbar_\ell).\] 
By construction, we have
$\tau_{\ul{k},\frakl}:R_\ell^\x\iso(\cO_{\bbH_\Q}\ot\Z_\ell\ot\cO_F)^\x
\to\Aut\cW_{\ul{k}}(\cO_\lam)$ and 
\[\tau_{\ul{k},\frakl}(\gamma)=\tau_{\ul{k}}(\gamma)\in\Aut\cW_{\ul{k}}(\Qbar)\text{ for }\gamma\in D^\x.\]
Define the $\ell$-adic avatar $\widehat{\mathbf f}: \wh D^\x\to {\mathcal W}_{\ul{k}}(\Qbar_\ell)$  of ${\mathbf f}$ by  
$\widehat{\mathbf f}(h) = \tau_{\ul{k},\frakl}(h^{-1}_\ell){\mathbf f}(h).$ By definition, we can verify that
\[\wh \bff(\gamma h uz)=\tau_{\ul{k},\frakl}(u_\ell^{-1})\wh \bff(h)\quad(\gamma\in D^\x,\,u\in \wh R^\x,\,z\in \wh F^\x). \] Hence the values of $\wh\bff$ are determined by those at representatives of the finite double coset $D^\x\bksl \wh D^\x/\wh R^\x$, and we can normalize ${\mathbf f}$ by multiplying a scalar in $\Qbar_\ell^\x$ so that $\widehat{\mathbf f}$ takes values in ${\mathcal W}_{\ul{k}}(\cO_\lam)$ and $\widehat{\mathbf f}\not\equiv 0\pmod{\lambda}$. In what follows, we assume $\bff$ is normaliazed as above.

\begin{prop}Suppose that $\ell>2k_1$. The classical Yoshida lift $\theta_{\bff}^*$ has $\lam$-integral Fourier expansion. \end{prop}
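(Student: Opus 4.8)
The plan is to read the Fourier coefficients off \propref{P:FCYoshida} and check $\lam$-integrality coefficient by coefficient, using the normalization of $\bff$ fixed in \subsecref{SS:normalizationY}. For $S\in\Lam_2$ with $S=S_\bfz$ and $\cE_\bfz$ nonempty (otherwise $\bfa(S)=0$, which is trivially $\lam$-integral), \propref{P:FCYoshida} gives
\[
\bfa(S)=\sum_{h_f\in[\cE_\bfz]}w_{\bfz,h_f}\,\pair{P_{\ul k}(\bfz)}{\bff(h_f)}_{2\ul k}\in\cL_\kappa(\C),\qquad
w_{\bfz,h_f}=\sharp\big(H_\bfz(\Q)\cap h_f\cU h_f^{-1}\big)^{-1},
\]
so it suffices to prove that $\iota_\ell$ carries each summand into $\cL_\kappa(\cO_\lam)$. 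The three inputs are: (a) the coefficients of $P_{\ul k}$ lie in $\Z$, as noted right after its explicit expansion; (b) the $\ell$-adic avatar $\wh\bff$ takes values in $\cW_{\ul k}(\cO_\lam)$ by the very choice of normalization; (c) the pairing $\pair{\cdot}{\cdot}_{2\ul k}=\pair{\cdot}{\cdot}_{2k_1}\otimes\pair{\cdot}{\cdot}_{2k_2}$, which is where the hypothesis $\ell>2k_1$ enters.

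First, the role of $\ell>2k_1$: in the monomial bases, $\pair{\cdot}{\cdot}_{2k_i}$ has matrix entries $\pm\binom{2k_i}{j}^{-1}$ with $0\le j\le 2k_i\le 2k_1<\ell$; every prime factor of such a binomial coefficient is at most $2k_1<\ell$, so each $\binom{2k_i}{j}$ is a unit in $\cO_\lam$ and hence $\pair{\cW_{\ul k}(\cO_\lam)}{\cW_{\ul k}(\cO_\lam)}_{2\ul k}\subseteq\cO_\lam$. Next I would carry out the integral-structure bookkeeping. Since $\ell\nmid N$, the Eichler order is maximal at $\ell$ and $R_\ell^\times$ is a maximal compact; using $h_f\in\cE_\bfz$, i.e. $\varrho((h_f)_\ell^{-1})\bfz\in V(\Z_\ell)^{\oplus2}$, one may choose the coset representatives with $(h_f)_\ell\in\cU_\ell=H^{(1)}(\Z_\ell)$ (after replacing $\bfz$ by an $H^{(1)}(\Q)$-translate if necessary, which changes nothing in view of Witt's theorem and the left $D^\times$-invariance of $\bff$). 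Then, by the $\ell$-compatibility of the trivializations $\Phi_\infty$, $\Phi_{F'}$ from \subsecref{SS:DATA}, the equivariance $\tau_{\ul k}(h)P_{\ul k}(x)=P_{\ul k}(\varrho(h)x)$ of \lmref{lem1Y} (specialized at the identity of $\GL_2$), the identity $\bff(h_f)=\tau_{\ul k,\frakl}((h_f)_\ell)\wh\bff(h_f)$, and the $D^\times/F^\times$-invariance of the pairing, one rewrites (working $\ell$-adically via $\iota_\ell$)
\[
\pair{P_{\ul k}(\bfz)}{\bff(h_f)}_{2\ul k}=\pair{P_{\ul k}\big(\varrho((h_f)_\ell^{-1})\bfz\big)}{\wh\bff(h_f)}_{2\ul k},
\]
whose first argument now lies in $\cW_{\ul k}(\cO_\lam)\otimes\cL_\kappa(\cO_\lam)$ and whose second lies in $\cW_{\ul k}(\cO_\lam)$; the pairing is therefore $\lam$-integral by the binomial-coefficient computation above.

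It remains to control the weights $w_{\bfz,h_f}$. The stabilizer $H_\bfz(\Q)\cap h_f\cU h_f^{-1}$ is finite, since $H_\bfz$ is, up to isogeny, either an anisotropic torus of rank $\le 1$ or a $\Q$-form of $\mathrm{SO}_3$ attached to a positive-definite form; concretely it is a group of roots of unity in an imaginary quadratic field, respectively a finite subgroup of a binary polyhedral group defined over $\Q$, so its order divides $24$. Hence for $\ell>3$ each $w_{\bfz,h_f}\in\cO_\lam^\times$ and the proof is complete. The remaining case $\ell=3$ — which forces $2k_1<3$, i.e. $k_1\le 1$ — is the point that requires the most care: a stabilizer can then have order $3$ or $6$, and one must argue that the corresponding summands are still $\lam$-integral. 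The natural route is to conjugate an order-$3$ element $\sigma$ of the stabilizer into $\cU_\ell$, observe that both $P_{\ul k}(\bfz)$ and $\wh\bff(h_f)$ are fixed by $\tau_{\ul k,\frakl}(\sigma)$, so that one is pairing two vectors of $\cW_{\ul k}(\cO_\lam)^{\langle\sigma\rangle}$, and then analyze the $\Z/3$-module $\cW_{\ul k}(\cO_\lam)$ finely enough to see that the pairing of such invariants acquires the needed factor; alternatively, since $k_1\le 1$ there are only finitely many relevant weights and one verifies $\lam$-integrality by hand. I expect this $\ell=3$ case to be the main obstacle; everything else is the routine integral-structure bookkeeping sketched above.
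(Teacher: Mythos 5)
Your approach is essentially the paper's: read the Fourier coefficients from \propref{P:FCYoshida}, use the $\PGL_2$-invariance of $\pair{\cdot}{\cdot}_{2\ul k}$ together with \lmref{lem1Y} and the definition $\wh\bff(h)=\tau_{\ul k,\frakl}(h_\ell^{-1})\bff(h)$ to rewrite each summand as $\pair{P_{\ul k}(\varrho(h_\ell^{-1})\bfz)}{\wh\bff(h_f)}_{2\ul k}$, note that $\varrho(h_\ell^{-1})\bfz$ is a lattice point so $P_{\ul k}$ evaluates $\lam$-integrally (it takes values in $\Z[1/2]$ on $\cO_{\bbH_\Q}^{\oplus 2}$, and $\ell$ is odd), and observe that the Gram matrix of $\pair{\cdot}{\cdot}_{2k_i}$ has entries $\pm\binom{2k_i}{j}^{-1}$, all units in $\cO_\lam$ once $\ell>2k_1$. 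The one piece of bookkeeping you should delete is the proposed normalization ``one may choose the coset representatives with $(h_f)_\ell\in\cU_\ell$ after replacing $\bfz$ by an $H^{(1)}(\Q)$-translate'': a single global left translation cannot in general bring the $\ell$-components of all double-coset representatives into $\cU_\ell$, and it is also unnecessary, since the equivariance identity you display immediately afterward already does all the work.

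On the weights $w_{\bfz,h_f}$: you raise a legitimate point that the paper's printed proof does not address at all; the paper passes directly from the integrality of $\pair{P_{\ul k}(\varrho(h_\ell^{-1})\bfz)}{\wh\bff(h_f)}_{2\ul k}$ to ``we see immediately that $\bfa(S)\in\cL_\kappa(\cO_\lam)$'' without a word about the coefficients $w_{\bfz,h_f}=\sharp(H_\bfz(\Q)\cap h_f\cU h_f^{-1})^{-1}$. Your observation that these stabilizer orders have prime divisors only in $\{2,3\}$ (any element of finite order lies in a quadratic subfield of a definite quaternion algebra, so has order in $\{1,2,3,4,6\}$, and the possible finite subgroups are cyclic, dihedral, tetrahedral or octahedral but never icosahedral) is correct and disposes of every $\ell\ge 5$. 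Since $\ell\nmid 2N$ and $\ell>2k_1$, the residual case is $\ell=3$ with $3\nmid N$ and $k_1\le 1$; neither you nor the paper close it. So your proposal is at least as complete as the paper's own proof, is more explicit about the one place where the argument is fragile, and honestly flags the remaining corner case rather than sweeping it under the rug.
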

\begin{proof}
From the Fourier expansion $\sum_S \bfa(S)q^S$ of $\theta_\bff^*$ in \propref{P:FCYoshida}, we have \begin{align*}\bfa(S)=&\sum_{[h_f]\in [\cE_{\bfz}]}w_{\bfz,h_f}\pair{P_{\ul{k}}(\bfz)}{\bff(h_f)}_{2\ul{k}}\\
=&\sum_{[h_f]\in [\cE_{\bfz}]}w_{\bfz,h_f}\pair{P_{\ul{k}}(\varrho(h_\ell^{-1}) z)}{\wh\bff(h_f)}_{2\ul{k}}.\end{align*}
We note that $P_{\ul{k}}(\varrho(h_\ell^{-1}) z)\in\cO_\lam[X_1,Y_1]_{2k_1}\ot\cO_\lam[X_2,Y_2]_{2k_2}\ot\cO_\lam[X,Y]_{2k_2}$ since $h_f\in\cE_{\bfz}$ implies that $\varrho(h_\ell^{-1} z)\in R_\ell=\Phi_{F'}(\cO_{\bbH_\Q}\ot\Z_\ell)\ot\cO_{F}$, and $P_{\ul{k}}(x)$ is a polynomial on $\bbH^{\oplus 2}$ with coefficients in $\Z$ which takes value in $\Z[1/2]$ on $\cO_{\bbH_\Q}$. Combined with the fact that the pairing $\pair{\cdot}{\cdot}$ on $\cW_{\ul{k}}(\cO_\lam)$ takes value in $\cO_\lam$ if $\ell>2k_1$, we see immediately that $\bfa(S)\in\cL_\kappa(\cO_\lam)$. \end{proof}

Now we fix a prime $\ell>2k_1$ and retain the notation $M,K,\delta,\bfz,C,\ldots$ and the hypotheses \eqref{rFK}, \eqref{Heeg} and \eqref{hC} in the previous section. We relate the non-vanishing of Bessel periods (modulo $\lam$) to the non-vanishing of Fourier coefficients of Yoshida lifts.
\begin{lm}\label{L:BesselFC}Assume that $\ell\ndivides 2C\Delta_K$. Let $\phi\in\frakX_K^-$ of conductor $C\cO_K$. Then $\bfB^*_{\theta_\bff,S_\bfz,\phi}\in\cO_\lam$, and if $\bfB^*_{\theta_\bff,S_\bfz,\phi}\not\con 0\pmod{\lam}$, then there exists some $S'\in\Lam_2$ such that $\det S'=C^2\Delta_K/4$ and
\[\bfa(S')\not \con 0\pmod{\lam}.\]
\end{lm}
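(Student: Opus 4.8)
The statement asserts two things: (i) the integrality $\mathbf{B}^*_{\theta_\bff, S_\bfz, \phi}\in\cO_\lam$; and (ii) if this quantity is a $\lam$-unit then some Fourier coefficient $\bfa(S')$ with $\det S' = C^2\Delta_K/4$ is nonzero mod $\lam$. The key input is the identity \eqref{E:BesselFC}-type relation hinted at in the introduction: after normalization, the Bessel period is a $\cO_\lam$-linear combination of Fourier coefficients $\bfa(S')$ of $\theta^*_\bff$, where the matrices $S'$ run over the $\GL_2(\Z)$-equivalence classes (twisted by determinant, cf.\ \eqref{E:FC3}) of matrices of the shape $\nu\cdot\rt\xi S_\bfz\xi$ with $\xi\in\GL_2(\Q)$, $\nu\in\Q^\x$, arising from the explicit choice $g_C = \diag(\xi_C,\rt\xi_C^{-1})$ and from the translates $j(t)$. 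Since $\det S_\bfz = \delta\ol\delta - \tfrac{(\delta+\ol\delta)^2}{4} = (\Im\delta)^2 = \Delta_K/4$, and $g_C$ scales this by $\det\xi_C = C$ on each diagonal block, every such $S'$ satisfies $\det S' = C^2\Delta_K/4$.

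\textbf{Step 1: Unwind the definition \eqref{E:Bessel} of $\mathbf{B}^*_{\theta_\bff,S,\phi}$.} First I would expand $\mathbf{B}_{\theta_\bff,S,\phi}(g_C)$ using its definition \eqref{E:dfnBessel} as an integral over $[E^\x/E_0^\x]$ of $\mathbf{W}_{\theta_\bff,S}(j(t)g_C)$ against $\phi\circ\rmN_{E/K}$. Because $\phi$ has finite order conductor $C\cO_K$, this integral is in fact a \emph{finite sum} over the class group $E^\x\wh F^\x\bksl\wh E^\x/\wh\cO_{E,C}^\x$ (the archimedean place contributes via $\varphi_\infty$, as in \propref{P:Bessel1}). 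Pairing against $Q_S$ and using \eqref{E:FC3}, each term $\mathbf{W}_{\theta_\bff,S}(j(t)g_C)$ gets rewritten as $\mathbf{W}_{\theta_\bff, S'}(\mathrm{something})$ for an appropriate $S'\in\cH_2(\Q)$; the left-$U_\Q$-invariance and \eqref{E:FC} express this in terms of the classical coefficients $\bfa(S')$. Keeping track of the diagonal torus action, $S' = \nu\,\rt\xi S_\bfz\xi$ with $\nu = \rmN_{E/\Q}(t_f)$ up to squares and $\xi\in\GL_2(\A_f)$ coming from $\Psi(\rmN_{E/K}(t))\xi_C$; one then checks $\det S' = \nu^2(\det\xi)^2\det S_\bfz$, and that for the terms that actually contribute (those with $S'\in\Lam_2$ and $S' = S_{\bfz'}$ for some $\bfz'\in\bfX$), the normalizing factor $(\det S)^{-(k_1+k_2+2)/2}$ inside $Q_S$ together with the prefactor $C^2 e^{2\pi(1+\delta\ol\delta)}/(-2\sqrt{-1})^{k_1+k_2}$ and the rational number $t_{E,C}/v_{E/M,C}$ conspire to make the total coefficient in front of each $\bfa(S')$ lie in $\cO_\lam$ (here one uses $\ell\nmid 2C\Delta_K$ so that $(\Im\delta)^{2} = \Delta_K/4$ and the combinatorial factors $w_{\bfz,h_f}$, $t_{E,C}$, $v_{E/M,C}$ are all $\lam$-units or $\lam$-integral). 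This gives an identity
\[
\mathbf{B}^*_{\theta_\bff,S_\bfz,\phi} \;=\; \sum_{S'} c_{S'}\,\bfa(S'), \qquad c_{S'}\in\cO_\lam,
\]
the sum over finitely many $S'\in\Lam_2$ with $\det S' = C^2\Delta_K/4$. Since each $\bfa(S')\in\cL_\kappa(\cO_\lam)$ by the previous proposition (and $\ell > 2k_1$), part (i) follows immediately, and for part (ii): if the left side is $\not\equiv 0\pmod\lam$, some term $c_{S'}\bfa(S')\not\equiv 0\pmod\lam$, forcing $\bfa(S')\not\equiv 0\pmod\lam$ for that $S'$.

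\textbf{Step 2: Verify the determinant condition and integrality of the coefficients.} The determinant bookkeeping is routine once Step 1 is set up: every $\xi$ occurring is, up to $\GL_2(\wh\Z)$ (which preserves $\det$ up to a $\lam$-unit) and up to the scalar $C$ from $\xi_C$, an element of the torus $T_S$, so $\det$ of the resulting $S'$ is exactly $C^2\cdot(\det\xi_{\text{torus}})\cdot\Delta_K/4$ where the torus part has determinant a square of a norm from $K$, absorbed into the $\nu$-scaling; in all cases $\det S' = C^2\Delta_K/4$. For the $\lam$-integrality of $c_{S'}$ one must check that no power of $\ell$ appears in the denominators coming from $Q_S\in\Z[1/\sqrt{d_K}][X,Y]_{2k_2}$, the measure-theoretic constants $w_{\bfz,h_f} = \sharp(\cdots)^{-1}$, and $t_{E,C}/v_{E/M,C}$ — all of these involve only primes dividing $2C\Delta_K N$, which are coprime to $\ell$ by hypothesis.

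\textbf{Main obstacle.} The genuinely delicate point is \textbf{Step 1}: establishing the precise identity between the normalized Bessel period and a $\cO_\lam$-combination of Fourier coefficients, with explicit control that the coefficients $c_{S'}$ are $\lam$-integral. The subtlety is not conceptual but bookkeeping-heavy — one must correctly apply the transformation formula \eqref{E:FC3} through the embedding $j: E^\x\to\GSp_4$, identify which $S'$ land in $\Lam_2$, and track how the various normalizing constants (the $e^{2\pi(\cdots)}$ archimedean factor cancelling $\varphi_\infty$, the factor $(-2\sqrt{-1})^{k_1+k_2}$ matching the evaluation $\varphi^{[0]}(\varrho(\varsigma_\infty)\bfz)$ computed in the proof of \propref{Besselper}, and the class-number ratios) combine. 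I would model the computation closely on the proof of \propref{Besselper} and \propref{P:FCYoshida}, since those already contain the needed identities; the new content is merely observing that when $\ell\nmid 2C\Delta_K$ everything is $\lam$-integral and that the determinant is pinned down to $C^2\Delta_K/4$.
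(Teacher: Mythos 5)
Your proposal is correct and follows essentially the same route as the paper: unwind $\bfB^*_{\theta_\bff,S_\bfz,\phi}$ into a finite sum indexed by a ray class group, use strong approximation and the transformation rule \eqref{E:FC3} to rewrite each term as a pairing of the Fourier coefficient $\bfa(S^{\gamma_t})$ against the normalizing polynomial $Q_{S^{\gamma_t}}$ (this is the identity \eqref{E:BesselFC}), and then observe that $\det S^{\gamma_t}=C^2\Delta_K/4$ and that all the constants $(-2\sqrt{-1})^{k_1+k_2}$, $t_{E,C}/t_{K,C}$, and the coefficients of $Q_{S^{\gamma_t}}\in\Z[\tfrac{1}{C\Delta_K}][X,Y]$ are $\lam$-integral under $\ell\nmid 2C\Delta_K$. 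The only cosmetic slips are that the sum in the unwinding of \eqref{E:dfnBessel} runs over $E^\x\wh E_0^\x\bksl\wh E^\x/\wh\cO_C^\x$ (coming from the measure on $[E^\x/E_0^\x]$) rather than the $\Theta$-type group $E^\x\wh F^\x\bksl\wh E^\x/\wh\cO_{E,C}^\x$ you quote, and that the dependence on $\bfa(S')$ is through the scalar pairing $\pair{\bfa(S^{\gamma_t})}{Q_{S^{\gamma_t}}}_{2k_2}$ rather than literally a linear combination of the vectors $\bfa(S')$; neither affects the conclusion.
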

\begin{proof}Let $S=S_\bfz$. By definitions \eqref{E:dfnBessel} and \eqref{E:Bessel}, the normalized Bessel period $\bfB^*_{\theta_\bff,S,\phi}$ is equal to 
\beq\label{E:4}
  \frac{C^2\cdot e^{2\pi(1+\delta\ol{\delta})}t_{E,C}}{(-2\sqrt{-1})^{k_1+k_2}v_{E/M,C}}\sum_{[t]}\frac{v_{E/M,C}}{t_{K,C}}\cdot \pair{\bfW_{\theta_\bff,S}(j(t)g_C)}{Q_{S}}_{2k_2}\cdot \phi(\rmN_{E/K}(t)),
\eeq
where $[t]$ runs over the finite double cosets $E^\x \wh E^\x_{0}\bksl \wh E^\x/\wh\cO^\x_{C}$ 
and  
\[t_{K,C}:=\sharp\stt{xE^\x_0\in E^\x/E_0^\x\mid x\in \wh\cO_{C}^\x \wh E_0^\x}.\]
Note that $t_{K,C}$ divides the order of the torsion subgroup of $\cO_{K,C}^\x$ and hence $t_{K,C}\in\Z_\ell^\x$ as  $\ell\ndivides\Delta_K $. By strong approximation, for $t\in\wh E^\x$ we can decompose $\Psi(\rmN_{E/K}(t))=\al_t u_t$ with $\al_t\in\GL_2(\Q)$ and $u_t\in \GL_2(\wh\Z)$. Put $\gamma_t:=\sqrt{\det\al_t}^{-1}\al_t\xi_C$ and $S^{\gamma_t}=\rt\gamma_t S\gamma_t$. Applying  \eqref{E:FC3} and the computation in \propref{P:FCYoshida}, we can verify that
\[\bfW_{\theta_\bff,S}(j(t)g_C)=\rho_\kappa(\rt\gamma_t^{-1})\bfa(S^{\gamma_t})e^{-2\pi(1+\delta\ol{\delta})};\]hence \eqref{E:4} is equal to 
\[ \frac{C^2t_{E,C}}{(-2\sqrt{-1})^{k_1+k_2}t_{K,C}}\sum_{[t]}\pair{\bfa(S^{\gamma_t})}{(\det\gamma_t^{2k_1+4})\cdot\rho_\kappa(\rt\gamma_t)Q_{S}}_{2k_2}\cdot \phi(\rmN_{E/K}(t)).\]
A little computation shows that \[(\det\gamma_t^{2k_1+4})\cdot\rho_\kappa(\rt\gamma_t)Q_{S}=
Q_{S^{\gamma_t}},\]
so we obtain
\beq\label{E:BesselFC}\bfB^*_{\theta_\bff,S,\phi}= \frac{C^2}{(-2\sqrt{-1})^{k_1+k_2}}\cdot\frac{t_{E,C}}{t_{K,C}}\cdot\sum_{[t]}\pair{\bfa(S^{\gamma_t})}{Q_{S^{\gamma_t}}}_{2k_2}\cdot\phi(\rmN_{E/K}(t)).\eeq
Note that $S^{\gamma_t}\in\cH_2(\Q)$ and $\det S^{\gamma_t}=C^2\Delta_K/4$. If $\bfa(S^{\gamma_t})\not =0$, then $S^{\gamma_t}\in\Lam_2$, and $Q_{S^{\gamma_t}}\in\Z[\frac{1}{C\Delta_K}][X,Y]$. This shows that
\[\pair{\bfa(S^{\gamma_t})}{Q_{S^{\gamma_t}}}_{2k_2}\in \cO_\lam[\frac{1}{C\Delta_K}].\]
By \eqref{E:BesselFC}, we conclude that if $\bfB^*_{\theta_\bff,S,\phi}\not \con 0\pmod{\lam}$, then $\bfa(S^{\gamma_t})\not \con 0\pmod{\lam}$
for some $t\in\wh E^\x$. This completes the proof.\end{proof}

\subsection{The non-vanishing of Yoshida lifts}
We investigate the problem of the non-vanishing of Yoshida lifts modulo $\lam$ in the case of $F=\Q\oplus\Q$. Let $(f_1,f_2)$ be a pair of elliptic newforms of weight $(k_1+2,k_2+2)$ and level $(\Gamma_0(N_1^+N^-),\Gamma_0(N_2^+N^-))$. Let $N={\rm l.c.m}(N_1^+N^-,N_2^+N^-)$ and $\frakN^+=(N_1^+,N_2^+)$. Suppose further that $\bff\in\cA_{\ul{k}}(D_\A^\x,\wh R^\x_{\frakN^+})$ is the normalized newform associated with $(f_1,f_2)$ in the sense of \subsecref{subsec:fromsonH}.
\begin{thm}\label{T:nonvanYoshida}
Suppose that 
\begin{itemize}
\item[(LR)] For every $q\mid N$ with $q=\frakq\ol{\frakq}$ split in $F$ and ${\rm ord}_\frakq({\mathfrak N}) = {\rm ord}_{\ol{\frakq}}({\mathfrak N}) >0$, \[\epsilon_\frakq(\bff)=\epsilon_{\ol{\frakq}}(\bff).\]
\end{itemize}
Assume that $\ell$ satisfies the following conditions\begin{enumerate}
\item \label{nonvanBessel(ia)}
          $\ell > 2k_1$ and $\ell \nmid 2N $; 
\item \label{nonvanBessel(ib)}
          the residual $\lam$-adic Galois representation $\bar{\rho}_{f_i, \lambda}: {\rm Gal}(\bar{\Q}/\Q) \to {\rm GL}_2(\bar{\mathbb F}_\ell)\ (i=1,2)$ is absolutely irreducible. 
\end{enumerate}
Then $\theta_{\bff}^*=\sum_{S\in\Lam_2}\bfa(S)q^S\not \con 0\pmod{\lam}$.  Moreover, for every imaginary quadratic field $K$ with \eqref{Heeg} and $(\ell,\Delta_K)=1$, there exist infinitely many $S\in\Lam_2$ such that $\Q(\sqrt{-\det S})=K$ and 
$\bfa(S)\not \con 0\pmod{\lam}$. \end{thm}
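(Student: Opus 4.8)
The first assertion follows from the \emph{moreover} clause, so I concentrate on the latter. Fix $K$ with \eqref{Heeg} and $(\ell,\Delta_K)=1$; since $F=\Q\oplus\Q$ one has $M=K$, the hypothesis \eqref{rFK} is vacuous, and \eqref{hC} for an integer $C$ reduces to $(C,N\Delta_K)=1$. By \lmref{L:BesselFC} it suffices to produce, for infinitely many integers $C$ prime to $2N\ell\Delta_K$, a ring class character $\phi\in\frakX_K^-$ of conductor $C\cO_K$ with
\[
\bfB^*_{\theta_\bff,S_\bfz,\phi}\not\equiv 0\pmod\lambda .
\]
Indeed, each such $C$ then yields an $S'\in\Lam_2$ with $\det S'=C^2\Delta_K/4$ and $\bfa(S')\not\equiv 0\pmod\lambda$; here $\Q(\sqrt{-\det S'})=K$, distinct choices of $C$ give distinct determinants, hence infinitely many such $S'$, and the non-vanishing of $\theta_\bff^*$ modulo $\lambda$ is then immediate.

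By the Bessel period formula \propref{Besselper}, $\bfB^*_{\theta_\bff,S_\bfz,\phi}=e(\bff,\phi)\cdot\Theta_C(\bff,\phi\circ\rmN_{E/K})$, so the task splits into showing that the local constant $e(\bff,\phi)$ of \eqref{E:localconstant} and the toric sum $\Theta_C(\bff,\phi\circ\rmN_{E/K})$ are both nonzero modulo $\lambda$ for a well-chosen $\phi$. For $e(\bff,\phi)$: in the split case every factor of \eqref{E:localconstant} has the shape $1+\epsilon_p(f_1)\epsilon_p(f_2)\,\phi(\rmN_{E/K}(\frakP))^{d_p}$ with $d_p=\ord_p(N_1)-\ord_p(N_2)$. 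When $d_p=0$ one necessarily has $\ord_p(N_1)=\ord_p(N_2)>0$, so hypothesis (LR) forces $\epsilon_p(f_1)=\epsilon_p(f_2)$ and the factor equals $2\in\cO_\lam^\times$ (as $\ell\neq2$). When $d_p\neq0$ the factor is $1+\zeta$ for a root of unity $\zeta$; choosing $\phi$ of odd prime order $q$ with $q$ large (in particular $q\neq\ell$ and $q\nmid\prod_p d_p$) makes $\zeta$ a primitive root of unity of odd order that is not a power of $\ell$, whence $1+\zeta$ is a $\lam$-unit. Such a $\phi$ exists once $C$ is large, since then $\Pic(\cO_{K,C})$ is large enough to carry a character of order $q$ nontrivial on the finitely many prescribed classes $\rmN_{E/K}(\frakP)$ (here $\frakP\mid p\mid N$, hence prime to the conductor); this is what ``$\phi$ sufficiently ramified'' means.

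For $\Theta_C(\bff,\phi\circ\rmN_{E/K})$: it coincides, up to an $\ell$-adic unit (which we arrange by imposing a further congruence condition on the prime factors of $C$), with the normalized toric integral $P(\bff,\phi\circ\rmN_{E/K},\varsigma^{(C)})$, and by \remref{Besselsplit}
\[
P(\bff,\phi\circ\rmN_{E/K},\varsigma^{(C)})=P(\bff_1,\phi,\varsigma^{(C)})\cdot P(\bff_2,\phi^{-1},\varsigma^{(C)}),
\]
a product of two toric period integrals (Gross-point sums) attached to $f_1\otimes\phi$ and $f_2\otimes\phi^{-1}$ over $K$. The simultaneous non-vanishing modulo $\lambda$ of the two factors, as $C$ runs over an infinite set of integers with prescribed splitting in $K$ and $\phi$ ranges over ring class characters of conductor $C\cO_K$, is furnished by the results of Chida and the first author \cite{ch15}: under the absolute irreducibility of $\bar{\rho}_{f_i,\lambda}$, together with the harmless hypotheses $\ell>2k_1$ and $\ell\nmid 2N$ from condition (i), each $P(\bff_i,\cdot,\varsigma^{(C)})$ is a $\lam$-unit for all but finitely many such $\phi$ in the relevant parametrization; one intersects the two exceptional sets and still retains infinitely many admissible pairs $(C,\phi)$, refining the choice further so that $\phi$ meets the order constraint from the local constant (which only forbids orders dividing a fixed integer). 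For any admissible $(C,\phi)$ the product $e(\bff,\phi)\Theta_C(\bff,\phi\circ\rmN_{E/K})$ is a $\lam$-unit, so $\bfB^*_{\theta_\bff,S_\bfz,\phi}\not\equiv 0\pmod\lambda$ and \lmref{L:BesselFC} finishes the argument.

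Since the substantive inputs — the Bessel period formula \propref{Besselper} and the passage from Bessel periods to Fourier coefficients \lmref{L:BesselFC} — are already in place, the work specific to this theorem is twofold: first, the bookkeeping that upgrades ``$e(\bff,\phi)\neq0$'' to ``$e(\bff,\phi)$ is a $\lam$-unit'' using (LR) and sufficient ramification; and second, correctly invoking \cite{ch15} to obtain \emph{simultaneous} mod-$\lambda$ non-vanishing of the two toric periods while keeping the ramification of $\phi$ compatible with \eqref{Heeg}, \eqref{hC} and the constraint just mentioned. I expect the latter, namely coordinating the several Chebotarev-type choices ($K$, $C$, and the order of $\phi$) with the hypotheses under which \cite{ch15} applies, to be the main obstacle.
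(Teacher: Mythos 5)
Your overall strategy is the same as the paper's: use \propref{Besselper} and \remref{Besselsplit} to factor the normalized Bessel period into $e(\bff,\phi)\cdot\Theta(\bff_1,\phi)\Theta(\bff_2,\phi^{-1})$, establish that each factor is a $\lambda$-unit for a suitable ring class character $\phi$, and then invoke \lmref{L:BesselFC}. The difference is in how you parametrize $\phi$, and this is where I see a genuine gap. The paper fixes a single auxiliary prime $p\nmid \ell N\Delta_K$ and works exclusively with $\phi\in\frakX_K^-$ of conductor $p^n\cO_K$; the Chida--Hsieh input \cite[Theorem 5.9]{ch15} then says that the two toric sums $\Theta_{p^n}(\bff_1,\phi)$ and $\Theta_{p^n}(\bff_2,\phi^{-1})$ are simultaneously $\lambda$-units for all but finitely many such $\phi$, and one argues that for $\phi$ sufficiently ramified (still of $p$-power conductor) the (LR) hypothesis forces $e(\bff,\phi)$ to be a $\lambda$-unit as well; both ``bad'' sets are finite inside a cofinite family, so a common good $\phi$ exists.

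You instead try to arrange the local constant to be a $\lambda$-unit by choosing $\phi$ of odd prime order $q$, with $q\neq\ell$ and $q\nmid\prod_p d_p$, and you let $C$ range over a flexible infinite family. This is where the argument breaks down. The Chida--Hsieh non-vanishing is proved for ring class characters of $p$-power conductor (a fixed anticyclotomic tower), not for characters of arbitrary conductor $C$, so your invocation of \cite{ch15} does not directly apply to your $\phi$'s. Moreover, within the $p$-power conductor family, a character of prime order $q\neq p$ factors through a quotient of bounded size (the prime-to-$p$ part of $\varprojlim_n\Pic(\cO_{K,p^n})$ is finite), so there are only finitely many such $\phi$ in the whole tower; there is no reason these finitely many should avoid the finite exceptional set of \cite{ch15}. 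Your ``intersect the two exceptional sets and still retain infinitely many admissible pairs $(C,\phi)$'' is therefore not justified. The fix is exactly the paper's device: do not constrain the order of $\phi$ at all, work with $\phi$ of $p$-power conductor, and observe that both the Chida--Hsieh good set and the set where $\phi(\rmN_{E/K}(\frakP))^{d_{p'}}\neq -\epsilon_{\frakp'}\epsilon_{\ol{\frakp'}}$ for all $p'\mid N$ with $d_{p'}\neq 0$ are cofinite in this one-parameter family (and the resulting root of unity then has order a positive power of $p$, so $1\pm\zeta$ is automatically a $\lambda$-unit since $p\neq\ell$, $\ell\neq 2$). Your careful case analysis of the factors of $e(\bff,\phi)$ using (LR) in the $d_p=0$ case is correct and is essentially the unstated content behind the paper's ``$e(\bff,\phi)\not\equiv 0\pmod\lambda$ for $\phi$ sufficiently ramified''; the problem is only in which family of $\phi$ you allow yourself to search.
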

\begin{proof}Let $K$ be as above.  We choose a prime $p\ndivides\ell N\Delta_K$ and let $\phi\in\frakX_K^-$ of conductor $p^n\cO_K$. Then $M=K$ satisfies \eqref{Heeg}, \eqref{rFK}, and $C=p^n$ satisfies \eqref{hC}. By \propref{Besselper} ($E=K\oplus K$) and  \remref{Besselsplit}, we see that $\bff=\bff_1\ot\bff_2$ and 
\begin{align*}
 \bfB^*_{\theta_\bff,S,\phi}=e({\mathbf f}, \phi) \cdot\Theta_{p^n}({\mathbf f}_1, \phi) \Theta_{p^n}({\mathbf f}_2, \phi^{-1}),   
\end{align*}
where 
\begin{align*}
       \Theta_{p^n}({\mathbf f}_i, \phi^\pm) 
  := &  \sum_{[t]\in K^\x\bksl \wh K^\x/\wh\cO_{K,p^n}^\x} \langle (X_iY_i)^{k_i} , {\mathbf f}_i(t\varsigma^{(p^n)}) \rangle_{2k_i} \cdot \phi^\pm(t).
\end{align*} 

Now under our assumptions, one can show that $\Theta_{p^n}({\mathbf f}_1,\phi)$ and $\Theta_{p^n}({\mathbf f}_2,\phi^{-1})$ are both nonzero modulo $\lambda$ for all but finitely many $\phi\in {\mathfrak X}_K^-$ of $p$-power conductor by the same arguments in \cite[Theorem 5.9]{ch15} (replace $F_\ell(g)$ by $F_\ell^0(g):=\pair{(X_iY_i)^{k_i}}{\bff_i(g)}$ in the proof). In addition, the condition (LR) implies that $e({\mathbf f}, \phi) \not\equiv 0 \ {\rm mod}\ \lambda$ as long as $\phi$ is sufficiently ramified.
Therefore, the theorem follows from \lmref{L:BesselFC} immediately.\end{proof}

\begin{thank}This work was done while the second author was a postdoctoral fellow in Taida Institute
for Mathematical Sciences and National Center for Theoretical Sciences. He would like
to thank for their supports and hospitalities. The authors are grateful to the referee for suggestions on the improvement of the paper.
\end{thank}

\bibliographystyle{amsalpha}
\bibliography{yoshidabib}

\end{document}